\documentclass[11pt,a4paper,leqno]{article}   % leqno = equation numbers on the left (common in geometry journals)

\usepackage[utf8]{inputenc}
\usepackage[T1]{fontenc}
\usepackage{amsmath,amssymb,amsthm}
\numberwithin{equation}{section} %number equations within sections
\usepackage{mathtools}        % fixes and extends amsmath
\usepackage{mathrsfs}         % \mathscr
\usepackage{bm}               % bold math
\usepackage{geometry}
\usepackage{hyperref}
\hypersetup{
    colorlinks=true,
    linkcolor=blue,
    citecolor=blue,
    urlcolor=blue,
    pdftitle={Spectral bounds}, % Title shown in PDF viewers
    pdfauthor={Fei He}                                          % Author metadata for the PDF
}
\usepackage{cleveref}         % \cref, \Cref
\usepackage{enumitem}         % custom lists
\usepackage{graphicx}         % if you need figures
\theoremstyle{plain}
\newtheorem{theorem}{Theorem}[section]   % Theorem numbers include section: Theorem 1.1, 2.1, etc.
\newtheorem{lemma}[theorem]{Lemma}       % Lemma shares counter with Theorem
\newtheorem{proposition}[theorem]{Proposition}
\newtheorem{corollary}[theorem]{Corollary}

\theoremstyle{definition}

\theoremstyle{remark}
\newtheorem{remark}[theorem]{Remark}

\DeclareMathOperator{\Rm}{Rm}
\DeclareMathOperator{\Ric}{Ric}
\DeclareMathOperator{\Sc}{S}
\DeclareMathOperator{\tr}{tr}

\DeclareMathOperator{\divg}{div}

\DeclareMathOperator{\Vol}{Vol}
\DeclareMathOperator{\End}{End}
\DeclareMathOperator{\rank}{rank}
\DeclareMathOperator{\dist}{dist}
\DeclareMathOperator{\supp}{supp}

\DeclareMathOperator{\Ker}{Ker}

\newcommand{\dd}{\mathrm{d}}   % upright d for exterior derivative (standard in geometry)
\newcommand{\D}{\nabla}
\newcommand{\pd}{\partial}      % quick way to type \partial

\title{
    \bfseries
    Spectral bounds for $f$-Laplace-type operators with applications to Betti numbers on gradient Ricci shrinkers\\[4pt]
}
\author{Fei He\thanks{Partially supported by Xiamen Municipal Natural Science Foundation (No. 3502Z202673005) and NSFC (No. 12141101).}}
\date{}
\newcommand{\contactinfo}{%
    \par\vspace{\baselineskip}%
    \noindent 
    School of Mathematical Science, Xiamen University, Xiamen, China.\\
    E-mail address:    \texttt{hefei@xmu.edu.cn}
}

\begin{document}

\maketitle

\begin{abstract}
We study the spectrum of the $f$-Laplacian on complete gradient Ricci shrinkers.  Upper and lower bounds for the $k$-th eigenvalue 
are established in terms of the volume growth rate.  Both bounds are shown to be sharp in the exponent. The method extends to $f$-Laplace-type operators on vector bundles; as an application we obtain explicit upper bounds for the Betti numbers.
\end{abstract}

% =====================================================
\section{Introduction}
\label{sec:intro}

A gradient Ricci shrinker is a complete Riemannian manifold $(M^n,g)$ equipped with a smooth potential function $f$ satisfying
\begin{equation}\label{eq:intro-shrinker}
\Ric+\nabla^2 f=\tfrac12 g .
\end{equation}
Ricci shrinkers model finite-time singularities of the Ricci flow and have been studied intensively over the past two decades.  By the fundamental work of
Cao--Zhou~\cite{CaoZhou}, $f$ is proper and grows quadratically in the geodesic distance.

On a shrinker the natural Laplace-type operator is the $f$-Laplacian
\[
\Delta_f u:=\Delta u-\langle\nabla f,\nabla u\rangle,
\]
which is self-adjoint and non-positive on the weighted space $L^2(e^{-f}dv)$. Hein--Naber~\cite{HeinNaber} and Cheng--Zhou~\cite{ChengZhou} proved that $\Delta_f$ has discrete spectrum and that the first non-zero eigenvalue satisfies $\lambda_1(-\Delta_f) \geq \tfrac{1}{2}$, with equality only when $(M,g)$ splits isometrically.  On K\"ahler Ricci shrinkers the spectral data of $\Delta_f$ are closely related to holomorphic functions of polynomial growth~\cite{HeOuDimension}.

Estimates for higher eigenvalues of the $f$-Laplacian on general Ricci shrinkers, however, remain absent from the literature.  The difficulty is that the
manifold is non-compact with possibly unbounded curvature, so classical techniques for compact manifolds or for manifolds with bounded geometry
do not apply directly.  The present paper fills this gap. We prove an upper bound for $\lambda_k(-\Delta_f)$ and a lower bound for the eigenvalues of a larger class of $f$-Laplace-type operators acting on sections of vector bundles, both are sharp in the exponent. As an application we obtain explicit bounds for Betti numbers using the Hodge isomorphism theorem in \cite{HeL2}.

Throughout the article we assume $n\ge 3$ and denote the sublevel sets of $f$ by
\[
\Omega_\lambda:=\{x\in M:f(x)\le\lambda\}.
\]
This is called the classically allowed region in the language of semiclassical analysis. Our first main result gives an upper bound for $\lambda_k(-\Delta_f)$ in terms of the volume growth of $\Omega_\lambda$.

\begin{theorem}[Upper bound for eigenvalues]\label{thm:intro-upper}
Let $(M^n,g,f)$ be a complete gradient Ricci shrinker.  Assume there exist $c_0>0$ and $\alpha\in[1,n]$ such that
\[
\Vol_g(\Omega_\lambda)\ge c_0\,\lambda^{\alpha/2},\qquad \lambda\ge 1+\frac n2.
\]
Then for all $k\ge 1$,
\[
\lambda_k(-\Delta_f)\le C(n,c_0)\,k^{\frac{2}{n+\alpha}} .
\]
\end{theorem}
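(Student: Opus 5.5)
The plan is to use the min-max principle for $-\Delta_f$ on $L^2(e^{-f}dv)$. It suffices to produce, for every $k$, a $(k+1)$-dimensional space of Lipschitz test functions whose weighted Rayleigh quotients are all at most $Ck^{2/(n+\alpha)}$. I will choose functions with pairwise disjoint supports, because then the Rayleigh quotient of any linear combination is bounded by the maximum of the individual quotients. The supports will sit inside a sublevel set $\Omega_\lambda$, with $\lambda$ tuned to $k$.

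\textbf{Step 1: geometric facts on $\Omega_\lambda$.} I will use the standard shrinker identities, normalized so that $\Sc+|\nabla f|^2=f$ and $\Sc\ge 0$. These give $|\nabla f|^2\le f$, so $\sqrt f$ is $\tfrac12$-Lipschitz. Cao--Zhou then give the two-sided comparison $\tfrac14(r-c)^2\le f\le \tfrac14(r+c)^2$ with $c=c(n)$, and the volume upper bound $\Vol(\Omega_\lambda)\le C(n)\lambda^{n/2}$.

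On $\Omega_{2\lambda}$ the weight satisfies $e^{-f}\ge e^{-2\lambda}$. A uniform weight ratio would be too lossy, so instead I will localize on the shell $A=\Omega_{\lambda}\setminus\Omega_{\lambda-\sqrt\lambda}$, where $f$ varies by only $O(\sqrt\lambda)$. On balls of radius $O(1/\sqrt\lambda)$ the weight $e^{-f}$ changes by a bounded factor, since $|\nabla f|\le\sqrt{2\lambda}$ there.

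\textbf{Step 2: choose scales and pack balls.} I will set $\rho=\epsilon\lambda^{-1/2}$. This is the natural length scale: the drift term $\langle\nabla f,\nabla u\rangle$ is comparable to $\Delta u$ exactly at length $1/|\nabla f|$. A cutoff of radius $\rho$ then has weighted Rayleigh quotient $\lesssim\rho^{-2}\asymp\lambda$, because $e^{-f}$ is comparable to a constant on the ball.

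Next I need many disjoint balls of radius $\rho$ in $\Omega_\lambda$. If balls of radius $\rho$ had volume at least $c\rho^n$, a maximal $2\rho$-separated set would give $N\gtrsim \Vol(\Omega_\lambda)\rho^{-n}\gtrsim\lambda^{\alpha/2}\lambda^{n/2}=\lambda^{(n+\alpha)/2}$ balls. Setting $N=k+1$ gives $\lambda\asymp k^{2/(n+\alpha)}$, which is exactly the claimed bound.

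\textbf{Step 3, the main obstacle: small-ball volume.} Curvature may be unbounded, so I cannot assume $\Vol B(x,\rho)\ge c\rho^n$. The remedy is to avoid any noncollapsing input and count differently. I will cover $\Omega_\lambda$ by a maximal family of disjoint balls $B(x_i,\rho)$, so that the balls $B(x_i,2\rho)$ cover $\Omega_\lambda$. For each $i$ I take the test function $u_i=(1-d(x_i,\cdot)/\rho)_+$. Its Rayleigh quotient is
\[
\frac{\int|\nabla u_i|^2e^{-f}}{\int u_i^2e^{-f}}\le \frac{\rho^{-2}\Vol B(x_i,\rho)}{\tfrac14\Vol B(x_i,\rho/2)}\cdot C.
\]
This ratio is controlled by a local volume doubling estimate at scale $\rho\le \lambda^{-1/2}$. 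Such an estimate holds because $\Ric_f\ge\tfrac12$ and $|\nabla f|\le\sqrt{2\lambda}$ on the region, so weighted Bishop--Gromov (Wei--Wylie type) yields doubling with a constant depending only on $n$ at scales $\rho\lesssim 1/\sup|\nabla f|$.

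The same doubling bounds the number of balls. Iterating the comparison lets me write $\sum_i\Vol B(x_i,2\rho)\ge\Vol(\Omega_\lambda)$. For the required count $N\gtrsim\lambda^{(n+\alpha)/2}$ I additionally need $\Vol B(x_i,2\rho)\lesssim\rho^n$, which follows from relative volume comparison starting at scale $0$, where the Euclidean ratio is exact. That is a pure upper bound, valid by the weighted Bishop--Gromov upper bound, with constants depending on $n$ at scale $\rho\sqrt\lambda\le1$.

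Hence $N\ge c(n)c_0\lambda^{(n+\alpha)/2}$. The restriction $\alpha\ge1$ and the lower bound on $\lambda$ will only be used to make these constants uniform and to keep $\Omega_\lambda$ nonempty and large. For small $k$, I will use $\lambda\ge 1+n/2$ together with monotonicity of $\lambda_k$ in $k$.
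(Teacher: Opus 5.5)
Your argument is correct and has the same skeleton as the paper's proof. Both use $k+1$ pairwise disjoint balls of radius $\sim\lambda^{-1/2}$ centred in $\Omega_\lambda$, with tent functions $(1-d(x_i,\cdot)/\rho)_+$ whose weighted Rayleigh quotients are at most $\rho^{-2}$ times a local doubling constant. The balls are counted from a covering of $\Omega_\lambda$ together with a small-ball bound $\Vol_g B(x,2\rho)\le C(n)\rho^n$. Then hypothesis (H1), the choice $\lambda\sim k^{2/(n+\alpha)}$, and min-max finish the argument.

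The genuine difference is where the two local estimates come from.
\begin{itemize}
\item The paper uses the Li--Li--Wang conformal metric $\bar g_q=e^{-\frac{2}{n-2}(f-f(q))}g$. For it the shrinker equation gives $\overline{\Ric}\ge -C_1(n)\lambda$ on $B_g(q,1/\sqrt\lambda)$, and the paper then applies classical Bishop--Gromov to $\bar g_q$ (Lemmas~\ref{lem:doubling} and~\ref{lem:volupper}).
\item You apply the Wei--Wylie weighted comparison directly to $(g,e^{-f}dv)$, using $\Ric_f=\frac12 g\ge 0$ and $|\nabla f|\le\sqrt f\le\sqrt\lambda+\frac{1}{2\sqrt\lambda}$ on the ball. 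The factor $e^{aR}$ is then bounded at radii $R\lesssim\lambda^{-1/2}$. This is legitimate, because that comparison only needs $\partial_r f\ge -a$ along minimal geodesics from the centre inside the ball. Your small-ball upper bound also follows, since $\mu(B(x,r))/(\omega_n r^n)\to e^{-f(x)}$ as $r\to 0$.
\end{itemize}
Your route is more direct and does not need $n\ge 3$. It uses only $\Ric_f\ge 0$ plus the gradient bound, not the exact cancellation that makes $\overline{\Ric}$ depend on $f$ alone; compare the Remark after Lemma~\ref{lem:Ricci-bound}. The conformal route's advantage is that it yields an honest Riemannian Ricci lower bound. The Li--Schoen/Saloff-Coste local Sobolev inequality (Proposition~\ref{prop:Sobolev}) then follows from classical results, and the paper needs this for the Cheng--Li heat-kernel step in the counting estimate. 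So one device serves both main theorems.

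A few cosmetic points:
\begin{itemize}
\item The shell $\Omega_\lambda\setminus\Omega_{\lambda-\sqrt\lambda}$ in your Step 1 plays no role and should be dropped. Across it $e^{-f}$ varies by $e^{O(\sqrt\lambda)}$, and (H1) controls $\Vol_g(\Omega_\lambda)$, not the shell. What you actually use is that $f$ varies by $O(1)$ on balls of radius $O(\lambda^{-1/2})$ centred in $\Omega_\lambda$ (Lemma~\ref{lem:f-control}).
\item The inequality $\sum_i\Vol_g B(x_i,2\rho)\ge\Vol_g(\Omega_\lambda)$ is just the covering property of a maximal disjoint family; no iteration of the comparison is involved.
\item Small $k$ is handled most cleanly by taking $\lambda=\max\bigl\{1+\frac n2,\bigl(\frac{k+1}{c\,c_0}\bigr)^{2/(n+\alpha)}\bigr\}$ directly.
\end{itemize}
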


By Cao--Zhou~\cite{CaoZhou} and Haslhofer--M\"uller~\cite{HaslhoferMuller} we always have $\Vol_g(\Omega_\lambda)\le C(n)\,\lambda^{n/2}$, and by
Munteanu--Wang~\cite{MunteanuWang2012} we have the lower bound $\Vol_g(\Omega_\lambda)\ge C(n)\,\lambda^{1/2}$.  Thus the parameter range
$\alpha\in[1,n]$ is natural; the hypothesis always holds with $\alpha=1$. (The condition $\lambda\ge 1+\frac n2$ guarantees
$\Omega_\lambda\neq\emptyset$ by~\eqref{eq:minf}.)

To obtain lower bounds for the eigenvalues---and to reach the topological applications described below---we work in greater generality.  Let $E\to M$ be a Hermitian vector bundle with a compatible connection $\nabla$, and let $A$ be a symmetric endomorphism of $E$.  Consider the operator
\[
L:=-\Delta+\nabla_{\nabla f}+A .
\]
$L$ is self-adjoint on $L^2(e^{-f}dv,\Gamma(E))$, and its spectrum is discrete~\cite{HeOu} provided $A$ is bounded from below.  We denote its
eigenvalues by $\lambda_1(L)\le\lambda_2(L)\le\cdots\to+\infty$ and its spectral counting function by $\mathcal{N}_L(\lambda):=\#\{k:\lambda_k(L)\le\lambda\}$.

\begin{theorem}[Counting function estimate]\label{thm:intro-counting}
Let $(M^n,g,f)$ be a complete gradient Ricci shrinker, let $E\to M$ be a Hermitian vector bundle with compatible connection, and let $L=-\Delta+\nabla_{\nabla f}+A$ where $\langle A(u),u\rangle\ge -C_A|u|^2$ for some $C_A\ge0$.  Assume there exists $\beta\in[1,n]$ such that
\[
\Vol_g(\Omega_\lambda)\le C(n)\,\lambda^{\beta/2},\qquad \lambda>0.
\]
Then
\[
\mathcal{N}_L(\lambda)\le C\,\rank(E)\,(1+\lambda+C_A)^{\frac{n+\beta}{2}},
\qquad\lambda\ge0,
\]
where $C$ depends only on $n$. 
\end{theorem}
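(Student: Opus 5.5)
The plan is to reduce to a Schrödinger operator on an unweighted space and then use a Dirichlet--Neumann bracketing, or a Cwikel--Lieb--Rozenblum-type count, on the classically allowed region.

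\textbf{Step 1: unitary conjugation.} Conjugate by $u=e^{f/2}v$, which maps $L^2(e^{-f}dv,\Gamma(E))$ unitarily onto $L^2(dv,\Gamma(E))$. A direct computation gives
\[
e^{-f/2}\,L\,e^{f/2}=-\Delta+\tfrac14|\nabla f|^2-\tfrac12\Delta f+A .
\]
On a shrinker we have the identities $\Sc+\Delta f=\frac n2$ and $\Sc+|\nabla f|^2=f$ (after normalization), together with $\Sc\ge0$. These give $\frac14|\nabla f|^2-\frac12\Delta f=\frac14 f+\frac34\Sc-\frac n4\ge \frac14 f-\frac n4$. Kato's inequality $|\nabla|v||\le|\nabla v|$ then reduces the quadratic form to the scalar Schrödinger operator $H=-\Delta+V$ with $V=\frac14 f-\frac n4-C_A$. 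The rank factor must be kept, so I would compare componentwise instead: use $\mathcal N_L(\lambda)\le\rank(E)\cdot\mathcal N$ of the scalar problem. This follows from diamagnetic/domination arguments, or more robustly by running Step 2 directly on the bundle, where every local eigenvalue count simply acquires a factor $\rank(E)$.

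\textbf{Step 2: splitting the manifold.} Set $R=4(\lambda+C_A)+n+4$. Outside $\Omega_R$ the potential satisfies $V\ge\lambda+1$, so by min-max $\mathcal N(\lambda)$ is bounded by the number of eigenvalues $\le\lambda$ of the operator restricted to $\Omega_{2R}$, up to a cutoff error. To control that error I would use an IMS localization with cutoffs $\phi_1^2+\phi_2^2=1$, where $\phi_1$ is supported in $\Omega_{2R}$ and $\phi_2$ in $M\setminus\Omega_R$. The error $|\nabla\phi_i|^2$ is $O(1/R)$ because $|\nabla f|\le\sqrt f$ and $f$ varies over a range $\sim R$. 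Hence $\mathcal N_L(\lambda)\le\mathcal N\bigl(-\Delta^{\text{Neu}}_{\Omega_{2R}}\le\lambda+C_A+C\bigr)$ times $\rank(E)$.

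\textbf{Step 3: counting on $\Omega_{2R}$ (the main obstacle).} We must count the Neumann-type eigenvalues of $-\Delta$ on $\Omega_{2R}$ below $\mu\sim\lambda+C_A+1$ and obtain $\lesssim \mu^{n/2}\Vol(\Omega_{2R})\lesssim\mu^{n/2}R^{\beta/2}\sim(1+\lambda+C_A)^{\frac{n+\beta}2}$. Curvature is unbounded, so Weyl asymptotics are unavailable. My plan is to cover $\Omega_{2R}$ by balls of radius $r=\mu^{-1/2}$ and use a local Sobolev/Poincaré inequality at that scale. Such an inequality holds on shrinkers with constants depending only on $n$ via the log-Sobolev inequality of Li--Wang, or via the Bakry--Émery $\Ric_f=\frac12 g$ bound, which gives a uniform local Sobolev on $f$-weighted balls where $e^{-f}$ is comparable on balls of radius $\le 1/\sqrt f$. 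Functions orthogonal to the local averages on each ball of a bounded-overlap cover then have Rayleigh quotient $\gtrsim r^{-2}$. This yields $\mathcal N\lesssim\#\text{balls}$.

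The number of balls is bounded by $\Vol(\Omega_{3R})/\min\Vol(B(x,r))$, and I need a noncollapsing lower bound $\Vol B(x,r)\ge c(n)r^n$. This noncollapsing is the delicate point. For $r\le R^{-1/2}$ it should follow from the Sobolev/log-Sobolev inequality (Li--Wang, with the $\mu$-entropy lower bound). If it fails, I would instead use the heat-kernel route: $\mathcal N(\lambda)\le e^{t\lambda}\int_{\Omega}\tr H_t(x,x)$, with an on-diagonal bound $H_t(x,x)\le C t^{-n/2}$ from the Sobolev inequality and $t=1/\mu$. This gives the same count, $\mu^{n/2}\Vol(\Omega_{2R})$, after invoking the volume hypothesis.
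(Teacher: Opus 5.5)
Your Step~2 is a genuinely different and valid localization, but your handling of the bundle in Steps~1 and~3 has a real gap.

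\textbf{The bundle.} The reduction $\mathcal N_L(\lambda)\le\rank(E)\,\mathcal N_{\mathrm{scalar}}(\lambda)$ does not follow from Kato's inequality or semigroup domination, and it is false in general. Domination $|e^{-tL}u|\le e^{-tH}|u|$ compares heat kernels, hence heat traces, but not counting functions. On the unit round $S^2$, the Bochner Laplacian on $1$-forms is the Hodge Laplacian minus $1$, and it has the eigenvalue $1$ with multiplicity $6$. Meanwhile $-\Delta$ on functions has only the eigenvalue $0$ below $2$, so $\mathcal N_{\nabla^*\nabla}(1)=6>2=\rank(T^*S^2)\,\mathcal N_{-\Delta}(1)$. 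Your Step~3 count does not transfer to sections either. A local average of a section needs a trivialization, and in that trivialization the connection form is uncontrolled, since nothing is assumed about the curvature of $E$. So the scalar Poincar\'e inequality does not pass to components. Kato gives it only for $|u|$, which is nonlinear and useless for a codimension argument. For the theorem as stated you are therefore forced onto your heat-kernel route, using domination only where it is legitimate: $\tr_E H(x,x,t)\le\rank(E)\,p(x,x,t)$ for the Dirichlet kernels, and then $\mathcal N\le e^{t\mu}\int\tr_E H(x,x,t)\,dv$ with $t\sim\mu^{-1}$. This is exactly the paper's Cheng--Li plus Hess--Schrader--Uhlenbrock step.

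That route needs two further corrections.
\begin{itemize}
\item It is not a way around noncollapsing. The local Sobolev constant is $Cr^2/\Vol(B)^{2/n}$, so $H_t(x,x)\le Ct^{-n/2}$ uses $\Vol B(x,\sqrt t)\ge c\,t^{n/2}$. The constant actually available is $c(n)e^{\bm{\mu}}$ (Lemma~\ref{lem:vollower}), not $c(n)$; this is the same $v$ the paper feeds into \eqref{eq:coverNumber}.
\item Proposition~\ref{prop:Sobolev} holds only on balls of radius $\lesssim R^{-1/2}$ centered in $\Omega_{2R}$, so Cheng--Li's differential inequality cannot be run on the Dirichlet problem of all of $\Omega_{2R}$. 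You need either a local parabolic mean-value inequality at scale $\sqrt t$, or what the paper does: a second IMS localization $\sum_j\eta_j^2=1$ subordinate to balls $\hat B_j$ of radius $r_0\sim R^{-1/2}$, an injection into $\bigoplus_jH^1_0(\hat B_j)$ with energy $\le Ca\|\cdot\|^2$, and Cheng--Li on each ball.
\end{itemize}
Also drop ``Neumann''. Step~2 already gives compactly supported test sections, and no Poincar\'e inequality is available on $B_j\cap\Omega_{2R}$, whereas for sections extended by zero you can use full balls. For the trivial line bundle your Poincar\'e count does work, and it gives a heat-kernel-free proof of Corollary~\ref{cor:intro-lower}.

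\textbf{The localization.} The paper confines eigensections to $\Omega_R$, $R\sim a$, through the Agmon estimate of Lemma~\ref{lem:agmon-shrinker} and injectivity of restriction. Your two-piece IMS in the $f$-direction does this more cheaply. Take $\phi_1=\cos(\tfrac\pi2\psi(f))$ and $\phi_2=\sin(\tfrac\pi2\psi(f))$, with $\psi$ rising linearly from $0$ to $1$ on $[R,2R]$ and your choice $R=4(\lambda+C_A)+n+4$. Then $|\nabla f|^2\le f$ gives $W:=|\nabla\phi_1|^2+|\nabla\phi_2|^2\le\pi^2/(2R)<1$, so $V_f+A-W\ge\lambda+1-\pi^2/(2R)>\lambda$ on $\supp\phi_2$. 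Hence $\tilde u\mapsto\phi_1\tilde u$ is injective on $e^{-f/2}E_\lambda$, and $\int|\nabla(\phi_1\tilde u)|^2\,dv\le(\lambda+C_A+\tfrac n4+1)\|\phi_1\tilde u\|^2_{L^2(dv)}$.

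This route has several advantages:
\begin{itemize}
\item it needs no weighted regularization and no check that $\tilde u\in W^{1,2}(dv)$;
\item it uses only the lower bound on $V_f+A$;
\item it yields compactly supported sections with controlled Dirichlet energy, so the remaining count involves only $\nabla^*\nabla$ with Dirichlet conditions, and no upper bound on $V_f+A$ is ever needed;
\item the second partition-of-unity step becomes an exact isometry.
\end{itemize}
The Agmon route buys quantitative exponential decay, which the paper reuses for Theorem~\ref{thm:polynomial-growth}. A minor point: $V_f=\tfrac14(f+\Sc-n)$, so the coefficient of $\Sc$ is $\tfrac14$, not $\tfrac34$; this is harmless for your lower bound.
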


By \cite{CaoZhou} and \cite{HaslhoferMuller} the volume growth assumption always holds with $\beta = n$. This improves the counting function estimate in~\cite{HeOu}. It is well-known that upper bounds of the counting function are equivalent to lower bounds of eigenvalues. When applied to the trivial line bundle $E=M\times\mathbb{R}$ with $A=0$ (so that $L=-\Delta_f$), Theorem~\ref{thm:intro-counting} immediately yields a lower bound for the eigenvalues of the $f$-Laplacian.

\begin{corollary}[Lower bound for eigenvalues]\label{cor:intro-lower}
Let $(M^n,g,f)$ be a complete gradient Ricci shrinker.  Assume there exists $\beta\in[1,n]$ such that
\[
\Vol_g(\Omega_\lambda)\le C(n)\,\lambda^{\beta/2}, \qquad \lambda>0.
\]
Then for all $k\ge 1$,
\[
\lambda_k(-\Delta_f)\ge c(n)\,k^{\frac{2}{n+\beta}}.
\]
\end{corollary}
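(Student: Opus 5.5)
The corollary follows from Theorem~\ref{thm:intro-counting} by specialising to the trivial line bundle and inverting the counting-function bound. Take $E=M\times\mathbb{R}$ with the trivial flat connection and $A=0$. Then $\rank(E)=1$, $C_A=0$, and $L=-\Delta+\nabla_{\nabla f}=-\Delta_f$. The volume hypothesis is exactly that of Theorem~\ref{thm:intro-counting}, so
\[
\mathcal{N}_{-\Delta_f}(\lambda)\le C(n)\,(1+\lambda)^{\frac{n+\beta}{2}},\qquad \lambda\ge 0 .
\]

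Next I invert this bound. Fix $k\ge1$ and set $\lambda=\lambda_k(-\Delta_f)$, which is nonnegative. By definition of the counting function and the ordering of eigenvalues, $\mathcal{N}(\lambda_k)\ge k$. Hence $k\le C(n)(1+\lambda_k)^{\frac{n+\beta}{2}}$, which gives
\[
1+\lambda_k\ge C(n)^{-\frac{2}{n+\beta}}\,k^{\frac{2}{n+\beta}}\ge c_1(n)\,k^{\frac{2}{n+\beta}},
\]
uniformly in $\beta\in[1,n]$. One can take $c_1(n)=\min(1,C(n)^{-1})^{2}$, for instance, since $\frac{2}{n+\beta}\le 1$.

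The only real obstacle is the additive $1$: this gives $\lambda_k\ge c_1k^{2/(n+\beta)}-1$, which is useless for small $k$. To remove it, I use the spectral gap $\lambda_k\ge\lambda_1(-\Delta_f)\ge\frac12$ for $k\ge1$ (Hein--Naber, Cheng--Zhou). Here I index so that $\lambda_0=0$ corresponds to constants and $\lambda_k$ with $k\ge1$ are the nonzero eigenvalues. If instead the paper's indexing starts at $\lambda_1=0$, I would shift $k$ by one, which only changes constants. With $\lambda_k\ge\frac12$ we have $1+\lambda_k\le 3\lambda_k$. Therefore
\[
\lambda_k\ge \tfrac13(1+\lambda_k)\ge \tfrac{c_1(n)}{3}\,k^{\frac{2}{n+\beta}},
\]
which is the claim with $c(n)=c_1(n)/3$.
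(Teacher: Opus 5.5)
Your proposal is correct and follows the paper's proof: you specialize Theorem~\ref{thm:intro-counting} to the trivial line bundle with $A=0$, which gives $k\le C(n)(1+\lambda_k)^{\frac{n+\beta}{2}}$, and you absorb the additive $1$ using the spectral gap $\lambda_1(-\Delta_f)\ge\frac12$ of Hein--Naber. Your indexing remark is apt: since constants give the eigenvalue $0$, the statement only holds if $\lambda_k$, $k\ge1$, denotes the $k$-th nonzero eigenvalue, and the paper tacitly uses this convention here.
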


Both our upper and lower bounds are sharp in the exponent. The $f$-Laplacian is unitarily equivalent to a Schr\"odinger-type operator (Section~\ref{sec:agmon}); consequently, on Ricci shrinkers with bounded geometry---or satisfying the criteria of
Braverman--Dai--Yan~\cite{BravermanDaiYan}---the Weyl law of Dai--Yan~\cite{DaiYan} applies.  On the product shrinker $\mathbb{S}^{n-p}(r)\times\mathbb{R}^p$ with $r=\sqrt{2(n-p-1)}$ and $1\le p\le n-2$, this gives
$\lambda_k(-\Delta_f)\sim c(n,p)\,k^{2/(n+p)}$.  On the Gaussian shrinker
$(\mathbb{R}^n,g_{\mathrm{Eucl}},f=|x|^2/4)$, $\Delta_f$ is the Ornstein--Uhlenbeck operator, for which $\lambda_k\sim c(n)\,k^{1/n}$.  In both cases the exponents match Theorem~\ref{thm:intro-upper} (with $\alpha=p$ and $\alpha=n$) and Corollary~\ref{cor:intro-lower} (with $\beta=p$ and $\beta=n$).

The key ingredient in the proofs of both theorems is the local conformal transform of Li--Li--Wang~\cite{LiLiWang}, which originates in an idea of Zhang~\cite{ZZhang}. Near any point $q\in\Omega_\lambda$, the conformal metric $\bar g=e^{-\frac{2}{n-2}(f-f(q))}g$ has uniformly bounded geometry on balls of radius $\sim\lambda^{-1/2}$; from this we obtain uniform volume doubling and a local Sobolev inequality (Section~\ref{sec:conformal}).  For the upper bound we combine the local Sobolev inequality with the classical method of Li--Yau~\cite{LiYau}.  For the counting function estimate, we assemble several classical ideas.  First, we prove Agmon-type estimates showing that eigensections are exponentially localized to the classically allowed region. Then, an IMS-type localization argument reduces the problem to estimates of a packing number, and of local Dirichlet counting functions.  Finally, we use the Cheng--Li heat-kernel method~\cite{ChengLi}, powered by the local Sobolev inequality, to estimate the counting function for the Dirichlet eigenvalues of $L$ on balls of radius $\sim\lambda^{-1/2}$ inside the classically allowed region.

The Agmon estimate developed for Theorem~\ref{thm:intro-counting} also yields a pointwise polynomial growth bound for eigensections on Ricci shrinkers without additional curvature assumption; we include this for its independent interest.
\begin{theorem}[Polynomial growth of eigensections]
\label{thm:polynomial-growth}
Let $(M^n,g,f)$ be a complete gradient Ricci shrinker, let $O\in M$ be a minimum
point of $f$, and set $r(x)=\mathrm{dist}(x,O)$. Let $L=-\Delta+\nabla_{\nabla f}+A$
act on sections of a Hermitian vector bundle, where $\langle A(u),u\rangle\ge -C_A|u|^2$ for some $C_A\ge0$. Suppose
$u\in W^{1,2}(e^{-f}dv)$ satisfies $\langle u,Lu\rangle\le\lambda|u|^2$.
Then for any $\sigma>0$,
\[
|u|(x)\le C\, \|u\|_{L^2(\Omega_R; d v)} \, (1+r(x))^{\,n+2 C_A+2\lambda+2\sigma},\qquad\forall x\in M,
\]
where $C$ depends only on $n$, $\sigma$, $\lambda$, $C_A$ and Perelman's entropy
$\mu$, and $R$ depends only on $n, C_A, \lambda$ and $\sigma$.
\end{theorem}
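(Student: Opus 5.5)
The plan has two halves. First, an Agmon-type weighted $L^2$ estimate shows that $u$ has controlled $L^2(dv)$ growth outside a fixed sublevel set $\Omega_R$. Second, a local mean value inequality, obtained through the conformal change of Section~\ref{sec:conformal}, turns this integral control into the pointwise bound.

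\textbf{Step 1: weighted energy identity.} Set $w=e^{-f/2}u$. This is the unitary equivalence with a Schr\"odinger-type operator from Section~\ref{sec:agmon}. Then $w\in W^{1,2}(dv)$, and $w$ satisfies a subsolution inequality for
\[
H=-\Delta+V+A,\qquad V=\tfrac14|\nabla f|^2-\tfrac12\Delta f .
\]
By the shrinker identities $\Sc+\Delta f=\tfrac n2$ and $\Sc+|\nabla f|^2=f$, together with $\Sc\ge0$, we get
\[
V=\tfrac14 f-\tfrac n4+\tfrac12\Sc-\tfrac14\Sc\ge \tfrac14(f-n).
\]
So $V$ grows like $f/4$.

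\textbf{Step 2: Agmon estimate with a polynomial weight.} Take $\phi=\tfrac12\gamma\log(1+f)$, so that
\[
|\nabla\phi|^2=\frac{\gamma^2|\nabla f|^2}{4(1+f)^2}\le \frac{\gamma^2}{4(1+f)}\to0 .
\]
Test the inequality $\langle w,Hw\rangle\le\lambda|w|^2$ against $e^{2\phi}\eta^2 w$, with $\eta$ a cutoff that is later removed. Integration by parts gives
\[
\int|\nabla(e^\phi\eta w)|^2+\int(V-C_A-\lambda-|\nabla\phi|^2)e^{2\phi}\eta^2|w|^2\le \int|\nabla\eta|^2e^{2\phi}|w|^2 .
\]
Outside $\Omega_R$, with $R=R(n,C_A,\lambda,\sigma)$ large, the potential term dominates. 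This yields
\[
\int_{M\setminus\Omega_R}(1+f)^{\gamma+1}e^{-f}|u|^2\le C\int_{\Omega_R}|u|^2dv .
\]
That alone is only a weighted bound and is too weak. The better route is to choose the weight as $e^{\phi}$ with $\phi=f/2-\tfrac{\gamma}{2}\log(1+f)$, i.e.\ to work with $u$ itself rather than $w$. For $u$ in unweighted $L^2$, the drift term $\nabla_{\nabla f}$ together with the $A$-lower bound produces, via $\tfrac12\Delta_f f=\tfrac n4-\tfrac f2$, a differential inequality for the annular integrals $I(s)=\int_{\{f\le s\}}|u|^2dv$. The lower-order terms give an exponent bookkeeping of roughly
\[
I(s)\lesssim s^{\,n/2+C_A+\lambda+\sigma}\,I(R).
\]

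The main obstacle is exactly this bookkeeping. One must arrange the multiplier so that the exponent comes out as $2(C_A+\lambda+\sigma)$ plus a volume contribution $n$. The volume contribution uses $\Vol(\Omega_s)\le Cs^{n/2}$ and $f\sim r^2/4$, from Cao--Zhou and Haslhofer--M\"uller.

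\textbf{Step 3: local mean value inequality.} Near $x$ with $f(x)\approx s$, use the conformal metric $\bar g$ on a ball of radius $\sim s^{-1/2}$. By Section~\ref{sec:conformal}, it has a uniform local Sobolev inequality, with constant depending on $n$ and $\mu$. Kato's inequality gives that $|u|$ satisfies $\Delta_f|u|\ge-(C_A+\lambda)|u|$. On this ball the drift is $|\nabla f|\lesssim s^{1/2}$, which is controlled after rescaling. Moser iteration then gives
\[
|u|^2(x)\le C\,s^{n/2}\int_{B(x,cs^{-1/2})}|u|^2dv .
\]
Combining with Step 2 and $s\sim r^2$ gives a power of $r$ with exponent
\[
n+2C_A+2\lambda+2\sigma ,
\]
after absorbing $\sigma$-slack. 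I expect some care to be needed to get exactly this exponent; the $\sigma$ slack is what absorbs the remaining logarithmic losses.
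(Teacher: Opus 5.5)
Your Steps 1 and 3 are sound and essentially match the paper. You conjugate to $w=e^{-f/2}u$ with $V=\frac14(f+\Sc-n)\ge\frac14(f-n)$. You then use Kato's inequality and the local mean-value inequality on $B_g(x,cf(x)^{-1/2})\subset\Omega_{f(x)+2}$ from the conformal Sobolev inequality. Finally you use the volume lower bound $\Vol_g(B)\ge c(n)e^{\bm{\mu}}f(x)^{-n/2}$, which is where the entropy enters.

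The gap is Step 2, the heart of the proof, which you never carry out. You do find the right weight, $e^{\phi}$ with $\phi=\frac f2-q\log(1+f)$, so that $e^{\phi}w=(1+f)^{-q}u$. But you then abandon the Agmon identity you had just written in favour of an unspecified ``differential inequality for $I(s)$''. Inserting this $\phi$ into your own identity is all that is needed. Since $|\nabla\phi|^2\le(\frac12-\frac{q}{1+f})^2f$,
\[
V-C_A-\lambda-|\nabla\phi|^2\ \ge\ \frac{qf(1+f-q)}{(1+f)^2}-b\ \longrightarrow\ q-b,\qquad b=\tfrac n4+C_A+\lambda .
\]
Choosing $q=b+\sigma$ makes the effective potential $\ge\sigma/2$ on $\{f\ge R\}$ with $R=R(n,C_A,\lambda,\sigma)$. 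This yields $\int_{\{f>R\}}(1+f)^{-2q}|u|^2dv\le C\int_{\{f\le R\}}|u|^2dv$.

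You also cannot simply let the cutoff $\eta\to1$. The error term $\int|\nabla\eta|^2(1+f)^{-2q}|u|^2dv$ is not known to be small, since a priori $u$ is only in $L^2(e^{-f}dv)$. The paper handles this by putting the bounded function $f_\theta=f/(1+\theta f)$ into the weight (note $|\nabla f_\theta|\le|\nabla f|$) and letting $\theta\to0$ with Fatou's lemma.

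The bookkeeping you do write down is wrong, not just rough. The bound $I(s)\lesssim s^{n/2+C_A+\lambda+\sigma}I(R)$ fails on the Gaussian shrinker. There a degree-$d$ Hermite polynomial has $\lambda=d/2$ and $|u|\sim r^{2\lambda}$, so $I(s)\sim s^{n/2+2\lambda}$.

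What the Agmon estimate actually gives is $I(s)\lesssim s^{2q}=s^{n/2+2C_A+2\lambda+2\sigma}$. The $n/2$ there comes from the $-\frac n4$ in $V\ge\frac14(f-n)$; the global bound $\Vol_g(\Omega_s)\le Cs^{n/2}$ plays no role. Combined with $|u|^2(x)\le Cs^{n/2}\int_B|u|^2dv$, this gives $|u|(x)\lesssim f(x)^{n/2+C_A+\lambda+\sigma}\lesssim(1+r(x))^{n+2C_A+2\lambda+2\sigma}$ by Cao--Zhou.

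By contrast, your stated $I(s)$ bound together with your Step 3 would give the exponent $n+C_A+\lambda+\sigma$. That contradicts your own conclusion, and by the same Hermite example it is false for $\lambda>n+\sigma$; no ``$\sigma$-slack'' reconciles the two. As written, the final exponent is asserted rather than derived.
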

Colding--Minicozzi~\cite{ColdingMinicozzi} proved a sharp polynomial growth bound for the weighted level-set integral $r^{1-n}\int_{b=r}|u|^2|\nabla b|$ with $b=2\sqrt{f}$. Since $|\D b|^2 = 1 - S/f$, if the scalar curvature satisfies $\Sc < (1-\epsilon) f$, and if there is a uniform mean-value inequality, this can be turned into a pointwise estimate with sharp exponent. Theorem~\ref{thm:polynomial-growth} is complementary: it requires no curvature hypothesis, at the cost of sharpness in the exponent.

The main motivation for working with operators on vector bundles is that it covers some geometrically significant cases. The first instance is the $f$-Hodge Laplacian
\[
\Delta_f^{\dd}\omega = -\Delta\omega + \nabla_{\nabla f}\omega +\mathcal{R}_f(\omega),
\]
acting on $p$-forms, where $\mathcal{R}_f$ is the weighted curvature term \cite[Lemma~2.2]{HeL2}.  Under the assumption $|\Ric|\le\frac{1}{5n}f$ at infinity, the Hodge isomorphism theorem of~\cite[Theorem~1.4]{HeL2} identifies the space of $L^2(e^{-f}dv)$ $f$-harmonic $p$-forms with the de~Rham cohomology group $H^p_{\mathrm{dR}}(M)$, so that $\dim\Ker(\Delta_f^{\dd}) = b_p(M)$.  Applying Theorem~\ref{thm:intro-counting} to $L=\Delta_f^{\dd}$ with $\lambda=0$ yields the following.

\begin{corollary}[Betti number estimates]\label{cor:intro-betti}
Let $(M^n,g,f)$ be a complete gradient Ricci shrinker.  Suppose $|\Ric|\le\frac{1}{5n}f$ when $f$ is sufficiently large, and
$\langle\mathcal{R}_f(\omega),\omega\rangle\ge -K|\omega|^2$ for every $p$-form $\omega$.  Assume also there exists $\beta\in[1,n]$ such that
\[
\Vol_g(\Omega_\lambda)\le C(n)\,\lambda^{\beta/2},\qquad \lambda>0.
\]
Then
\[
b_p(M) \le C(n)\binom{n}{p}(1+K)^{\frac{n+\beta}{2}} .
\]
\end{corollary}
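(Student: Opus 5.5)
The plan is to apply Theorem~\ref{thm:intro-counting} to the bundle $E=\Lambda^pT^*M$ with the metric and Levi-Civita connection induced by $g$. Take $L=\Delta_f^{\dd}=-\Delta+\nabla_{\nabla f}+\mathcal{R}_f$, so that $A=\mathcal{R}_f$. This $A$ is a symmetric endomorphism of $\Lambda^pT^*M$; it is built from curvature and $\nabla^2 f$ as a Weitzenböck-type term, as recorded in \cite[Lemma~2.2]{HeL2}. The hypothesis $\langle\mathcal{R}_f(\omega),\omega\rangle\ge -K|\omega|^2$ is exactly the lower bound required in Theorem~\ref{thm:intro-counting}, with $C_A=K$. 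It also gives, via \cite{HeOu}, discreteness of the spectrum of $L$ on $L^2(e^{-f}dv,\Lambda^p)$. The rank is $\rank(E)=\binom{n}{p}$, and the volume growth hypothesis is the one assumed in Theorem~\ref{thm:intro-counting}.

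Setting $\lambda=0$ in Theorem~\ref{thm:intro-counting} gives
\[
\mathcal{N}_L(0)\le C(n)\binom{n}{p}(1+K)^{\frac{n+\beta}{2}}.
\]
It remains to connect $\mathcal{N}_L(0)$ to $b_p(M)$. The operator $L$ is nonnegative, since $\langle L\omega,\omega\rangle_{L^2(e^{-f})}=\|\dd\omega\|^2+\|\delta_f\omega\|^2\ge 0$ for the weighted codifferential $\delta_f$. Hence $\mathcal{N}_L(0)=\dim\Ker L$, the dimension of the space of $L^2(e^{-f}dv)$ $f$-harmonic $p$-forms. Even without this positivity, the kernel is contained in the eigenspaces with eigenvalue $\le 0$, so $\dim\Ker L\le\mathcal{N}_L(0)$ always holds.

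Under the assumption $|\Ric|\le\frac{1}{5n}f$ for $f$ large, the Hodge isomorphism theorem \cite[Theorem~1.4]{HeL2} gives $\dim\Ker(\Delta_f^{\dd})=b_p(M)$. Combining this with the displayed bound yields the claimed estimate on $b_p(M)$.

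The main point needing care is whether the operator in \cite{HeL2} matches the form $-\Delta+\nabla_{\nabla f}+A$ on the same weighted $L^2$ space and domain used in Theorem~\ref{thm:intro-counting}. Specifically, the $f$-harmonic forms in the Hodge theorem must be the $W^{1,2}(e^{-f}dv)$ kernel elements of the self-adjoint realization. I would check this by the Weitzenböck formula $\dd\delta_f+\delta_f\dd=-\Delta+\nabla_{\nabla f}+\mathcal{R}_f$ together with essential self-adjointness on compactly supported forms. Everything else is direct substitution.
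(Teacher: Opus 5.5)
Your proposal is correct and follows the paper's proof: apply Theorem~\ref{thm:intro-counting} to $E=\Lambda^pT^*M$ with $A=\mathcal{R}_f$, $C_A=K$, $\rank(E)=\binom{n}{p}$ and $\lambda=0$, and use the Hodge isomorphism \cite[Theorem~1.4]{HeL2} to get $b_p(M)=\dim\mathcal{H}^p_f(M)\le\mathcal{N}_{\Delta^{\dd}_f}(0)$. You also check, via essential self-adjointness, that $L^2(e^{-f}dv)$ $f$-harmonic forms lie in the kernel of the self-adjoint realization; the paper leaves this step implicit.
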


Compared with the Betti number estimate in~\cite[Theorem~1.1]{HeL2}, the curvature assumption here is stronger, but the estimate is more explicit in $K$ and does not involve the entropy. Note that $b_{n-1}(M)$ gives an upper bound for the number of ends, and the corresponding curvature term is the Einstein tensor (see \cite[section 5.3]{HeL2}). It remains an open problem whether a gradient Ricci shrinker must have only finitely many ends; see Munteanu-Wang~\cite{MunteanuWang2015, MunteanuWang2022}, Munteanu–Schulze–Wang~\cite{MSW}, Bertellotti–Buzano~\cite{BertellottiBuzano} for recent progress.

Another instance is the $f$-Lichnerowicz Laplacian in \cite{CaoZhu}
\[
\mathcal{L}_f h = \tfrac12\Delta h - \tfrac12\nabla_{\nabla f}h + \Rm(h,\cdot),
\]
acting on symmetric $2$-tensors $h$ belonging to $\Ker(\divg_f)_0$, which is the subspace of weighted divergence-free tensors orthogonal to $\Ric$ and to the image of $\divg_f^*$.  Cao and Zhu \cite{CaoZhu} proved that a compact gradient Ricci shrinker is linearly stable if and only if
$\mathcal{L}_f\le 0$ on this subspace.  In the non-compact case, if $\langle\Rm(h),h\rangle\le K|h|^2$ for every $h\in \Ker(\divg_f)_0$, then $\mathcal{L}_f$ also has discrete spectrum. Theorem~\ref{thm:intro-counting} with $L = -\mathcal{L}_f$ and $\lambda = 0$ provides an upper bound on the number of
unstable and null directions (nonnegative eigenvalues of $\mathcal{L}_f$) in terms of $n$ and $K$.

The paper is organized as follows.  Section~\ref{sec:conformal} recalls the local conformal transform and its consequences.
Section~\ref{sec:upperbound} proves Theorem~\ref{thm:intro-upper}. Section~\ref{sec:agmon} develops Agmon estimates and proves Theorem~\ref{thm:polynomial-growth}.  Section~\ref{sec:spectralcounting} proves Theorem~\ref{thm:intro-counting} and Corollaries~\ref{cor:intro-lower} and~\ref{cor:intro-betti}.

\medskip\noindent
\textbf{Acknowledgements:} The author would like to thank Prof. Detang Zhou for stimulating discussion.

% =====================================================
\section{Local conformal transform and its consequences}
\label{sec:conformal}

This section collects facts about gradient Ricci shrinkers that will be used throughout the paper. The local conformal transform and its geometric
consequences---metric equivalence, Ricci curvature bounds, volume doubling---are due to Li--Li--Wang~\cite{LiLiWang}, building on an earlier
observation of Zhang~\cite{ZZhang}.  The volume lower bound is a consequence of the log-Sobolev inequality of Li--Wang~\cite{LiWangheatkernel}.  We present these results in a self-contained manner for the reader's convenience.

Let $(M^n,g,f)$ be a complete gradient Ricci shrinker:
\begin{equation}\label{eq:shrinker}
\Ric+\nabla^2 f=\tfrac12 g .
\end{equation}
Taking the trace gives $\Delta f+S=\tfrac n2$.  The contracted second Bianchi identity together with~\eqref{eq:shrinker} yields $\nabla S=2\Ric(\nabla f,\cdot)$, which integrates to $S+|\nabla f|^2=f+c$.  Adding $c$ to $f$ gives the normalization
\begin{equation}\label{eq:normalize}
|\nabla f|^2+S=f .
\end{equation}
From these we obtain the identity
\begin{equation}\label{eq:identity}
\Delta f-|\nabla f|^2=\frac n2-f .
\end{equation}
Chen~\cite{Chen} proved $S\ge0$, hence $|\nabla f|\le\sqrt f$. Differentiating $\sqrt f$ along a geodesic $\gamma(s)$ gives
$|\tfrac d {ds} \sqrt f|=|\langle\nabla f,\gamma'\rangle|/(2\sqrt f)\le\frac12$, hence
\begin{equation}\label{eq:sqrtf}
\sqrt{f(\gamma(s))}\le\sqrt{f(\gamma(0))}+\frac s2 .
\end{equation}

Cao--Zhou~\cite{CaoZhou} established the quadratic growth estimate
\begin{equation}\label{eq:growthf}
\frac14(r(x)-c_1)_+^2\le f(x)\le\frac14(r(x)+c_2)^2,
\end{equation}
where $r(x)=\dist(x, p)$ for a minimal point $p$ of $f$, and the constants $c_1, c_2$ depend only on the dimension $n$. Equations \eqref{eq:normalize} and \eqref{eq:identity} imply that
\begin{equation}\label{eq:minf}
0\le \min_M f \le \frac{n}{2}.
\end{equation} 
Cao--Zhou~\cite{CaoZhou} also established the volume bound
\begin{equation}\label{eq:volgrowth}
\Vol(B(p,r))\le C r^n.
\end{equation}
By Haslhofer--M\"uller \cite{HaslhoferMuller}, the constant $C$ only depends on $n$ and \eqref{eq:volgrowth} holds for all $r> 0$.

\medskip\noindent
\textbf{The local conformal metric.}
Following Li--Li--Wang~\cite{LiLiWang}, for any $q\in M$ set
\begin{equation}\label{eq:conformal}
\bar g_q=e^{-\frac{2}{n-2}(f-f(q))}g .
\end{equation}
For $\lambda \ge \tfrac12$, we work on balls $B_g(q,r)$ with $r\le 1/\sqrt\lambda$ and $q$ in the sublevel set
\[
\Omega_\lambda:=\{x:f(x)\le\lambda\}.
\]

\begin{lemma}[Oscillation control]\label{lem:f-control}
For $q\in\Omega_\lambda$, $r=1/\sqrt\lambda$, $\lambda \ge \tfrac12$, and any
$y\in B_g(q,r)$,
\begin{align}
f(y)&\le\lambda+1+\frac1{4\lambda}\le\lambda+2,\\
|\nabla f(y)|&\le\sqrt{f(y)}\le\sqrt\lambda+\frac1{2\sqrt\lambda},\\
|f(y)-f(q)|&\le1+\frac1{2\lambda} \le 2 .
\end{align}
\end{lemma}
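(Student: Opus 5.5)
The plan is to apply \eqref{eq:sqrtf} along a minimizing geodesic from $q$ to $y$, and then use $S\ge 0$ through \eqref{eq:normalize}. Fix $y\in B_g(q,r)$ and let $\gamma$ be a minimizing unit-speed geodesic from $q$ to $y$. Its length is $s<r=1/\sqrt\lambda$, so \eqref{eq:sqrtf} gives
\[
\sqrt{f(y)}\le\sqrt{f(q)}+\frac{s}{2}\le\sqrt\lambda+\frac1{2\sqrt\lambda},
\]
using $f(q)\le\lambda$ since $q\in\Omega_\lambda$. Squaring gives $f(y)\le\lambda+1+\frac1{4\lambda}$, and since $\lambda\ge\frac12$ we have $\frac1{4\lambda}\le\frac12$, which yields the first claim $f(y)\le\lambda+2$.

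The second claim follows at once. By Chen's result $S\ge0$, so \eqref{eq:normalize} gives $|\nabla f|^2=f-S\le f$. Hence $|\nabla f(y)|\le\sqrt{f(y)}$, and the bound on $\sqrt{f(y)}$ above finishes it.

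For the oscillation, I integrate $\frac{d}{ds}f(\gamma(s))=\langle\nabla f,\gamma'\rangle$ along $\gamma$. The gradient bound just proved holds at every point of $\gamma$, since all of $\gamma$ lies in $B_g(q,r)$. Therefore
\[
|f(y)-f(q)|\le s\sup_{\gamma}|\nabla f|\le\frac1{\sqrt\lambda}\Big(\sqrt\lambda+\frac1{2\sqrt\lambda}\Big)=1+\frac1{2\lambda}\le 2,
\]
where the last step again uses $\lambda\ge\frac12$.

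There is no real obstacle here. The only points to watch are that the whole geodesic stays inside the ball, so the pointwise gradient bound applies at every point of it, and that the inequality $\sqrt{f}\le\sqrt{f(q)}+s/2$ is applied in the direction starting from $q$.
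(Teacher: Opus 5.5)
Your proof is correct and follows the paper's argument: apply \eqref{eq:sqrtf} from $q$, square, use $S\ge0$ in \eqref{eq:normalize} for the gradient bound, and integrate that bound along a path of length less than $r$. Your remark that the minimizing geodesic stays inside $B_g(q,r)$ is exactly what justifies the paper's step $|f(y)-f(q)|\le r\sup_{B_g(q,r)}|\nabla f|$.
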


\begin{proof}
From~\eqref{eq:sqrtf}, $\sqrt{f(y)}\le\sqrt{f(q)}+r/2\le\sqrt\lambda+1/(2\sqrt\lambda)$. Squaring gives the first bound.  The second follows from~\eqref{eq:normalize} and $S\ge0$.  The third uses
$|f(y)-f(q)|\le r \sup_{B_g(q,r)}|\nabla f|\le(\sqrt\lambda+1/(2\sqrt\lambda))/\sqrt\lambda
=1+1/(2\lambda)\le 2$.
\end{proof}

\begin{corollary}[Metric equivalence]\label{cor:equiv}
Let $\lambda \ge \tfrac12$, $q\in\Omega_\lambda$. On $B_g(q,1/\sqrt\lambda)$, we have 
\[
C_0^{-2} g\le\bar g_q\le C_0^2 g,
\]
where $C_0 = e^{\frac{2}{n-2}}$. Consequently, for any $r\le1/\sqrt\lambda$, 
\[
B_{\bar g_q}(q, C_0^{-1} r)\subset B_g(q,r)\subset B_{\bar g_q}(q, C_0r).
\]
\end{corollary}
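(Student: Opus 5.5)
The plan is to read off the pointwise comparison of metrics directly from the oscillation bound in Lemma~\ref{lem:f-control}, and then deduce the ball inclusions by comparing lengths of curves.

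\emph{Step 1: the pointwise bound.} For $y\in B_g(q,1/\sqrt\lambda)$, Lemma~\ref{lem:f-control} gives $|f(y)-f(q)|\le 2$. Hence the conformal factor in \eqref{eq:conformal} satisfies
\[
e^{-\frac{4}{n-2}}\le e^{-\frac{2}{n-2}(f(y)-f(q))}\le e^{\frac{4}{n-2}}.
\]
Since $C_0^2=e^{\frac{4}{n-2}}$, this is exactly $C_0^{-2}g\le\bar g_q\le C_0^2 g$. It follows that $\bar g_q$-lengths and $g$-lengths of vectors at such points differ by at most a factor $C_0$.

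\emph{Step 2: $B_g(q,r)\subset B_{\bar g_q}(q,C_0 r)$.} Fix $r\le 1/\sqrt\lambda$ and $y\in B_g(q,r)$. Take a minimizing $g$-geodesic from $q$ to $y$. It has length $d<r$, so it stays inside $B_g(q,r)$, where Step 1 applies. Its $\bar g_q$-length is therefore at most $C_0 d<C_0 r$, which gives $d_{\bar g_q}(q,y)<C_0 r$.

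\emph{Step 3: $B_{\bar g_q}(q,C_0^{-1}r)\subset B_g(q,r)$.} This is the step that needs care, because a short $\bar g_q$-curve might a priori leave the region where the comparison holds. Argue by contradiction. Suppose $d_{\bar g_q}(q,y)<C_0^{-1}r$ but $y\notin B_g(q,r)$. Choose a curve $\gamma$ from $q$ to $y$ with $\bar g_q$-length less than $C_0^{-1}r$.
\begin{itemize}
\item Since $y\notin B_g(q,r)$, the curve $\gamma$ must exit $B_g(q,r)$. Let $t_0$ be the first time with $d_g(q,\gamma(t_0))=r$.
\item On $[0,t_0)$ the curve lies in $B_g(q,r)\subset B_g(q,1/\sqrt\lambda)$, so Step 1 applies there.
\item The $g$-length of $\gamma|_{[0,t_0]}$ is thus at most $C_0$ times its $\bar g_q$-length, which is $<C_0\cdot C_0^{-1}r=r$.
\item This contradicts $d_g(q,\gamma(t_0))=r$.
\end{itemize}
Hence $y\in B_g(q,r)$.

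The statement requires $\bar g_q$ to be a metric on all of $M$, which it is, being globally defined; only the comparison is local. That is why the first-exit argument in Step 3 is needed.
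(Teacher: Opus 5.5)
Your proof is correct and follows the intended route. The paper states this corollary as an immediate consequence of the oscillation bound $|f(y)-f(q)|\le 2$ from Lemma~\ref{lem:f-control} and writes out no details; your Steps 1--3 supply exactly those details, including the first-exit argument needed for the inclusion $B_{\bar g_q}(q,C_0^{-1}r)\subset B_g(q,r)$.
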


\begin{lemma}[Conformal Ricci bound]\label{lem:Ricci-bound}
Let $q\in\Omega_\lambda$, $r=1/\sqrt\lambda$, $\lambda \ge \tfrac12$. On $B_g(q,r)$, the Ricci curvature of $\bar g_q$ satisfies
\[
\overline{\Ric}\ge-C_1(n)\,\lambda\,\bar g_q .
\]
\end{lemma}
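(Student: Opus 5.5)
We plan to use the standard formula for the Ricci curvature of a conformal metric. Write $\bar g_q=e^{2w}g$ with $w=-\frac{1}{n-2}(f-f(q))$. Then
\[
\overline{\Ric}=\Ric-(n-2)\bigl(\nabla^2 w-\dd w\otimes \dd w\bigr)-\bigl(\Delta w+(n-2)|\nabla w|^2\bigr)g .
\]
Substituting $\nabla^2 w=-\frac{1}{n-2}\nabla^2 f$, $\dd w=-\frac{1}{n-2}\dd f$ and $\Delta w=-\frac1{n-2}\Delta f$ gives
\[
\overline{\Ric}=\Ric+\nabla^2 f+\tfrac{1}{n-2}\,\dd f\otimes \dd f+\tfrac{1}{n-2}\bigl(\Delta f-|\nabla f|^2\bigr)g .
\]
This is the reason for the choice of exponent $\frac{2}{n-2}$: the Hessian term from the conformal change cancels exactly against $\nabla^2 f$. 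The unbounded curvature of $g$ then disappears through the shrinker equation \eqref{eq:shrinker}, since $\Ric+\nabla^2 f=\frac12 g$.

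The next step is to use \eqref{eq:identity}, which gives $\Delta f-|\nabla f|^2=\frac n2-f$. Hence
\[
\overline{\Ric}=\Bigl(\tfrac12+\tfrac{1}{n-2}\bigl(\tfrac n2-f\bigr)\Bigr)g+\tfrac{1}{n-2}\,\dd f\otimes \dd f\ \ge\ \Bigl(\tfrac12+\tfrac{n/2-f}{n-2}\Bigr)g ,
\]
where we have dropped the nonnegative term $\dd f\otimes \dd f$. On $B_g(q,r)$, Lemma~\ref{lem:f-control} gives $f\le\lambda+2$, so the coefficient is at least $-\frac{\lambda+2}{n-2}$. Since $\lambda\ge\frac12$, we have $\lambda+2\le5\lambda$, and therefore
\[
\overline{\Ric}\ge-\tfrac{5\lambda}{n-2}\,g .
\]

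Finally we convert from $g$ to $\bar g_q$. By Corollary~\ref{cor:equiv}, $g\le C_0^2\bar g_q$ on $B_g(q,1/\sqrt\lambda)$. This yields $\overline{\Ric}\ge-\frac{5C_0^2}{n-2}\lambda\,\bar g_q$, so we may take $C_1(n)=\frac{5e^{4/(n-2)}}{n-2}$.

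The only delicate point is getting the signs and constants in the conformal-change formula right. We will double-check them using the convention $\bar g=e^{2w}g$ in dimension $n$. The rest of the argument is bookkeeping with Lemma~\ref{lem:f-control}.
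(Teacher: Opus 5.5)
Your proposal is correct and matches the paper's proof: the same conformal-change formula, combined with $\Ric+\nabla^2 f=\tfrac12 g$ and \eqref{eq:identity}, reduces $\overline{\Ric}$ to $\frac{1}{n-2}\,\dd f\otimes \dd f+\frac{n-1-f}{n-2}\,g$; you then drop the nonnegative $\dd f\otimes\dd f$ term and use $f\le\lambda+2$ from Lemma~\ref{lem:f-control}. Your conversion to $\bar g_q$ via $g\le C_0^2\bar g_q$ from Corollary~\ref{cor:equiv} goes in the right direction, since the coefficient is negative. It is the same step the paper carries out with the explicit factor $e^{2(f-f(q))/(n-2)}$ and $|f-f(q)|\le 2$.
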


\begin{proof}
By direct calculation, the Ricci tensor of the conformal metric $\bar g_q$ is given by
\[
\overline{\Ric}
=\Ric+\nabla^2 f+\frac{df\otimes df}{n-2}
+\frac{\Delta f-|\nabla f|^2}{n-2}\,g .
\]
Using $\Ric+\nabla^2 f=\frac12g$ and~\eqref{eq:identity},
\[
\overline{\Ric}=\frac{df\otimes df}{n-2}+\frac{n-1-f}{n-2}\,g .
\]
Remarkably, the shrinker equation causes all original curvature terms to cancel, leaving an expression controlled purely by the potential function $f$.
For any tangent vector $X$, $df(X)^2\ge0$, so
\[
\overline{\Ric}(X,X)\ge\frac{n-1-f}{n-2}\,g(X,X)
=\frac{n-1-f}{n-2}\,e^{\frac{2(f-f(q))}{n-2}}\,\bar g_q(X,X).
\]
For $y\in B_g(q,r)$, $f(y)\le\lambda+2$ and $|f(y)-f(q)|\le2$ by Lemma~\ref{lem:f-control}.  Thus for a $\bar g_q$-unit $X$,
$\overline{\Ric}(X,X)\ge (n-2)^{-1} (n-1-\lambda-2)\,e^{\frac{4}{n-2}} \ge-C_1\lambda$ with $C_1=C_1(n)$.
\end{proof}

\begin{remark}
We actually get two-sided bounds for $\overline{\Ric}$, however, we only need the lower bound in this article. If we only have $\Ric + \D^2 f \ge \tfrac12 g$, additional assumption on $\Delta f - |\D f|^2$ is needed to bound $\overline{\Ric}$ from below.
\end{remark}

\medskip\noindent
\textbf{Local volume estimates.}
Throughout the article we will denote by $d\mu = e^{-f}dv$ the weighted measure. Let $\mu(B)=\int_B d\mu=\int_B e^{-f}dv$ be the weighted volume.

\begin{lemma}[Local doubling]\label{lem:doubling}
There exists $C_2=C_2(n)$ such that for all $q\in\Omega_\lambda$, $\lambda\ge\tfrac12$, $r\le e^{-\frac{2}{n-2}}/ \sqrt{\lambda}$,
\[
\frac{\Vol_g(B_g(q,r))}{\Vol_g(B_g(q,r/2))}\le C_2 \qquad \text{and} \qquad \frac{\mu(B_g(q,r))}{\mu(B_g(q,r/2))}\le C_2.
\]
\end{lemma}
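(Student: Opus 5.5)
The plan is to transfer the problem to the conformal metric $\bar g_q$, use Bishop--Gromov there, and then transfer back using Corollary~\ref{cor:equiv}. Fix $q\in\Omega_\lambda$ and $r\le C_0^{-1}/\sqrt\lambda$, where $C_0=e^{\frac{2}{n-2}}$. Then $B_g(q,r)\subset B_g(q,1/\sqrt\lambda)$. On that ball Lemma~\ref{lem:Ricci-bound} gives $\overline{\Ric}\ge -C_1\lambda\,\bar g_q$, and Corollary~\ref{cor:equiv} gives $C_0^{-2}g\le \bar g_q\le C_0^2 g$. These pointwise bounds hold only on the $g$-ball of radius $1/\sqrt\lambda$, not on all of $M$. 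So before applying Bishop--Gromov I must check that the relevant $\bar g_q$-balls stay inside the region where the bounds hold.

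\textbf{Step 1: containment of the conformal balls.} I claim $B_{\bar g_q}(q,C_0 r)\subset B_g(q,1/\sqrt\lambda)$. Suppose $x$ has $\bar g$-distance less than $C_0 r\le 1/\sqrt\lambda$ from $q$. Take a near-minimizing $\bar g$-curve from $q$ to $x$. If this curve left $B_g(q,1/\sqrt\lambda)$, its initial portion inside the ball would have $g$-length at least $1/\sqrt\lambda$. Hence that portion would have $\bar g$-length at least $C_0^{-1}/\sqrt\lambda$.

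This is the delicate point, and I expect the constants to need adjusting. The $\bar g$-radius $C_0 r$ can be as large as $1/\sqrt\lambda$, which exceeds the bound $C_0^{-1}/\sqrt\lambda$. So the exit argument does not immediately give a contradiction. The fix I intend is to shrink radii by a further factor of $C_0^2$ in the intermediate argument. Concretely, I will only ever compare $\bar g$-balls of radius at most $C_0^{-1}/\sqrt\lambda$ centred at $q$; these lie in the region where the bounds hold by the exit argument.

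I then get doubling for $g$-balls of radius $r\le c(n)/\sqrt\lambda$ via the sandwich
\[
B_{\bar g}(q,C_0^{-1}r)\subset B_g(q,r)\subset B_{\bar g}(q,C_0 r).
\]
The ratio of radii in this sandwich is $C_0^{4}$, a dimensional constant. For the remaining range $c(n)/\sqrt\lambda<r\le C_0^{-1}/\sqrt\lambda$, I will iterate the doubling a bounded number of times, or apply relative volume comparison directly with the larger radii. Either way the result changes only by a factor depending on $n$.

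\textbf{Step 2: Bishop--Gromov in $\bar g_q$.} The Ricci lower bound is $-C_1\lambda$, and the radii are at most $C/\sqrt\lambda$, so $\lambda r^2\le C(n)$. Relative volume comparison for $\overline{\Ric}\ge-(n-1)\kappa$ gives
\[
\frac{\Vol_{\bar g}(B_{\bar g}(q,R_1))}{\Vol_{\bar g}(B_{\bar g}(q,R_2))}\le \frac{V_\kappa(R_1)}{V_\kappa(R_2)}.
\]
With $\kappa R_1^2\le C(n)$, the right-hand side is bounded by $C(n)(R_1/R_2)^n$. Comparison only needs the ball of radius $R_1$ to lie where the curvature bound holds, which Step 1 secures. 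Strictly, the geodesics realising distances must also stay in this region; they do, since minimal geodesics from $q$ to points of the ball remain in the ball.

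\textbf{Step 3: back to $g$ and to $\mu$.} The volume forms satisfy $dv_{\bar g}=e^{-\frac{n}{n-2}(f-f(q))}dv_g$. By Lemma~\ref{lem:f-control}, $|f-f(q)|\le2$ on the ball, so $dv_{\bar g}$ and $dv_g$ are comparable within a factor $e^{\frac{2n}{n-2}}$. Combining this with the sandwich, I get
\[
\Vol_g(B_g(q,r))\le C\Vol_{\bar g}(B_{\bar g}(q,C_0r))\le C'\Vol_{\bar g}(B_{\bar g}(q,C_0^{-1}r/2))\le C''\Vol_g(B_g(q,r/2)).
\]
For the weighted measure, $e^{-f}=e^{-f(q)}e^{-(f-f(q))}$, and $e^{-(f-f(q))}\in[e^{-2},e^{2}]$ on $B_g(q,r)$. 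Therefore $\mu(B_g(q,s))$ lies between $e^{-f(q)\mp 2}\Vol_g(B_g(q,s))$ for $s\le r$, so the $\mu$-doubling follows with $C_2$ enlarged by $e^4$.

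The main obstacle is Step 1: keeping every ball used in the comparison inside $B_g(q,1/\sqrt\lambda)$, where the curvature and metric-equivalence bounds hold, while the radius ratios stay dimensional.
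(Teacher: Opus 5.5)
Your route is the paper's: pass to $\bar g_q$, sandwich $g$-balls between $\bar g_q$-balls via Corollary~\ref{cor:equiv}, apply Bishop--Gromov with $\overline{\Ric}\ge -C_1\lambda\,\bar g_q$, and compare $d\mu$ with $e^{-f(q)}dv$. The paper, however, applies Bishop--Gromov directly to $B_{\bar g_q}(q,C_0r)$ with $C_0r$ as large as $1/\sqrt\lambda$. It does not check that this ball lies in $B_g(q,1/\sqrt\lambda)$, the only region where Lemma~\ref{lem:Ricci-bound} was proved.

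Your worry in Step~1 is therefore legitimate, and it is more than bookkeeping. Take the $3$-dimensional Gaussian shrinker with $f=|x|^2/4$, $f(q)=\lambda$, $C_0=e^2$. The outward ray from $q$ has $\bar g_q$-length $\int_0^\infty e^{-(s\sqrt\lambda+s^2/4)}\,ds<1/\sqrt\lambda$. So $B_{\bar g_q}(q,1/\sqrt\lambda)$ is unbounded, and $\overline{\Ric}$ is unbounded below on it.

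Your restricted argument is correct: it uses conformal radii at most $C_0^{-1}/\sqrt\lambda$, hence $r\le C_0^{-2}/\sqrt\lambda$. You also rightly keep the volume-form factor $e^{\pm 2n/(n-2)}$, which the paper's displayed comparison omits.

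The gap is the range $C_0^{-2}/\sqrt\lambda<r\le C_0^{-1}/\sqrt\lambda$. The statement includes it, and the paper later uses it: Lemma~\ref{lem:DirichletLambda1} takes $r=C_0^{-1}/\sqrt\lambda$. Neither of your remedies covers it. ``Relative volume comparison directly with the larger radii'' is exactly the unjustified step. Iterating doubling at $q$ gives nothing at a radius where no estimate centred at $q$ is yet available.

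One fix is to replace the uniform factor $C_0$ by comparison along curves, using that $\sqrt f$ is $\tfrac12$-Lipschitz.
\begin{itemize}
\item For $x\in B_g(q,t/\sqrt\lambda)$, along a minimal $g$-geodesic $f(q)-f\le s\sqrt\lambda$. Hence $d_{\bar g_q}(q,x)\le (n-2)(e^{t/(n-2)}-1)/\sqrt\lambda$.
\item Conversely, after $g$-arclength $s$ along any curve from $q$ one has $f-f(q)\le s\sqrt\lambda+s^2/4$. So, using $\lambda\ge\tfrac12$, any curve leaving $B_g(q,T/\sqrt\lambda)$ has $\bar g_q$-length at least $\lambda^{-1/2}\int_0^T e^{-(\sigma+\sigma^2/2)/(n-2)}\,d\sigma$.
\item For $t\le e^{-2/(n-2)}$ the first bound lies below the second for some $T=T(n)$.
\item The proof of Lemma~\ref{lem:Ricci-bound} runs verbatim on $B_g(q,T/\sqrt\lambda)$ and gives $\overline{\Ric}\ge -C(n)\lambda\,\bar g_q$ there. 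So Bishop--Gromov is legitimate on the whole range.
\end{itemize}

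Alternatively, apply your small-scale doubling at every centre $y\in B_g(q,2r)\subset\Omega_{9\lambda}$, at a scale $\delta\sim C_0^{-2}/\sqrt\lambda$, and chain.
\begin{itemize}
\item Volumes of $\delta$-balls change by a bounded factor per step along geodesics.
\item A maximal $\delta$-separated set in $B_g(q,(k+1)\delta)$ has at most a bounded multiple of the points of one in $B_g(q,k\delta)$.
\item Since $r/\delta\le C(n)$, this gives $C_2=C_2(n)$.
\end{itemize}
Without one of these arguments, your proof establishes the lemma only for $r\le C_0^{-2}/\sqrt\lambda$.
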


\begin{proof}
By Lemma~\ref{lem:f-control}, $|f(y)-f(q)|\le2$ on $B_g(q,r)$, so $e^{-2}e^{-f(q)}dv\le d\mu\le e^{2}e^{-f(q)}dv$.  Hence
\[
\frac{\mu(B_g(q,r))}{\mu(B_g(q,r/2))}\le e^4\,
\frac{\Vol_g(B_g(q,r))}{\Vol_g(B_g(q,r/2))}.
\]
By Corollary~\ref{cor:equiv}, the $g$-volume ratio is comparable to the
$\bar g_q$-volume ratio. With $C_0 = e^{\frac{2}{n-2}}$, we have 
\[
\frac{\Vol_g(B_g(q,r))}{\Vol_g(B_g(q,r/2))} \le \frac{\Vol_{\bar g_q}(B_{\bar g_q}(q, C_0 r)) }{\Vol_{\bar g_q}(B_{\bar g_q}(q, C_0^{-1} r/2))}.
\]
Set $\tilde r= C_0 r$, apply the Bishop--Gromov volume comparison theorem on $\bar g_q$ with $\overline{\Ric}\ge-C_1\lambda$ (Lemma~\ref{lem:Ricci-bound}):
\[
\frac{\Vol_{\bar g_q}(B_{\bar g_q}(q,\tilde r))}
{\Vol_{\bar g_q}(B_{\bar g_q}(q,C_0^{-2}\tilde r/2))}
\le\frac{V_{-C_1\lambda}(\tilde r)}{V_{-C_1\lambda}(C_0^{-2} \tilde r/2)},
\]
where
$V_{-(n-1)K}(\rho)=\omega_{n-1}\int_0^\rho(\frac{\sinh\sqrt K t}{\sqrt K})^{n-1}dt$.
Since $r \le 1/\sqrt\lambda$, $\sqrt{C_1\lambda}\,\tilde r$ is uniformly bounded, so the model-space volume ratio depends only on $n$.
Chaining these inequalities together gives the result.
\end{proof}

The conformal Ricci lower bound yields a local volume upper bound.
\begin{lemma}[Volume upper bound]\label{lem:volupper}
  Let $\lambda \ge \tfrac12$, $x\in\Omega_{\lambda}$, and set
  $r \le e^{-2/(n-2)}/\sqrt{\lambda}$.
  Then
  \begin{equation}\label{eq:volupper}
    \Vol_g\!\bigl(B_g(x,r)\bigr)\le C(n)\,r^{\,n}.
  \end{equation}
\end{lemma}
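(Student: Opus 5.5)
The plan is to pass to the conformal metric $\bar g_x=e^{-\frac{2}{n-2}(f-f(x))}g$, where Bishop--Gromov applies, and then transfer the volume bound back to $g$.

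\emph{Step 1: comparison of balls and volumes.} Set $C_0=e^{\frac{2}{n-2}}$. Since $r\le C_0^{-1}/\sqrt\lambda\le 1/\sqrt\lambda$, Corollary~\ref{cor:equiv} gives
\[
B_g(x,r)\subset B_{\bar g_x}(x,C_0 r).
\]
On $B_g(x,1/\sqrt\lambda)$ Corollary~\ref{cor:equiv} also gives $\bar g_x\ge C_0^{-2}g$. Hence the volume forms satisfy $dv_g\le C_0^{n}\,dv_{\bar g_x}$ there. Combining,
\[
\Vol_g(B_g(x,r))\le C_0^{n}\,\Vol_{\bar g_x}\bigl(B_{\bar g_x}(x,C_0 r)\bigr).
\]

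\emph{Step 2: Bishop--Gromov for $\bar g_x$.} By Lemma~\ref{lem:Ricci-bound}, $\overline{\Ric}\ge -C_1\lambda\,\bar g_x$ on $B_g(x,1/\sqrt\lambda)$. The $\bar g_x$-ball of radius $\tilde r=C_0 r\le 1/\sqrt\lambda$ must lie in a region where this bound holds. By Corollary~\ref{cor:equiv},
\[
B_{\bar g_x}(x,C_0^{-1}\cdot C_0^{-1}/\sqrt\lambda)\subset B_g(x,1/\sqrt\lambda).
\]
That inner radius is smaller than $C_0 r$ when $r$ is near its maximum. There are two ways to handle this.
\begin{itemize}
\item Bound only balls of radius $\rho\le r_0:=C_0^{-2}/\sqrt\lambda$ directly. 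For larger $r\le C_0^{-1}/\sqrt\lambda$, cover by a bounded number of pieces via the doubling Lemma~\ref{lem:doubling}. Alternatively, use that $r\le C_0 r_0$ and absorb the fixed factor into the constant.
\item Note that the Ricci lower bound holds along all $\bar g_x$-minimal geodesics from $x$ of length $<\tilde r$ staying in $B_g(x,1/\sqrt\lambda)$. Bishop's volume comparison only needs the bound along radial geodesics inside the ball in question, which the Corollary~\ref{cor:equiv} inclusions provide after shrinking constants.
\end{itemize}
I will take the simpler route. For $\rho\le r_0$ the ball $B_{\bar g_x}(x,C_0\rho)$ lies in $B_g(x,C_0^2\rho)\subset B_g(x,1/\sqrt\lambda)$, where the Ricci bound holds. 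Bishop's comparison then gives
\[
\Vol_{\bar g_x}(B_{\bar g_x}(x,C_0\rho))\le V_{-C_1\lambda}(C_0\rho)\le C(n)\rho^n,
\]
because $\sqrt{C_1\lambda}\,C_0\rho$ is bounded by a dimensional constant, so $\sinh(t)\le C t$ on the relevant range. For $r_0<r\le C_0 r_0$, apply the doubling inequality of Lemma~\ref{lem:doubling} a bounded number $m(n)$ of times to get
\[
\Vol_g(B_g(x,r))\le C_2^{m}\,\Vol_g(B_g(x,r/2^m)),
\]
where $2^m\ge C_0$. Since $r/2^m\le r_0$, this gives $\le C(n)(r/2^m)^n\le C(n)r^n$.

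\emph{Main obstacle.} The only delicate point is the bookkeeping in Step 2: making sure the conformal ball on which Bishop--Gromov is applied lies inside the region where Lemma~\ref{lem:Ricci-bound} gives the Ricci lower bound. It also requires that the completeness needed for comparison holds there. Bishop's inequality holds for balls whose radial geodesics stay in the region with the curvature bound, even if $\bar g_x$ is only defined or controlled locally. The rest is the explicit estimate $V_{-K}(\rho)\le C(n)\rho^n$ for $\sqrt K\rho\le C(n)$.
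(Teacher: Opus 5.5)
Your proof follows the paper's route: pass to $\bar g_x$, invoke Lemma~\ref{lem:Ricci-bound} and Bishop's comparison, and return to $g$ via Corollary~\ref{cor:equiv}. It adds a step the paper skips. The paper applies volume comparison directly to $B_{\bar g_x}(x,C_0r)$ for every admissible $r$, i.e.\ to $\bar g_x$-balls of radius up to $1/\sqrt\lambda$. Corollary~\ref{cor:equiv}, however, only places $\bar g_x$-balls of radius at most $C_0^{-1}/\sqrt\lambda$ inside $B_g(x,1/\sqrt\lambda)$, the region where the Ricci bound was checked.

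Your restriction to $\rho\le C_0^{-2}/\sqrt\lambda$ gives $B_{\bar g_x}(x,C_0\rho)\subset B_g(x,C_0^2\rho)\subset B_g(x,1/\sqrt\lambda)$, with compact closure in the complete metric $g$. That is what Bishop's theorem actually needs. The leftover range then costs at most three applications of Lemma~\ref{lem:doubling}, since $C_0\le e^2<8$.

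The concern is real, not cosmetic. On the Gaussian shrinker with $n=3$ and $f(x)=\lambda$, the outward ray from $x$ has $\bar g_x$-length $\int_0^\infty e^{-s\sqrt\lambda-s^2/4}\,ds<1/\sqrt\lambda$. So $B_{\bar g_x}(x,1/\sqrt\lambda)$ is not precompact and reaches points where $\overline{\Ric}$ is arbitrarily negative. Taking the earlier lemmas as stated, your argument is correct and more careful than the paper's one-line proof.

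Note, however, that the patch is only as solid as Lemma~\ref{lem:doubling}. The paper proves that lemma by relative volume comparison on the very same $\bar g_q$-balls of radius $\tilde r=C_0r\le 1/\sqrt\lambda$, so the containment problem is inherited rather than removed. A self-contained version would proceed in two steps:
\begin{enumerate}
\item Use your containment check to prove doubling only at scales $\le C_0^{-2}/\sqrt{\lambda+4}$, uniformly for centers in $\Omega_{\lambda+4}$. This set contains $B_g(x,2/\sqrt\lambda)$ by \eqref{eq:sqrtf}, since $\lambda\ge\tfrac12$.
\item Reach scales up to $C_0^{-1}/\sqrt\lambda$, a bounded multiple of that scale, by a standard covering-and-chaining argument in $g$.
\end{enumerate}
Also, the ``absorb the fixed factor'' alternative in your first bullet is not an argument by itself. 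Enlarging $B_g(x,r)$ to $B_g(x,C_0r_0)$ only reproduces the quantity you are trying to bound.
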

\begin{proof}
By Lemma \ref{lem:Ricci-bound} and the volume comparison theorem, $\Vol_{\bar{g}}(B_{\bar g}(x, C_0 r)) \leq C(n) r^n$. The result then follows from Corollary \ref{cor:equiv}.
\end{proof}

The noncollapsing property of gradient Ricci shrinkers \cite{CarrilloNi} yields a local volume lower bound. Let
\begin{equation}\label{eq:entropy}
\bm{\mu} = \ln \int_M (4\pi)^{-\frac n 2 } e^{-f} dv =\ln \left( (4\pi)^{-\frac n 2 } \mu(M) \right).
\end{equation}
 $\bm{\mu}$ is called Perelman's entropy for the Ricci shrinker. 
\begin{lemma}[Volume lower bound]\label{lem:vollower}
  Let $\lambda \ge \tfrac12$, $x\in\Omega_{\lambda}$, and let
  $r \le e^{-2/(n-2)}/\sqrt{\lambda}$.
  Then
  \begin{equation}\label{eq:vollower}
    \Vol_g\!\bigl(B_g(x,r)\bigr)\ge v\,r^{\,n},
  \end{equation}
  with $v= c(n)e^{\bm \mu} >0$.
\end{lemma}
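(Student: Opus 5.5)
The plan is to deduce the lemma from the log-Sobolev inequality of Li--Wang~\cite{LiWangheatkernel}, which holds on any shrinker with constant controlled by the entropy. For every $\tau>0$ and every compactly supported $\varphi$ with $\int_M \varphi^2\,dv=1$ it reads
\[
\int_M \Bigl(\tau\bigl(4|\nabla\varphi|^2+S\varphi^2\bigr)-\varphi^2\ln\varphi^2\Bigr)dv-\tfrac n2\ln(4\pi\tau)-n\ \ge\ \bm\mu .
\]
The key point is that we may take $\tau=r^2$ and test against a cutoff adapted to $B_g(x,r)$. This turns the inequality into a lower bound on the volume of the ball, in the spirit of Carrillo--Ni~\cite{CarrilloNi}.

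\textbf{Step 1 (test function).} Take $\varphi=c\,\eta$, where $\eta=\phi(d(x,\cdot)/r)$. Here $\phi\equiv1$ on $[0,1/2]$, $\phi\equiv 0$ on $[1,\infty)$, and $|\phi'|\le 3$. The constant $c$ is fixed by $c^2\int\eta^2=1$, so $c^2\ge \Vol(B_g(x,r))^{-1}$.

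\textbf{Step 2 (bound each term).} There are three terms to handle.
\begin{itemize}
\item Gradient term: $\tau\int 4|\nabla\varphi|^2\le 36\,c^2\Vol(B_g(x,r))\le 36\,c^2\int\eta^2\cdot C_2 = 36C_2$. This uses the doubling of Lemma~\ref{lem:doubling} to compare $\Vol(B_g(x,r))$ with $\Vol(B_g(x,r/2))\le\int\eta^2$.
\item Scalar curvature term: since $S\le f$, and Lemma~\ref{lem:f-control} gives $f\le\lambda+2$ on $B_g(x,1/\sqrt\lambda)$, we get $\tau\int S\varphi^2\le r^2(\lambda+2)\le C(n)$, because $r\le C/\sqrt\lambda$ and $\lambda\ge\tfrac12$.
\item Entropy term: by Jensen applied to the probability measure $\varphi^2dv$ supported in $B_g(x,r)$, we have $-\int\varphi^2\ln\varphi^2\le \ln\Vol(B_g(x,r))$.
\end{itemize}

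\textbf{Step 3 (conclude).} Inserting these bounds into the log-Sobolev inequality gives
\[
\ln \Vol(B_g(x,r)) \ge \bm\mu+\tfrac n2\ln(4\pi r^2)+n-C(n),
\]
that is, $\Vol(B_g(x,r))\ge c(n)e^{\bm\mu}r^n$.

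The step most likely to cause trouble is making sure the log-Sobolev inequality is available uniformly for all scales $\tau>0$ with constant exactly $\bm\mu$, including large $\tau$. Here only $\tau=r^2\le 2$ is needed, so the Li--Wang version suffices. I would cite it in the form valid for all $\tau>0$ on complete shrinkers, noting that $\bm\mu$ is finite because $\mu(M)<\infty$ by \eqref{eq:growthf} and \eqref{eq:volgrowth}.

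A secondary check is Jensen's direction in Step 2. Since $\varphi^2$ is a probability density on a set of volume $V=\Vol(B_g(x,r))$, we have $\int\varphi^2\ln\varphi^2\ge-\ln V$, which is exactly what the argument needs. Everything else is the routine bookkeeping of the cutoff and the curvature bound.
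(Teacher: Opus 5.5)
Your proposal is correct and follows the paper's proof: the Li--Wang log-Sobolev inequality with $\tau=r^2$, a cutoff adapted to $B_g(x,r)$, the bound $r^2\sup_{B}\Sc\le C(n)$ from Lemma~\ref{lem:f-control}, and doubling (Lemma~\ref{lem:doubling}) to control $\Vol_g(B_g(x,r))/\int\eta^2$. The one cosmetic difference is the entropy term. You use Jensen to get $-\int\varphi^2\ln\varphi^2\le\ln\Vol_g(B_g(x,r))$. The paper instead uses $-s\ln s\le e^{-1}$, which yields $\log I+e^{-1}\Vol_g(B_g(x,r))/I$ in \eqref{eq:logI}. Both give the same conclusion.
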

\begin{proof}
Following the argument of \cite[Lemma 2.4]{LiLiWang}, let $\eta$ be a cut-off function supported on $B_g(x, r)$, such that $\eta = 1$ on $B_g(x, r/2)$ and $|\D \eta|\leq \tfrac 4 r$. Let $I := \int_M \eta^2 dv$ and set $\varphi = I^{-1/2} \eta$, then $\int_M \varphi^2 dv = 1$.

By the sharp Log-Sobolev inequality of \cite[Theorem 1]{LiWangheatkernel} applied to $\varphi$ with $\tau = r^2$, we have 
\begin{equation}\label{eq:logI}
\bm{\mu} + n + \frac{n}{2} \log(4 \pi r^2) \le \log I + (e^{-1} + 64) \frac{\Vol_g(B_g(x, r))}{I} + r^2 \sup_{B_g(x, r)} \Sc.
\end{equation}
By Lemma \ref{lem:f-control}, we have $\Sc \le f \le \lambda + 2$ on $B_g(x, r)$, thus $r^2 \Sc \leq C(n)$. Note that 
\[
\Vol_g(B_g(x, r/2)) \le I \le \Vol_g(B_g(x, r)),
\]
by Lemma \ref{lem:doubling} the ratio $\Vol_g(B_g(x, r))/I \le C(n)$. The inequality \eqref{eq:logI} then yields a lower bound for $I$. The volume lower bound then follows.  
\end{proof}

\medskip\noindent
\textbf{Weighted Sobolev inequality.} As a consequence we have Sobolev inequalities on geodesic balls with radius $\sim 1/\sqrt{\lambda}$ centered in $\Omega_\lambda$.

\begin{proposition}[Local Sobolev]\label{prop:Sobolev}
Let $(M^n, g, f)$ be a gradient Ricci shrinker satisfying \eqref{eq:shrinker}. There exists $C_S=C_S(n)$ such that for all $q\in\Omega_\lambda$, $\lambda\ge \tfrac12$, $r \le \tfrac12 C_0^{-1}/ \sqrt{\lambda}$, $C_0 = e^{2/(n-2)}$,
and all $\phi\in C^1_0(B_g(q,r))$,
\begin{equation}\label{eq:localSobolev}
\left( \int_{B_g(q, r)} \phi^{\frac{2n}{n-2}} dv \right)^{\frac{n-2}{n}} \leq \frac{C_S(n)r^2}{\Vol_g(B_g(q, r))^{2/n}} \int_{B_g(q, r)} |\D \phi|^2 dv,
\end{equation}
and also the weighted version
\begin{equation}\label{eq:localweightedSobolev}
\left( \int_{B_g(q, r)} \phi^{\frac{2n}{n-2}} d\mu \right)^{\frac{n-2}{n}} \leq \frac{C_S(n)r^2}{\mu(B_g(q, r))^{2/n}} \int_{B_g(q, r)} |\D \phi|^2 d\mu,
\end{equation}
where $n \geq 3$ and $d\mu = e^{-f} dv$.
\end{proposition}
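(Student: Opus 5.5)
The plan is to prove the Sobolev inequality first for the conformal metric $\bar g_q$ on a slightly larger ball, and then transfer it back to $g$ and to $d\mu$ using the bounded conformal factor and density. On $B_g(q,1/\sqrt\lambda)$, Lemma~\ref{lem:Ricci-bound} gives $\overline{\Ric}\ge -C_1\lambda\,\bar g_q$. Corollary~\ref{cor:equiv} gives $B_g(q,r)\subset B_{\bar g_q}(q,C_0 r)=:\bar B$ with $C_0r\le \tfrac12/\sqrt\lambda$. Moreover, the $\bar g_q$-ball of radius $2C_0 r$ is contained in $B_g(q,2C_0^2 r)$. If $2C_0^2r$ exceeds $1/\sqrt\lambda$, I shrink the radius in this chain by a fixed factor depending on $n$; I expect this bookkeeping with the constant $C_0$ to be the only delicate point. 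In any case the Ricci bound holds on a $\bar g_q$-ball of radius comparable to $r$, and there $C_1\lambda\,(C_0r)^2\le C(n)$.

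With that setup, the key step is the local Sobolev inequality of Saloff-Coste (equivalently, Buser's Poincaré inequality combined with Bishop--Gromov doubling, or Anderson/Croke-type arguments on balls where $K\rho^2$ is bounded). It states that on a ball $\bar B$ of radius $\rho$, inside a region where $\overline{\Ric}\ge -K$ with $K\rho^2\le C$, every $\phi\in C^1_0(\bar B)$ satisfies
\[
\Big(\int \phi^{\frac{2n}{n-2}}d\bar v\Big)^{\frac{n-2}{n}}\le \frac{C(n)\rho^2}{\Vol_{\bar g}(\bar B)^{2/n}}\int |\bar\nabla\phi|^2_{\bar g}\,d\bar v .
\]
Rather than reprove it, I would cite it directly (Saloff-Coste's theorem applied to the complete manifold $(M,\bar g_q)$ restricted to balls where the curvature bound holds; his proof is local). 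I apply it with $\rho=C_0r$ and $\phi$ supported in $B_g(q,r)\subset\bar B$.

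Next, I transfer the inequality back to $g$. On the ball, $\bar g_q=e^{-2(f-f(q))/(n-2)}g$ with $|f-f(q)|\le 2$ by Lemma~\ref{lem:f-control}. Hence $d\bar v$ and $dv$ are comparable up to a factor of $e^{\pm 2n/(n-2)}$. Similarly $|\bar\nabla\phi|^2_{\bar g}d\bar v = e^{-2(f-f(q))}|\nabla\phi|^2dv$ is comparable to $|\nabla\phi|^2 dv$. Finally $\Vol_{\bar g}(\bar B)\ge c(n)\Vol_{\bar g}(B_g(q,r))\ge c(n)\Vol_g(B_g(q,r))$. This yields \eqref{eq:localSobolev} with $C_S(n)$.

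For the weighted version \eqref{eq:localweightedSobolev}, I would again use $e^{-2}e^{-f(q)}dv\le d\mu\le e^{2}e^{-f(q)}dv$ on the ball. Both sides of \eqref{eq:localSobolev} scale homogeneously in the constant factor $e^{-f(q)}$. The left side carries $(e^{-f(q)})^{(n-2)/n}$, while the right side carries $e^{-f(q)}\cdot \mu(B)^{-2/n}\sim e^{-f(q)}(e^{-f(q)})^{-2/n}\Vol^{-2/n}$, so these powers match. Replacing each measure by $d\mu$ therefore costs only factors $e^{O(1)}$, which are absorbed into $C_S(n)$.
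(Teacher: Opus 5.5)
Your argument is the paper's argument. You use the Ricci lower bound for $\bar g_q$ from Lemma~\ref{lem:Ricci-bound}, the Li--Schoen/Saloff-Coste local Sobolev inequality on $B_{\bar g_q}(q,C_0r)\supset B_g(q,r)$, the transfer back to $g$ via $|f-f(q)|\le 2$, and the weighted version by the same $e^{-f(q)}$ homogeneity count. Two small remarks on the transfer. First, $|\bar\nabla\phi|^2_{\bar g_q}\,d\bar v=e^{-(f-f(q))}|\D\phi|^2\,dv$, not $e^{-2(f-f(q))}|\D\phi|^2\,dv$; this is harmless. Second, your bound $\Vol_{\bar g_q}(\bar B)\ge e^{-2n/(n-2)}\Vol_g(B_g(q,r))$ is all that step needs, so the volume comparison the paper invokes there is superfluous.

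The bookkeeping point you flag is real, and your fix does not fully close it. A Sobolev inequality on a smaller ball says nothing about functions supported in $B_g(q,r)$. So shrinking radii proves the proposition only for $r\le c(n)/\sqrt\lambda$ with some $c(n)<\tfrac12C_0^{-1}$. For example, $c=\tfrac12C_0^{-2}$ is the largest value for which the doubled ball $B_{\bar g_q}(q,2C_0r)$ is guaranteed to lie in $B_{\bar g_q}(q,C_0^{-1}/\sqrt\lambda)\subset B_g(q,1/\sqrt\lambda)$, the only region where Lemma~\ref{lem:Ricci-bound} is available.

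Also, $(M,\bar g_q)$ is not complete, so citing Saloff-Coste on "the complete manifold $(M,\bar g_q)$" is not available. You need the local form, and it does require the enlarged $\bar g_q$-ball to sit inside that region.

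The paper's proof has the same weak spot. It asserts $\overline{\Ric}\ge -C_1\lambda$ on $B_{\bar g_q}(q,2C_0r)$. The lemmas do not give this, and for $n=3$ it fails near the top of the range. Take the Gaussian shrinker with $f(q)=\lambda$ and $r=\tfrac12C_0^{-1}/\sqrt\lambda$. The outward ray from $q$ has total $\bar g_q$-length $\int_0^\infty e^{-\sqrt\lambda v-v^2/4}\,dv<\lambda^{-1/2}=2C_0r$, so the whole ray lies in that ball. Along it, $\overline{\Ric}(X,X)=(2-f)e^{2(f-f(q))}\to-\infty$ for $\bar g_q$-unit $X\perp\D f$. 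The same computation, with the exponent divided by $n-2$, shows that $\bar g_q$ is incomplete in every dimension.

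So neither argument covers the full stated range when $n=3$. The reduced range is, however, all that is used later. The counting argument already chooses $\rho_0$ small. Lemma~\ref{lem:meanvalue-sharp} and Corollary~\ref{cor:polynomialgrowth} go through with a smaller radius, at the cost of constants.
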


\begin{proof}
 On the conformal ball $B_{\bar g_q}(q,2C_0 r)$ we have $\overline{\Ric}\ge -C_1(n)\lambda$ and $2C_0 r\le\sqrt\lambda$. By the method of Li--Schoen~\cite{LiSchoen} and Saloff-Coste~\cite{SaloffCoste1992}, we have
\[
\left( \int_{B_{\bar g_q}(q, C_0 r)} \phi^{\frac{2n}{n-2}} d\bar v \right)^{\frac{n-2}{n}} \leq \frac{C(n)r^2 }{\Vol_{\bar g_q}(B_{\bar g_q}(q, C_0 r))^{2/n}} \int_{B_{\bar g_q}(q, C_0 r)} |\bar{\D} \phi|^2 d\bar{v},
\]
for all $\phi \in C_0^1(B_{\bar g_q}(q, C_0 r))$.  By Corollary~\ref{cor:equiv} $B_g(q,r)\subset B_{\bar g_q}(q, C_0 r)$, where the two metrics are equivalent up to the constant $C_0$. This equivalence and the volume comparison theorem convert the above Sobolev inequality to \eqref{eq:localSobolev}. Then by the equivalence of the measures $e^{-2} e^{-f(q)} dv \le d\mu \le e^2 e^{-f(q)} dv$, we can convert  \eqref{eq:localSobolev} to \eqref{eq:localweightedSobolev} after cancelling $e^{-f(q)}$ on both sides of the equation. 
\end{proof}

\medskip\noindent
\textbf{Local mean-value inequality.} Using the local Sobolev inequality \eqref{eq:localweightedSobolev} we can derive a mean-value inequality. This is not needed for the eigenvalue estimates, but will be used for the polynomial growth estimate in Corollary \ref{cor:polynomialgrowth}.

\begin{lemma}[Mean-value inequality]\label{lem:meanvalue-sharp}
Let $(M^n,g,f)$ be a complete gradient Ricci shrinker.  Assume $x\in\Omega_{k a}$ for some $k\ge 1$ and $a\ge \tfrac12$. Let $u\ge0$
be a nonnegative solution of $\Delta_f u\ge -a u$ on $B_g(x,r)$ with
$r\le \tfrac{1}{2}C_0^{-1}/\sqrt {ka}$.  
Then
\[
u(x)^2\le\frac{C_m(n)}{\Vol_g(B_g(x,r))}\int_{B_g(x,r)}u^2\,dv .
\]
\end{lemma}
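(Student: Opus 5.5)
We plan to prove the estimate by the standard Moser iteration, driven by the weighted local Sobolev inequality \eqref{eq:localweightedSobolev}. Since $x\in\Omega_{ka}$ and $r\le \tfrac12 C_0^{-1}/\sqrt{ka}$, Proposition~\ref{prop:Sobolev} applies with $\lambda=ka\ge\tfrac12$ on every ball $B_g(x,\rho)$, $\rho\le r$. On $B_g(x,r)$ Lemma~\ref{lem:f-control} gives $|f-f(x)|\le 2$, so $d\mu$ and $e^{-f(x)}dv$ are comparable up to $e^{\pm2}$. It therefore suffices to prove the weighted mean-value inequality
\[
u(x)^2\le \frac{C}{\mu(B_g(x,r))}\int_{B_g(x,r)}u^2\,d\mu .
\]
The factors $e^{-f(x)}$ then cancel and give the stated unweighted form.

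\textbf{Energy (Caccioppoli) estimate.} For $p\ge 1$ and a cutoff $\phi$ supported in $B_g(x,\rho)$ with $|\nabla\phi|\le 2/(\rho-\rho')$, test $\Delta_f u\ge -au$ against $\phi^2u^{2p-1}$ and integrate with respect to $d\mu$. The self-adjointness of $\Delta_f$ on $L^2(d\mu)$ removes any need to control $\nabla f$. The usual computation gives
\[
\int|\nabla(\phi u^p)|^2d\mu\le C p\Big(a+\frac{1}{(\rho-\rho')^2}\Big)\int_{B_g(x,\rho)}u^{2p}d\mu .
\]
(Since $u$ is only a subsolution, one may work with $u+\epsilon$ and let $\epsilon\to0$.) The key observation is that $a\le ka\le \tfrac14C_0^{-2}r^{-2}$, so the term $a$ is dominated by $r^{-2}\le(\rho-\rho')^{-2}$. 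This is exactly where the hypothesis $x\in\Omega_{ka}$ with $k\ge1$ enters.

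\textbf{Iteration.} Combine the energy estimate with \eqref{eq:localweightedSobolev} on $B_g(x,\rho)$, writing $\chi=n/(n-2)$. This gives a reverse Hölder step
\[
\|u\|_{L^{2p\chi}(B_{\rho'},d\mu)}^{2p}\le \frac{C p\,\rho^2}{\mu(B_\rho)^{2/n}(\rho-\rho')^2}\|u\|^{2p}_{L^{2p}(B_\rho,d\mu)}.
\]
Take $\rho_i=r(\tfrac12+2^{-i-1})$ and $p_i=\chi^i$. Because $\mu(B_{\rho_i})\ge \mu(B_{r/2})\ge C_2^{-1}\mu(B_r)$ by the weighted doubling of Lemma~\ref{lem:doubling}, the volume factors telescope to $\mu(B_r)^{-1}$. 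The products $\prod (C\chi^i4^i)^{1/\chi^i}$ converge, so letting $i\to\infty$ yields
\[
\sup_{B_{r/2}}u^2\le C\mu(B_r)^{-1}\int_{B_r}u^2d\mu .
\]

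The main point to watch is that all constants are uniform. This requires that the Sobolev constant, with the correct scaling $r^2/\mu(B)^{2/n}$, holds on each intermediate ball $B_g(x,\rho_i)$ rather than only on $B_g(x,r)$. It also requires that the lower-order term $a$ be absorbed uniformly. Both are ensured by the radius restriction $r\le\tfrac12C_0^{-1}/\sqrt{ka}$ together with Proposition~\ref{prop:Sobolev} and Lemma~\ref{lem:doubling}.
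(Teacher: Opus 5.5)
Your proposal is correct and follows the paper's proof. The paper runs the Moser iteration entirely against $d\mu$ so the drift term never appears, absorbs the $a$-term using $r^2ka\lesssim 1$, applies the weighted local Sobolev inequality with $\lambda=ka$, and finally uses $e^{-2}e^{-f(x)}dv\le d\mu\le e^{2}e^{-f(x)}dv$ to cancel $e^{-f(x)}$. You additionally spell out the iteration with shrinking radii and the telescoping of volume factors via weighted doubling, which the paper only calls ``standard''; that is fine.
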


\begin{proof}
We perform the Moser iteration entirely with respect to $d\mu=e^{-f}dv$.

Let $\varphi\in C^\infty_0(B_g(x,r))$ with $0\le\varphi\le1$, $\varphi\equiv1$ on $B_g(x,r/2)$, $|\nabla\varphi|\le4/r$.
For $p\ge1$, multiply $\Delta_f u\ge -a u$ by $u^{2p-1}\varphi^2$ and integrate against $d\mu$:
\begin{equation}\label{eq:step1}
a\int u^{2p}\varphi^2\,d\mu
\ge - \int\varphi^2 u^{2p-1}\Delta_f u \, d\mu .
\end{equation}
Since $\Delta_f$ is the natural Laplacian for the Dirichlet form on $L^2(d\mu)$,  integrating the RHS by parts yields
\begin{align*}
a\int u^{2p}\varphi^2\,d\mu
&\ge\int\bigl\langle\nabla(\varphi^2 u^{2p-1}),\nabla u\bigr\rangle\,d\mu\\
&=(2p-1)\int\varphi^2 u^{2p-2}|\nabla u|^2\,d\mu
+2\int\varphi u^{2p-1}\langle\nabla\varphi,\nabla u\rangle\,d\mu .
\end{align*}
By Cauchy--Schwarz,
\[
2\varphi u^{2p-1}|\langle\nabla\varphi,\nabla u\rangle|
\le\frac{2p-1}{2}\varphi^2 u^{2p-2}|\nabla u|^2 +\frac{2}{2p-1}|\nabla\varphi|^2 u^{2p}.
\]
Hence
\begin{equation}\label{eq:energy}
\frac{2p-1}{2}\int\varphi^2 u^{2p-2}|\nabla u|^2\,d\mu
\le a\int u^{2p}\varphi^2\,d\mu
+\frac{2}{2p-1}\int|\nabla\varphi|^2 u^{2p}\,d\mu .
\end{equation}
Now $u^{2p-2}|\nabla u|^2=p^{-2}|\nabla(u^{p})|^2$.  From~\eqref{eq:energy},
\[
\frac{2p-1}{2p^2}\int\varphi^2|\nabla(u^{p})|^2\,d\mu
\le a\int u^{2p}\varphi^2\,d\mu
+\frac{2}{2p-1}\int|\nabla\varphi|^2 u^{2p}\,d\mu .
\]
For $p\ge1$, $(2p-1)/(2p^2)\ge1/(2p)$ (since $2p-1\ge p$).
Thus
\[
\int\varphi^2|\nabla(u^{p})|^2\,d\mu
\le 2p a\int u^{2p}\varphi^2\,d\mu
+\frac{4p}{2p-1}\int|\nabla\varphi|^2 u^{2p}\,d\mu .
\]
With $|\nabla\varphi|^2\le \tfrac{16}{r^2}$ and $4p/(2p-1)\le4$, we obtain
\begin{equation}\label{eq:reversePoincare}
\begin{split}
\int|\nabla(u^{p}\varphi)|^2\,d\mu
&\le2\int\varphi^2|\nabla(u^{p})|^2\,d\mu+2\int u^{2p}|\nabla\varphi|^2\,d\mu\\
&\le\left( 4p a+\frac{160}{r^2}\right)\int_{B_g(x,r)}u^{2p}\,d\mu.
\end{split}
\end{equation}

Apply Proposition~\ref{prop:Sobolev} with
$\phi=u^{p}\varphi$ and $\lambda$ replaced by $ka$.
Setting $B=B_g(x,r)$, we have
\[
\left(\int (u^{p}\varphi)^{\frac{2n}{n-2}}d\mu\right)^{\frac{n-2}{n}}
\le\frac{C_S(n) r^2}{\mu(B)^{2/n}}
\int_{B} |\nabla(u^{p}\varphi)|^2\, d\mu .
\]
Using \eqref{eq:reversePoincare} and the choice of $\varphi$, 
\[
\left(\int_{B_{r/2}}u^{\frac{2np}{n-2}}d\mu\right)^{\frac{n-2}{n}}
\le\frac{C_S(n) }{\mu(B)^{2/n}} \bigl(4p ar^2+160\bigr) \int_{B}u^{2p}\,d\mu \le \frac{C(n) }{\mu(B)^{2/n}} \int_{B}u^{2p}\,d\mu,
\]
where we have used $r\sqrt a \leq \tfrac{1}{2\sqrt{k}C_0}$ and $k \geq 1$. 

Then standard Moser iteration yields
\begin{equation}\label{eq:meanvalue-mu}
u(x)^2\le\frac{C(n)}{\mu(B)}\int_{B}u^2\,d\mu,
\end{equation}
for a possibly different constant $C(n)$. 

On $B = B_g(x,r)$, Lemma~\ref{lem:f-control} gives $|f(y)-f(x)|\le2$, so $e^{-2}e^{-f(x)}\le e^{-f(y)}\le e^{2}e^{-f(x)}$ for all $y\in B_g(x,r)$.  Hence
\[
e^{-2}e^{-f(x)}\Vol_g(B)\le\mu(B)\le e^{2}e^{-f(x)}\Vol_g(B),
\]
and similarly
\[
e^{-2}e^{-f(x)}\!\int_{B}u^2\,dv \le\int_{B}u^2\,d\mu \le e^{2}e^{-f(x)}\!\int_{B}u^2\,dv .
\]
Substituting into~\eqref{eq:meanvalue-mu}, the factors $e^{-f(x)}$ cancel:
\[
u(x)^2\le\frac{C(n)\,e^4}{\Vol_g(B)}\int_{B}u^2\,dv .
\]
Setting $C_m=C(n) e^4$ completes the proof.
\end{proof}

\begin{remark}
In the original Lemma~2.2 of~\cite{HeOu}, the integration by parts was performed against $dv$ instead of $d\mu$, leaving the drift term
$\langle\nabla f,\nabla u\rangle$ to be estimated by $|\nabla f|\simeq \tfrac12 d(p,x)$.  This produced the superfluous factor
$(a+1+d(p,x))$ and the suboptimal exponent $n+1$.
\end{remark}

% =====================================================
\section{Upper bounds of higher eigenvalues}
\label{sec:upperbound}
Using the consequences of the local conformal transform in Section \ref{sec:conformal}, we prove the upper bound estimate for eigenvalues of $\Delta_f$ by the method of Li-Yau \cite{LiYau}. The following is a restatement of Theorem~\ref{thm:intro-upper}. 
\begin{theorem}\label{thm:upperbound}
Let $(M^n, g, f)$ be a complete gradient Ricci shrinker with $\Ric + \D^2 f= \frac12 g$.  Assume:
\begin{enumerate}
\item[(H1)] There exist $c_0>0$ and $\alpha \in [1,n]$, such that for all
$\lambda \ge 1+ \frac{n}{2}$,
\[
\Vol_g(\Omega_\lambda) \ge c_0\,\lambda^{\,\alpha/2},
\qquad \Omega_\lambda := \{x\in M : f(x) \le \lambda\}.
\]
\end{enumerate}
Then there exists a constant $C = C(n, c_0)$ such that for all $k\ge 1$,
\[
\lambda_k(-\Delta_f) \le C\, k^{2/(n+\alpha)}.
\]
\end{theorem}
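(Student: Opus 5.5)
The plan is to apply min--max with $k$ test functions that have disjoint supports. Fix $k\ge1$ and choose a level $\lambda\ge 1+\frac n2$, to be tuned later. We cover the sublevel set $\Omega_\lambda$ by small balls of radius
\[
r=\tfrac14 C_0^{-1}\lambda^{-1/2}.
\]
Take a maximal $2r$-separated set $\{q_i\}_{i=1}^N\subset\Omega_\lambda$. The balls $B_g(q_i,r)$ are then pairwise disjoint, and the balls $B_g(q_i,4r)$ cover $\Omega_\lambda$. Lemma~\ref{lem:volupper} applies since $4r\le e^{-2/(n-2)}/\sqrt\lambda$, and gives
\[
\Vol_g(\Omega_\lambda)\le \sum_i \Vol_g(B_g(q_i,4r))\le N\,C(n)\,r^n .
\]
Combining this with (H1) yields
\[
N\ge c(n)\,c_0\,\lambda^{\alpha/2}\lambda^{n/2}=c(n)\,c_0\,\lambda^{(n+\alpha)/2}.
\]
(A bounded-overlap argument is not even needed for this direction, since we only need a lower bound on $N$.)

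On each disjoint ball we place a cutoff $\phi_i$ with $\phi_i\equiv1$ on $B_g(q_i,r/2)$, $\supp\phi_i\subset B_g(q_i,r)$, and $|\D\phi_i|\le 4/r$. The Rayleigh quotient is estimated with respect to $d\mu$, using that $e^{-f}$ varies by at most a factor $e^{4}$ on $B_g(q_i,r)$ (Lemma~\ref{lem:f-control}):
\[
\frac{\int|\D\phi_i|^2d\mu}{\int\phi_i^2d\mu}\le e^4\frac{16 r^{-2}\Vol_g(B_g(q_i,r))}{\Vol_g(B_g(q_i,r/2))}\le C(n)\,\lambda .
\]
Here the last step uses the doubling Lemma~\ref{lem:doubling}, which makes the volume ratio at most $C_2(n)$. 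Since the $\phi_i$ have disjoint supports, every linear combination of them has Rayleigh quotient at most $C(n)\lambda$. By min--max (the spectrum is discrete, with $\lambda_0=0$ corresponding to constants), the span of $\phi_1,\dots,\phi_N$ is an $N$-dimensional space, so $\lambda_{N-1}(-\Delta_f)\le C(n)\lambda$. Only the doubling and upper-volume lemmas are used; the local Sobolev inequality and the noncollapsing Lemma~\ref{lem:vollower} are not needed for this direction.

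It remains to tune $\lambda$. Given $k$, set
\[
\lambda=\max\Bigl\{1+\tfrac n2,\ \bigl(\tfrac{k+1}{c(n)c_0}\bigr)^{2/(n+\alpha)}\Bigr\},
\]
so that $N\ge k+1$. Then
\[
\lambda_k(-\Delta_f)\le \lambda_{N-1}(-\Delta_f)\le C(n)\lambda\le C(n,c_0)\,k^{2/(n+\alpha)} .
\]
The constant term $1+\frac n2$ is absorbed into $k^{2/(n+\alpha)}\ge1$ for $k\ge1$, and the dependence on $\alpha\in[1,n]$ is uniform: $(k+1)^{2/(n+\alpha)}\le 2k^{2/(n+\alpha)}$, and the factor $c_0^{-2/(n+\alpha)}$ is bounded by $\max(c_0^{-1},1)$.

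The main point requiring care is the counting step: the packing bound must be uniform, and one must check that all radii stay within the range where Lemmas~\ref{lem:doubling} and~\ref{lem:volupper} apply. The centers lie in $\Omega_\lambda$ by construction, which is why I pack $\Omega_\lambda$ itself rather than a neighborhood of it. One also has to make sure the normalization $\lambda\ge\frac12$ in those lemmas holds and that the constants depend only on $n$. If the covering-by-$4r$-balls step causes trouble, I would instead use the radius $e^{-2/(n-2)}/\sqrt\lambda$ directly for the covering and take $r$ as one quarter of it.
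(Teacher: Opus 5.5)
Your proof is correct and follows essentially the same Li--Yau argument as the paper: pack $\Omega_\lambda$ with disjoint balls of radius $\sim\lambda^{-1/2}$, count them via Lemma~\ref{lem:volupper} and (H1), bound each Rayleigh quotient by $C(n)\lambda$ via Lemma~\ref{lem:doubling}, and conclude by min--max with $\lambda\sim k^{2/(n+\alpha)}$. Your choice $r=\tfrac14 C_0^{-1}\lambda^{-1/2}$, which makes the covering radius $4r$ exactly $e^{-2/(n-2)}/\sqrt\lambda$, is slightly more careful than the paper's packing lemma, which applies Lemma~\ref{lem:volupper} to balls of radius $3/\sqrt\lambda$, outside that lemma's stated range.
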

By \eqref{eq:minf}, $\lambda \ge 1 + \tfrac n2$ ensures that $\Omega_\lambda$ is not empty. 
\begin{lemma}\label{lem:packing}
There exists $\eta = \eta(n) > 0$ such that for all sufficiently large $\lambda$, the set $\Omega_\lambda$ contains at least
\[
N = \bigl\lfloor \eta\, \lambda^{n/2}\, \Vol_g(\Omega_\lambda)
\bigr\rfloor
\]
points with pairwise $g$-distance at least $3/\sqrt\lambda$.
\end{lemma}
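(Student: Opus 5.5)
Take a maximal family $\{x_1,\dots,x_N\}\subset\Omega_\lambda$ with pairwise $g$-distance at least $3/\sqrt\lambda$; such a family exists by Zorn's lemma, or simply because $\Omega_\lambda$ is compact ($f$ is proper). By maximality, every point of $\Omega_\lambda$ lies within distance $<3/\sqrt\lambda$ of some $x_i$, so
\[
\Omega_\lambda\subset\bigcup_{i=1}^N B_g(x_i,3/\sqrt\lambda),
\qquad\text{hence}\qquad
\Vol_g(\Omega_\lambda)\le\sum_{i=1}^N \Vol_g\bigl(B_g(x_i,3/\sqrt\lambda)\bigr).
\]
The whole point is therefore a uniform upper bound $\Vol_g(B_g(x_i,3/\sqrt\lambda))\le C(n)\lambda^{-n/2}$ for centers $x_i\in\Omega_\lambda$. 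Given this, $N\ge C(n)^{-1}\lambda^{n/2}\Vol_g(\Omega_\lambda)$, and the lemma follows with $\eta=C(n)^{-1}$.

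The main obstacle is that Lemma~\ref{lem:volupper} only applies at radius $r\le e^{-2/(n-2)}/\sqrt{\lambda}$, while we need radius $3/\sqrt\lambda$. I would rescale the parameter instead of the radius. Since $\Omega_\lambda\subset\Omega_{\lambda'}$ for $\lambda'=K\lambda$, we may apply Lemma~\ref{lem:volupper} with $\lambda$ replaced by $\lambda'$, taking $K=K(n)=9e^{4/(n-2)}$. Then $e^{-2/(n-2)}/\sqrt{\lambda'}=3/(\sqrt{K}\,e^{-2/(n-2)}\cdot e^{2/(n-2)}\cdot\ldots)$; concretely, $K$ is chosen so that $e^{-2/(n-2)}/\sqrt{K\lambda}\ge 3/\sqrt\lambda$ fails, so the choice must be made the other way around.

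Doing it correctly: we need $3/\sqrt\lambda\le e^{-2/(n-2)}/\sqrt{\lambda'}$, which forces $\lambda'\le \lambda e^{-4/(n-2)}/9<\lambda$. But then $x_i\in\Omega_\lambda$ need not lie in $\Omega_{\lambda'}$, so scaling the parameter does not work. Instead I would cover the ball by small balls. Every $y\in B_g(x_i,3/\sqrt\lambda)$ satisfies $f(y)\le(\sqrt\lambda+\tfrac{3}{2\sqrt\lambda})^2\le 2\lambda$ for $\lambda$ large, by \eqref{eq:sqrtf}. So $B_g(x_i,3/\sqrt\lambda)\subset\Omega_{2\lambda}$, and all small balls centered there fall under Lemma~\ref{lem:volupper} at level $2\lambda$ with radius $\rho=e^{-2/(n-2)}/\sqrt{2\lambda}$.

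Next, take a maximal $\rho$-separated set $\{y_j\}$ in $B_g(x_i,3/\sqrt\lambda)$. The balls $B_g(y_j,\rho/2)$ are disjoint and contained in $B_g(x_i,4/\sqrt\lambda)\subset\Omega_{2\lambda}$. By Lemma~\ref{lem:vollower} (at level $2\lambda$), each has volume at least $v\rho^n$. To avoid bringing in the entropy, I would instead count them using relative doubling, as follows. Chaining Lemma~\ref{lem:doubling} along paths of length $\lesssim 4/\sqrt\lambda$ made of steps of size $\sim\rho$ gives $\Vol_g(B(y_j,\rho/2))\ge c(n)\Vol_g(B(x_i,4/\sqrt\lambda))$. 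Hence the number of points $y_j$ is bounded by $C(n)$. Their balls $B(y_j,\rho)$ cover $B_g(x_i,3/\sqrt\lambda)$, so
\[
\Vol_g\bigl(B_g(x_i,3/\sqrt\lambda)\bigr)\le C(n)\,\rho^n\le C(n)\lambda^{-n/2}.
\]

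The delicate step is this doubling-chain count. A simpler route avoids it: apply Bishop--Gromov directly on $\bar g_{x_i}$ at level $2\lambda$. By Lemma~\ref{lem:Ricci-bound} and Corollary~\ref{cor:equiv}, which extend to radius $O(1/\sqrt\lambda)$ with constants depending only on $n$ since the oscillation of $f$ stays bounded, this gives the volume bound $C(n)\lambda^{-n/2}$ in one step. I would use that argument, exactly as in the proof of Lemma~\ref{lem:volupper}, with enlarged constants.
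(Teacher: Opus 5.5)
\paragraph{There is a genuine gap: the step you finally commit to fails.}

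Your reduction is the paper's: take a maximal $3/\sqrt\lambda$-separated set, cover by the balls $B_g(x_i,3/\sqrt\lambda)$, and prove a uniform bound $\Vol_g(B_g(x_i,3/\sqrt\lambda))\le C(n)\lambda^{-n/2}$. You are right that Lemma~\ref{lem:volupper} does not give this directly, since it needs radius $\le e^{-2/(n-2)}/\sqrt\lambda$. The paper's proof simply cites it at radius $3/\sqrt\lambda$.

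The repair you choose, Bishop--Gromov for $\bar g_{x_i}$ ``with enlarged constants'', does not work. Extending Corollary~\ref{cor:equiv} and Lemma~\ref{lem:Ricci-bound} to $g$-balls $B_g(x_i,R/\sqrt\lambda)$ is fine. But Bishop--Gromov needs the Ricci bound on a $\bar g$-ball that contains $B_g(x_i,3/\sqrt\lambda)$, and bounded oscillation of $f$ on $g$-balls does not place $\bar g$-balls inside $g$-balls. For large $\lambda$ you only get $B_{\bar g}(x_i,Re^{-R/(n-2)}/\sqrt\lambda)\subset B_g(x_i,R/\sqrt\lambda)$. Since $Re^{-R/(n-2)}\le (n-2)/e$, this never reaches $3e^{3/(n-2)}/\sqrt\lambda$ in low dimensions.

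This is not just lossy bookkeeping. On the Gaussian shrinker with $n=3$ and $f(x_i)=\lambda$, we have $\bar g=e^{-2(f-\lambda)}g$. The outward radial ray from $x_i$ has $\bar g$-length $<1/\sqrt\lambda$, while the point at $g$-distance $3/\sqrt\lambda$ inward lies at $\bar g$-distance $\approx (e^3-1)/\sqrt\lambda$. So for large $\lambda$, any $\bar g$-ball about $x_i$ containing $B_g(x_i,3/\sqrt\lambda)$ contains all of $\{f\ge\lambda\}$. On that region $\overline{\Ric}$ is unbounded below, and its $\bar g$-volume is of order $\sqrt\lambda$, not $\lambda^{-3/2}$. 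The conformal chart is usable only at scale $c(n)/\sqrt\lambda$.

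\paragraph{The covering route is the right fix, but your count is circular.}

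Your intermediate route, covering $B_g(x_i,3/\sqrt\lambda)$ by small balls, is the correct repair. However, chaining Lemma~\ref{lem:doubling} along geodesics only compares the small balls $B(y_j,\rho/2)$ with one another. The inequality $\Vol_g(B(y_j,\rho/2))\ge c(n)\Vol_g(B(x_i,4/\sqrt\lambda))$ is a doubling statement at scale $\gg\rho$, and it is equivalent to the count you want to derive from it.

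What works is iterating local doubling in a length space. Set $\rho=e^{-2/(n-2)}/\sqrt{2\lambda}$. For large $\lambda$, \eqref{eq:sqrtf} gives $B_g(x_i,5/\sqrt\lambda)\subset\Omega_{2\lambda}$, so Lemmas~\ref{lem:doubling} and~\ref{lem:volupper} hold at level $2\lambda$ for every center there and every radius $\le\rho$.

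First, a $\rho/4$-separated set in any ball $B(w,3\rho/4)$ has at most $C_2^5$ points. Indeed, the balls $B(z,\rho/8)$ are disjoint and lie in $B(w,\rho)$, and
\[
\Vol B(z,\rho/8)\ge C_2^{-3}\Vol B(z,\rho)\ge C_2^{-3}\Vol B(w,\rho/4)\ge C_2^{-5}\Vol B(w,\rho).
\]

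Second, let $T$ be a maximal $\rho/4$-separated subset of $B(x_i,k\rho/4)$. Moving along minimizing geodesics from $x_i$ shows that $B(x_i,(k+1)\rho/4)\subset\bigcup_{t\in T}B(t,3\rho/4)$. Hence the maximal size of a $\rho/4$-separated set grows by a factor of at most $C_2^5$ per step.

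Set $k_0=\lceil 12\sqrt2\,e^{2/(n-2)}\rceil$. After $k_0$ steps, $B_g(x_i,3/\sqrt\lambda)$ is covered by at most $C_2^{5k_0}$ balls of radius $\rho/4$. Each has volume $\le C(n)\rho^n$ by Lemma~\ref{lem:volupper}, so $\Vol_g(B_g(x_i,3/\sqrt\lambda))\le C(n)\lambda^{-n/2}$. Your covering argument then goes through.
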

\begin{proof}
Let $r = 1/\sqrt\lambda$.  Take a maximal set $\{x_0,\dots,x_N\} \subset \Omega_\lambda$ with $\operatorname{dist}(x_i,x_j) \ge 3r$ for $i\neq j$.
By maximality, the balls $B_g(x_i, 3r)$ cover $\Omega_\lambda$.  Hence
\[
\Vol_g(\Omega_\lambda) \le \sum_{i=0}^N \Vol_g(B_g(x_i, 3r))
\le (N+1) \max_{0\le i\le N} \Vol_g(B_g(x_i, 3r)).
\]
By Lemma \ref{lem:volupper},
$\Vol_g(B_g(x_i, 3r)) \le C(n) \lambda^{-n/2}$ uniformly for
$x_i \in \Omega_\lambda$.  Thus
\[
N+1 \ge \frac{\Vol_g(\Omega_\lambda)}{C(n)\,\lambda^{-n/2}}
= C(n)^{-1}\,\lambda^{n/2}\,\Vol_g(\Omega_\lambda).
\]
Setting $\eta = 1/C(n)$ yields the claim.
\end{proof}

\begin{lemma}[Dirichlet eigenvalue]\label{lem:DirichletLambda1}
For all $q\in\Omega_\lambda$, $\lambda\ge \tfrac12$, $r= C_0^{-1}/ \sqrt\lambda$, $C_0 = e^{\frac{2}{n-2}}$,
\[
\lambda_1^{D,f}\!\bigl(B_g(q,r)\bigr)
:=\inf_{u\in C^\infty_0\setminus\{0\}}
\frac{\int_{B_g(q,r)}|\nabla u|^2\,d\mu}{\int_{B_g(q,r)}u^2\,d\mu}\le C_1(n)\,\lambda .
\]
\end{lemma}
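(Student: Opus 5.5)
We bound the Dirichlet eigenvalue from above by inserting a single explicit test function into the Rayleigh quotient. Set $r=C_0^{-1}/\sqrt\lambda$ and let $u$ be the radial cut-off $u(y)=\psi(\dist_g(q,y))$. Here $\psi\equiv1$ on $[0,r/2]$, $\psi$ decreases linearly to $0$ on $[r/2,r]$, and $\psi$ is smoothed slightly so that $u\in C^\infty_0(B_g(q,r))$. This gives $|\nabla u|\le 2/r$ (up to a harmless factor from the smoothing), $u\equiv1$ on $B_g(q,r/2)$, and $0\le u\le1$. Consequently
\[
\int_{B_g(q,r)}|\nabla u|^2\,d\mu\le \frac{4}{r^2}\,\mu\bigl(B_g(q,r)\bigr),\qquad
\int_{B_g(q,r)}u^2\,d\mu\ge \mu\bigl(B_g(q,r/2)\bigr).
\]

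The only nontrivial input is comparing $\mu(B_g(q,r))$ with $\mu(B_g(q,r/2))$. Since $r=e^{-2/(n-2)}/\sqrt\lambda$ is exactly the admissible radius in Lemma~\ref{lem:doubling}, that lemma gives
\[
\mu\bigl(B_g(q,r)\bigr)\le C_2(n)\,\mu\bigl(B_g(q,r/2)\bigr).
\]
This rests on the conformal Ricci lower bound (Lemma~\ref{lem:Ricci-bound}), Bishop--Gromov for $\bar g_q$, and the fact that $e^{-f}$ varies by at most $e^{\pm 2}$ on the ball (Lemma~\ref{lem:f-control}). Combining the two displays,
\[
\lambda_1^{D,f}\bigl(B_g(q,r)\bigr)\le \frac{4C_2}{r^2}=4C_2C_0^2\,\lambda,
\]
which is the claim with a dimensional constant $C_1(n):=4C_2e^{4/(n-2)}$. (The paper's reuse of the symbol $C_1$ means only "some constant depending on $n$.")

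No step is a genuine obstacle once Lemma~\ref{lem:doubling} is available; the main point is simply to check that the chosen radius falls within the range where local doubling holds. A minor technical point is that the distance function is only Lipschitz, so $u$ as defined lies in $W^{1,2}_0$ rather than $C^\infty_0$. This is handled either by mollifying or by noting that the infimum over $C^\infty_0$ equals the infimum over $W^{1,2}_0$ by density. The resulting small loss in the gradient bound is absorbed into the constant.
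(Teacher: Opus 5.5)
Your proposal is correct and is essentially the paper's argument: a radial Lipschitz cut-off in the Rayleigh quotient gives the bound $(4/r^2)\,\mu(B_g(q,r))/\mu(B_g(q,r/2))$, and Lemma~\ref{lem:doubling} applies because $r=C_0^{-1}/\sqrt\lambda$ is exactly its admissible radius. The only cosmetic difference is the cut-off: the paper uses the tent function $\max\{0,1-\dist(q,y)/r\}$ (gradient $\le 1/r$, value $\ge\frac12$ on the half ball), while you use a plateau cut-off (gradient $\le 2/r$, value $1$ on the half ball), and both give the same constant $4C_0^2C_2$.
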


\begin{proof}
Use the test function $\varphi(y)=\max\{0,1-\dist(q,y)/r\}$.  Then $|\nabla\varphi|\le1/r$ a.e., $\varphi\ge\frac12$
on $B_g(q,r/2)$, so
\[
\lambda_1^{D,f}\!\bigl(B_g(q,r)\bigr)\le(4/r^2)\, \frac{\mu(B_g(q,r))}{\mu(B_g(q,r/2))}
=4C_0^2\lambda\,\frac{\mu(B_g(q,r))}{\mu(B_g(q,r/2))}.
\]
Now apply Lemma~\ref{lem:doubling} and the estimate follows. 
\end{proof}

\begin{proof}[Proof of Theorem~\ref{thm:upperbound}]
Let $\eta$ be the constant from Lemma~\ref{lem:packing} and $c_0$ from $(H1)$, take 
\[
\eta' = \min\left\{ \eta, \frac{2}{c_0} \left(1+\frac{n}{2}\right)^{-n}\right\}.
\]
For an integer $k \ge 1$, set
\[
\lambda = \left(\frac{k+1}{\eta' c_0}\right)^{\!2/(n+\alpha)},
\]
so that $\eta' c_0\lambda^{(n+\alpha)/2} = k+1$. By the choice of $\eta'$ we have $\lambda \ge 1 +\tfrac n2 $, hence $(H1)$ applies. 

By Lemma~\ref{lem:packing} and hypothesis $(H1)$,
\[
N = \bigl\lfloor \eta\,\lambda^{n/2}\,
\Vol_g(\Omega_\lambda) \bigr\rfloor
\ge \bigl\lfloor \eta'\,\lambda^{n/2}\, c_0\lambda^{\alpha/2} \bigr\rfloor
= \bigl\lfloor \eta' c_0\lambda^{(n+\alpha)/2} \bigr\rfloor = k+1.
\]
Thus there exist points $x_0,\dots,x_k \in \Omega_\lambda$ with pairwise
distance $\ge 3r$, where $r = 1/\sqrt\lambda$.

The geodesic balls
\[
B_i := B_g(x_i, r), \qquad i = 0,\dots,k,
\]
are pairwise disjoint.  By Lemma~\ref{lem:DirichletLambda1}, $\lambda_1^{D,f}(B_i) \le C_1\lambda$ for each $i$.
By the min-max principle,
\[
\lambda_k(-\Delta_f) \le \max_{0\le i\le k} \lambda_1^{D,f}(B_i)
\le C_1\lambda
= C_1\,(\eta' c_0)^{-2/(n+\alpha)}\,(k+1)^{2/(n+\alpha)}
\le C\, k^{2/(n+\alpha)}
\]
with $C =\sup_{\alpha \in [1,n]} C_1\,(\eta' c_0)^{-2/(n+\alpha)}\, 2^{2/(n+\alpha)}$, which depends only on
$n$ and $c_0$.
\end{proof}

% =====================================================
\section{Agmon estimates and polynomial growth for eigensections}
\label{sec:agmon}

In this section we adapt the Agmon-type exponential decay estimates developed in~\cite{HeL2} for weighted $L^2$ harmonic forms to the setting of eigensections of the operator $L$.  The core idea, originating from Agmon~\cite{Agmon}, is to conjugate the operator by a $1$-parameter family of weights and exploit the positivity of the potential function.  

\medskip\noindent
\textbf{Setting.}
Let $E\to M$ be a Hermitian vector bundle of rank $\rank(E)$ with a compatible connection $\D$, and let $A\in\Gamma(\End(E))$ be a symmetric endomorphism satisfying
\begin{equation}\label{eq:Abound}
\langle A(\xi),\xi\rangle\ge -C_A|\xi|^2
\end{equation}
for some constant $C_A\ge0$.  The operator under study is
\begin{equation}\label{eq:L}
L:=-\Delta+\D_{\D f}+A,
\end{equation}
which is self-adjoint on the weighted space $L^2(d\mu,\Gamma(E))$ with $d\mu=e^{-f}dv$, and has discrete spectrum $\lambda_1\le\lambda_2\le\cdots$
bounded below~\cite{HeOu}.  Set $\mathcal{N}_L(\lambda):=\#\{i:\lambda_i\le\lambda\}$.

\medskip\noindent
\textbf{Conjugation to Schr\"odinger operator.}
The operator $L$ in~\eqref{eq:L} is self-adjoint on the weighted space $L^2(d\mu,\Gamma(E))$ with $d\mu=e^{-f}dv$.  To apply the Agmon method, we first conjugate by $e^{-f/2}$ to obtain a Schr\"odinger-type operator on the unweighted space $L^2(dv,\Gamma(E))$.  Direct computation shows that the first-order drift term cancels exactly, leaving only a zeroth-order potential.

\begin{lemma}[Conjugation]\label{lem:conjugation}
Define the unitary map $U:L^2(d\mu,\Gamma(E))\to L^2(dv,\Gamma(E))$ by $Uu=e^{-f/2}u$.  Then the conjugated operator
\[
\widetilde L:=U L U^{-1}=e^{-f/2}L\,e^{f/2}
\]
takes the Schr\"odinger form
\begin{equation}\label{eq:tildeL}
\widetilde L=-\Delta+V_f+A,
\qquad\text{where}\qquad
V_f:=\frac14|\nabla f|^2-\frac12\Delta f .
\end{equation}
By the identities \eqref{eq:normalize} and \eqref{eq:identity} on gradient Ricci shrinkers, we can write $V_f = \tfrac{1}{4} (f + \Sc -n)$.
\end{lemma}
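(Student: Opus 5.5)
The plan is a direct computation of $e^{-f/2}L(e^{f/2}v)$ for a smooth section $v$, followed by substitution of the shrinker identities. Unitarity of $U$ is immediate: $\int|Uu|^2\,dv=\int|u|^2e^{-f}dv=\int|u|^2\,d\mu$, and $U$ is invertible with $U^{-1}v=e^{f/2}v$. It therefore suffices to verify \eqref{eq:tildeL} on smooth compactly supported sections, which form a core; self-adjointness transfers through the unitary equivalence.

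Set $u=e^{f/2}v$. Since $\D$ is compatible and $e^{f/2}$ is a scalar function, we have
\[
\D u=e^{f/2}\bigl(\D v+\tfrac12 df\otimes v\bigr).
\]
Taking the trace of the second covariant derivative gives
\[
\Delta u=e^{f/2}\Bigl(\Delta v+\D_{\D f}v+\tfrac12(\Delta f)v+\tfrac14|\D f|^2v\Bigr),
\]
and the drift term is
\[
\D_{\D f}u=e^{f/2}\bigl(\D_{\D f}v+\tfrac12|\D f|^2v\bigr).
\]
The term $A$ commutes with multiplication by the scalar $e^{f/2}$. Combining,
\[
e^{-f/2}L(e^{f/2}v)=-\Delta v-\D_{\D f}v-\tfrac12(\Delta f)v-\tfrac14|\D f|^2v+\D_{\D f}v+\tfrac12|\D f|^2v+Av .
\]
The first-order terms cancel, leaving $-\Delta v+\bigl(\tfrac14|\D f|^2-\tfrac12\Delta f\bigr)v+Av$. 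This is exactly \eqref{eq:tildeL}.

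For the final formula, use \eqref{eq:normalize}, which gives $|\D f|^2=f-\Sc$, and the traced equation $\Delta f=\tfrac n2-\Sc$. Substituting,
\[
V_f=\tfrac14(f-\Sc)-\tfrac14 n+\tfrac12\Sc=\tfrac14(f+\Sc-n).
\]

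No step is a genuine obstacle. The only care needed is the sign bookkeeping in $\Delta(e^{f/2}v)$, in particular the cross term $2\cdot\tfrac12\D_{\D f}v$. If one wants $\widetilde L$ as a self-adjoint operator rather than just the differential expression, one also notes that $U$ maps $C_c^\infty$ onto $C_c^\infty$, so the closures (Friedrichs extensions) correspond.
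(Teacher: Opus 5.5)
Your proposal is correct and follows the paper's proof essentially line for line. You expand $\Delta(e^{f/2}v)$ by the product rule, observe that the cross term $\nabla_{\nabla f}v$ cancels against the drift term, and then substitute $|\nabla f|^2=f-\Sc$ together with the traced equation $\Delta f=\tfrac n2-\Sc$, which is equivalent to the paper's use of \eqref{eq:identity}; your extra remarks on unitarity and on $U$ preserving compactly supported smooth sections are harmless additions that the paper leaves implicit.
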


\begin{proof}
For any smooth section $v$ and function $\phi$, the rough Laplacian $\Delta=\tr\nabla^2$ satisfies the identity
\[
\Delta(\phi v)=(\Delta\phi)v+\phi\Delta v+2\nabla_{\nabla\phi}v .
\] 
Taking $\phi=e^{f/2}$, we have $\nabla\phi=\tfrac12 e^{f/2}\nabla f$ and
\[
\Delta\phi=\divg\!\bigl(\tfrac12 e^{f/2}\nabla f\bigr)
=\tfrac14 e^{f/2}|\nabla f|^2+\tfrac12 e^{f/2}\Delta f .
\]
Hence
\[
\Delta(e^{f/2}v)=e^{f/2}\left(\Delta v+\nabla_{\nabla f}v+\tfrac14|\nabla f|^2 v+\tfrac12\Delta f\,v\right).
\]
On the other hand,
\[
\nabla_{\nabla f}(e^{f/2}v)=\tfrac12 e^{f/2}|\nabla f|^2 v+e^{f/2}\nabla_{\nabla f}v .
\]
Putting these together, the $\nabla_{\nabla f}v$ terms cancel, and we obtain
\[
L(e^{f/2}v)=e^{f/2}\left(-\Delta v+\bigl(\tfrac14|\nabla f|^2-\tfrac12\Delta f\bigr)v+A(v)\right).
\]
 Multiplying by $e^{-f/2}$ yields~\eqref{eq:tildeL}.  
\end{proof}

Consequently, if $Lu=\lambda u$, then the transformed section $\tilde u:=e^{-f/2}u$ satisfies
\begin{equation}\label{eq:eigentransformed}
\tilde L \tilde u= \lambda \tilde u .
\end{equation}

Since $V_f = \tfrac{1}{4}(f+\Sc - n)$, by \eqref{eq:growthf} and $\Sc \ge 0$, $V_f$ is proper and grows to infinity. When $A$ is bounded below, the discreteness of the spectrum of $L$ follows from well-known spectral theory of Schr\"odinger operators \cite[page 120]{ReedSimon}.

\medskip\noindent
\textbf{Agmon estimate on shrinkers.}
We now prove that every eigensection of $\tilde L$ decays exponentially relative to $f$. On a shrinker, the required structural constants are determined entirely by the soliton equations, so the estimate takes a clean, explicit form.

\begin{lemma}\label{lem:agmon-shrinker}
Let $(M^n,g,f)$ be a complete gradient Ricci shrinker, let $d \mu = e^{-f} dv$ be the weighted measure, let $u \in W^{1,2}(d\mu)$ be a section such that $\langle u, L u \rangle \le \lambda |u|^2$, $\lambda \ge 0$. Let $\tilde u = e^{-f/2} u$. Suppose $\langle A(\xi),\xi\rangle \geq - C_A |\xi|^2$ for every section $\xi$.
Then for any $0 < \epsilon \le \tfrac 12$, setting
\[
R_\lambda = 2\left(1 + \frac{n}{4} + C_A + \lambda \right)/\epsilon, 
\]
we have 
\begin{equation}\label{eq:agmon-shrinker}
\int_{\{f>R_\lambda\}} e^{(1- 2 \epsilon) f}\,|\tilde u|^2\,dv
\le R_\lambda e^{(1-2\epsilon)R_\lambda} \int_{\{f\le R_\lambda\}}  |\tilde u|^2\,dv .
\end{equation}
\end{lemma}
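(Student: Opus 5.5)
The plan is the standard Agmon argument for the Schr\"odinger form $\widetilde L=-\Delta+V_f+A$ of Lemma~\ref{lem:conjugation}, with a weight that is a fixed fraction of $f$. I read the hypothesis $\langle u,Lu\rangle\le\lambda|u|^2$ in the weak sense: for every nonnegative bounded Lipschitz $\psi$ with compact support,
\[
\int\psi\langle u,Lu\rangle\,d\mu\le\lambda\int\psi|u|^2\,d\mu ,
\]
where the left side is interpreted after integration by parts against $d\mu$. Conjugating by $e^{-f/2}$ as in Lemma~\ref{lem:conjugation} turns this into the analogous statement for $\tilde u$ with respect to $dv$.

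\textbf{Step 1 (weighted identity).} Fix a bounded Lipschitz function $\phi$ and a cutoff $\chi$. Taking $\psi=e^{2\phi}\chi^2$ and using the pointwise identity
\[
\langle\nabla(e^{2\phi}\chi^2\tilde u),\nabla\tilde u\rangle=|\nabla(e^{\phi}\chi\tilde u)|^2-|\nabla(\phi+\log\chi)|^2e^{2\phi}\chi^2|\tilde u|^2
\]
(the cross term is real, since the connection is compatible with the metric), we obtain
\[
\int|\nabla(e^\phi\chi\tilde u)|^2\,dv+\int\bigl(V_f-C_A-\lambda-|\nabla\phi|^2\bigr)e^{2\phi}\chi^2|\tilde u|^2\,dv\le\int|\nabla\chi|^2e^{2\phi}|\tilde u|^2\,dv+(\text{cross terms in }\nabla\chi).
\]
Here $\chi=\chi_\rho$ is a cutoff equal to $1$ on $B(p,\rho)$, supported in $B(p,2\rho)$, with $|\nabla\chi_\rho|\le 2/\rho$. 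Because $\phi$ is bounded and $\tilde u\in W^{1,2}(dv)$ (since $u\in W^{1,2}(d\mu)$), all $\nabla\chi_\rho$ terms tend to zero as $\rho\to\infty$. We then drop the nonnegative gradient term.

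\textbf{Step 2 (choice of weight).} Take
\[
\phi_T=\bigl(\tfrac12-\epsilon\bigr)\min(f,T),
\]
which is bounded for each fixed $T$. Since $|\nabla f|^2\le f$, we have a.e.
\[
|\nabla\phi_T|^2\le\bigl(\tfrac12-\epsilon\bigr)^2f\le\bigl(\tfrac14-\epsilon+\epsilon^2\bigr)f .
\]
Combined with $V_f=\tfrac14(f+\Sc-n)\ge\tfrac14(f-n)$, which holds because $\Sc\ge0$, this gives, using $\epsilon\le\tfrac12$,
\[
V_f-C_A-\lambda-|\nabla\phi_T|^2\ge(\epsilon-\epsilon^2)f-\tfrac n4-C_A-\lambda\ge\tfrac{\epsilon}{2}f-\tfrac n4-C_A-\lambda .
\]
On $\{f>R_\lambda\}$ the right-hand side is at least $1$, precisely by the definition of $R_\lambda$. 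On $\{f\le R_\lambda\}$ it is at least
\[
-\bigl(\tfrac n4+C_A+\lambda\bigr)\ge-\tfrac{\epsilon R_\lambda}{2}\ge-R_\lambda .
\]
We also use that $e^{2\phi_T}\le e^{(1-2\epsilon)R_\lambda}$ there.

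\textbf{Step 3 (conclusion).} Moving the negative part to the right-hand side gives
\[
\int_{\{f>R_\lambda\}}e^{2\phi_T}|\tilde u|^2\,dv\le R_\lambda e^{(1-2\epsilon)R_\lambda}\int_{\{f\le R_\lambda\}}|\tilde u|^2\,dv .
\]
The right-hand side is uniform in $T$, so letting $T\to\infty$ and applying monotone convergence yields~\eqref{eq:agmon-shrinker}.

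The main obstacle is the justification in Step 1. One must make the weak form of the hypothesis precise, and remove the cutoff $\chi_\rho$ while the weight is still bounded. This is exactly why the truncation $\min(f,T)$ is needed before passing $\rho\to\infty$ and then $T\to\infty$. The Lipschitz truncation is harmless because $|\nabla\min(f,T)|\le|\nabla f|$ a.e.
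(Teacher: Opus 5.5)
Your proof is correct and is essentially the paper's argument. It uses the same conjugation to $-\Delta+V_f+A$, the same ground-state identity $\langle\nabla(w^2\tilde u),\nabla\tilde u\rangle=|\nabla(w\tilde u)|^2-|\nabla w|^2|\tilde u|^2$ with a bounded regularization of the weight $e^{(\frac12-\epsilon)f}$, the same bounds $|\nabla f|^2\le f$ and $V_f\ge\frac14(f-n)$, and the same threshold $R_\lambda$. The differences are minor. You truncate by $\min(f,T)$ and use monotone convergence, where the paper uses $f/(1+\theta f)$ and Fatou. You also put the cutoff $\chi_\rho$ directly into the weight. That choice makes the error terms $e^{2\phi}(|\nabla\chi_\rho|^2+2\chi_\rho\nabla\chi_\rho\cdot\nabla\phi)|\tilde u|^2$ involve only $|\tilde u|^2\in L^1(dv)$ and the bounded $\nabla\phi_T$. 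So you never need your parenthetical claim that $\tilde u\in W^{1,2}(dv)$ follows from $u\in W^{1,2}(d\mu)$; that claim is true but not immediate since $|\nabla f|$ is unbounded, and the paper spends a paragraph proving it.
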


\begin{proof}
From~\eqref{eq:tildeL} and the bound on $A$, we have the pointwise inequality
\begin{equation}\label{eq:effective-potential}
V_f|\xi|^2+\langle A(\xi),\xi\rangle-\lambda|\xi|^2
\ge \left(\frac{f}{4}-\frac{n}{4}-C_A-\lambda \right)|\xi|^2,
\end{equation}
 for every section $\xi$.

We first verify that $\tilde u \in W^{1,2}(dv)$, which is not obvious since $\D \tilde u = e^{-f/2} \D u - \tfrac12 \D f \otimes \tilde u$ and $|\D f|$ is unbounded. The assumption $\langle u, L u\rangle \leq \lambda |u|^2 $ implies that $\langle \tilde u, \tilde L \tilde u\rangle \leq \lambda |\tilde u|^2$. Multiply this inequality by the square of any cutoff function $\eta \in C_0^1(M)$, integration by parts yields
\[
\int \eta^2 |\D \tilde u|^2 + 2\eta \langle \D_{\D \eta} \tilde u, \tilde u\rangle + \eta^2 \langle (V_f + A - \lambda) \tilde u, \tilde u \rangle \, dv\le 0. 
\]
Calculate the second term 
\[
2\eta \langle \D_{\D \eta} \tilde u, \tilde u \rangle = 2\eta e^{-f} \langle \D_{\D \eta} u, u\rangle - \eta \langle \D \eta, \D f\rangle |\tilde u|^2 \ge -2 \eta e^{-f} |\D \eta| |u| |\D u| - \eta |\D \eta| |\D f| |\tilde u|^2.
\]
By \eqref{eq:effective-potential} and the fact that $|\D f|^2 = f - \Sc \le f$, we have 
\[
\begin{split}
\int \eta^2 |\D \tilde u|^2  +  \eta^2 & \left(\frac{f}{4}-\frac{n}{4} -C_A  - \lambda \right) |\tilde u|^2 \, dv \le  \int 2 \eta e^{-f} |\D \eta| |u| |\D u| + \eta |\D \eta| |\D f| |\tilde u|^2 \, dv\\
\le & \int \eta^2 e^{-f} |u|^2 + |\D \eta|^2 e^{-f} |\D u|^2 + |\D \eta|^2 |\tilde u|^2 + \frac{f}{4} \eta^2  |\tilde u|^2 \, dv. 
\end{split}
\]
The terms containing $f$ cancel on both sides of the inequality. Then by the assumption $u \in W^{1,2}(d\mu)$, we can let $\eta \to 1$ to see that $\int |\D \tilde u|^2 dv \le C \int |u|^2 d\mu$, hence $\tilde u \in W^{1,2}(dv)$.  

By \eqref{eq:growthf}, $f(x)\to\infty$ as $x\to\infty$.  Define the bounded regularization
\[
f_\theta:=\frac{f}{1+\theta f},\qquad \theta\in(0,1).
\]
Then $f_\theta\le1/\theta$ everywhere, $f_\theta(x)\to f(x)$ pointwise as $\theta\to0$, and
\[
|\nabla f_\theta|=\frac{|\nabla f|}{(1+\theta f)^2}\le|\nabla f| .
\]
From $|\nabla f|^2=f-S\le f$ we obtain $|\nabla f_\theta|^2\le f$.

Define the weight function
\[
\varphi(s):=e^{(\frac{1}{2} -\epsilon) s},\qquad s\ge0,
\]
which satisfies $\varphi'(s)=\bigl(\frac{1}{2}-\epsilon \bigr)\varphi(s)$.  Set $\tilde u=e^{-f/2}u$ and let
\[
\tilde u_\theta:=\varphi(f_\theta)\tilde u .
\]
Note that $f_\theta$ and $\varphi(f_\theta)$ are bounded on $M$.

The inequality $\langle u, L u \rangle \le  \lambda |u|^2$ gives $\langle (-\Delta+V_f+A-\lambda)(\varphi(f_\theta)^{-1}\tilde u_\theta), \varphi(f_\theta) \tilde u_\theta \rangle \le 0$.  Integrate by parts:
\[
0 \ge \int_M\langle\nabla(\varphi\tilde u_\theta),\nabla(\varphi^{-1}\tilde u_\theta)\rangle\,dv+ \int_M
(V_f-\lambda)|\tilde u_\theta|^2 + \langle A(\tilde u_\theta), \tilde u_ \theta \rangle \,dv .
\]
The integration by parts can be verified by standard cutoff arguments since $\varphi$ is bounded and $\tilde u\in W^{1,2}(dv)$. 

Now compute the gradient inner product.  Writing $\varphi=\varphi(f_\theta)$ for brevity,
\[
\nabla(\varphi\tilde u_\theta)=\varphi\nabla\tilde u_\theta+\varphi' \tilde u_\theta \D f_\theta,
\qquad
\nabla(\varphi^{-1}\tilde u_\theta)=\varphi^{-1}\nabla\tilde u_\theta-\varphi^{-2}\varphi'\tilde u_\theta \D f_\theta.
\]
Taking the inner product,
\begin{align*}
\langle\nabla(\varphi\tilde u_\theta),\nabla(\varphi^{-1}\tilde u_\theta)\rangle
&=|\nabla\tilde u_\theta|^2
-\varphi^{-1}\varphi'  \langle \tilde u_\theta, \nabla_{\D f_\theta} \tilde u_\theta \rangle\\
&\quad+\varphi^{-1}\varphi'  \langle \tilde u_\theta,\nabla_{\D f_\theta}\tilde u_\theta\rangle
-(\varphi^{-1}\varphi')^2|\D f_\theta|^2|\tilde u_\theta|^2 .
\end{align*}
The two middle terms cancel.  Since $\varphi^{-1} \varphi'=\frac12-\epsilon $, we obtain
\[
\langle\nabla(\varphi\tilde u_\theta),\nabla(\varphi^{-1}\tilde u_\theta)\rangle
=|\nabla\tilde u_\theta|^2
-\left(\frac12-\epsilon \right)^2|\nabla f_\theta|^2\,|\tilde u_\theta|^2 .
\]

Therefore the full identity reads
\begin{equation}\label{eq:energy-identity}
0\ge \int_M\left(V_f+\frac{\langle A(\tilde u_\theta),\tilde u_\theta\rangle}{|\tilde u_\theta|^2}
-\lambda-\Bigl(\frac12-\epsilon \Bigr)^2|\nabla f_\theta|^2\right)|\tilde u_\theta|^2\,dv
+\int_M|\nabla\tilde u_\theta|^2\,dv .
\end{equation}

Insert the lower bounds~\eqref{eq:effective-potential} and $|\nabla f_\theta|^2\le f$, and drop the nonnegative term $|\nabla\tilde u_\theta|^2$:
\begin{equation}\label{eq:L2estimatetheta}
\begin{split}
0\ge & \int_M\left(\frac14 f- \frac n4 -C_A-\lambda-\Bigl(\frac12-\epsilon \Bigr)^2 f\right)|\tilde u_\theta|^2\,dv \\
= & \int_M\left( (\epsilon - \epsilon^2) f - \frac n4 -C_A-\lambda \right)|\tilde u_\theta|^2\,dv.
\end{split}
\end{equation}

Denote $b = \tfrac n 4 + C_A + \lambda$. Let $\epsilon < \tfrac{1}{2}$, thus $\epsilon -\epsilon^2 > \tfrac12 \epsilon$. Let $R_\lambda : = 2(b+1)/\epsilon$. Since $\tilde u$ is bounded on $\{f < R_\lambda\}$,  
\[
\begin{split}
\lim_{\theta \to 0} \int_{\{ f < R_\lambda\}} \left((\epsilon - \epsilon^2) f - b \right)|\tilde u_\theta|^2\,dv = &  \int_{\{f < R_\lambda\}} \left(  (\epsilon - \epsilon^2) f- b\right) \varphi(f)^2|\tilde u|^2 \,dv. \\
\end{split}
\]
Then by \eqref{eq:L2estimatetheta} and Fatou's lemma, we have
\[
\int_{\{f \ge R_\lambda\}} \left( (\epsilon - \epsilon^2) f- b\right)  \varphi(f)^2 |\tilde u|^2 \,dv \leq  \int_{\{f < R_\lambda\}} \left( b - (\epsilon - \epsilon^2) f \right)_+   \varphi(f)^2 |\tilde u|^2 \, dv.
\]
This yields the lemma. 
\end{proof}

\medskip\noindent
\textbf{Pointwise polynomial growth estimate.}
Lemma~\ref{lem:agmon-shrinker} yields exponential decay for $\tilde u$, which only gives $|u| \lesssim e^{\epsilon f}$. To obtain polynomial control we use instead the weight $\varphi(s) = (1+s)^{-q}e^{s/2}$ from \cite[Lemma~3.1]{HeL2}.  The factor $e^{s/2}$ is the maximal exponent
allowed by the shrinker identities; it cancels the conjugation exactly, giving $\varphi(f)\tilde u = (1+f)^{-q}u$.  Unlike the harmonic case
in \cite{HeL2}, here $\lambda$ may be large, so $q = \sigma + \frac{n}{4} + C_A + \lambda$ and $R_\lambda$ grow with
$\lambda$; the following lemma verifies that the estimate remains valid with explicit $\lambda$-dependence. This is not needed for the eigenvalue estimates, but may be of independent interest.

\begin{lemma}\label{lem:agmon-shrinker-polynomial}
Let $(M^n,g,f)$ be a complete gradient Ricci shrinker, let $d \mu = e^{-f} dv$ be the weighted measure, let $u \in W^{1,2}(d\mu)$ be a section such that $\langle u, L u \rangle \le \lambda |u|^2$, $\lambda \ge 0$.  Suppose $\langle A(\xi),\xi\rangle \geq - C_A |\xi|^2$ for every section $\xi$.
Then for any $0 < \sigma \le \tfrac n 8$, setting
\[
q= \sigma + \frac{n}{4} + C_A + \lambda, \quad R_\lambda = 2\left(\frac{n}{4} + C_A + \lambda \right)^3/\sigma^2, 
\]
we have 
\begin{equation}\label{eq:agmon-shrinker-polynomial}
\frac{\sigma}{2}\int_{\{f>R_\lambda\}}(1+f)^{-2q}\,| u|^2\,dv
\le \left(\sigma + \frac{n}{4} + C_A + \lambda \right)^2 \int_{\{f\le R_\lambda\}}  | u|^2\,dv .
\end{equation}
\end{lemma}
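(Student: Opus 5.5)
The plan is to rerun the Agmon argument of Lemma~\ref{lem:agmon-shrinker} with the weight $\varphi(s)=(1+s)^{-q}e^{s/2}$ in place of $e^{(\frac12-\epsilon)s}$. With this weight $\varphi(f)\tilde u=(1+f)^{-q}u$, and the logarithmic derivative is
\[
\frac{\varphi'}{\varphi}(s)=\frac12-\frac{q}{1+s}.
\]

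\textbf{Regularization.} The regularization $f_\theta$ is awkward here, because $\varphi'/\varphi$ is no longer constant. I would instead truncate at a large level $T$: set $\varphi_T(s)=\varphi(s)$ for $s\le T$ and $\varphi_T(s)=\varphi(T)$ for $s\ge T$. This is a bounded Lipschitz weight with $\varphi_T'/\varphi_T=0$ on $\{f>T\}$. Put $\tilde u_T=\varphi_T(f)\tilde u$. Since $\tilde u\in W^{1,2}(dv)$ was already shown in the proof of Lemma~\ref{lem:agmon-shrinker}, the same cutoff argument justifies the integration by parts. The identical algebra (the middle cross terms cancel) then gives
\[
0\ge\int_M\Bigl(V_f+\tfrac{\langle A\tilde u_T,\tilde u_T\rangle}{|\tilde u_T|^2}-\lambda-\bigl(\tfrac{\varphi_T'}{\varphi_T}\bigr)^2|\nabla f|^2\Bigr)|\tilde u_T|^2\,dv .
\]

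\textbf{Pointwise bound on the coefficient.} Write $b=\frac n4+C_A+\lambda$, so $q=\sigma+b$. Use $V_f=\frac14(f+\Sc-n)$ and $|\nabla f|^2=f-\Sc$.
\begin{itemize}
\item On $\{f>T\}$ the coefficient is at least $\frac14 f-b$, which is positive once $T$ is large.
\item On $\{f\le T\}$ the $\Sc$-terms combine to $\Sc\bigl(\frac14-(\frac{\varphi'}{\varphi})^2\bigr)$. This is nonnegative wherever $q/(1+f)\le1$, and in particular on $\{f>R_\lambda\}$ since $R_\lambda\ge q$. Expanding the square, the coefficient there is at least
\[
\frac{qf}{1+f}-\frac{q^2f}{(1+f)^2}-b\;\ge\;\sigma-\frac{q+q^2}{1+f}.
\]
\item This lower bound is $\ge\sigma/2$ as soon as $1+f\ge 2(q+q^2)/\sigma$. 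Using $\sigma\le n/8\le b/2$ (so $q\le\frac32 b$), one checks that $R_\lambda=2b^3/\sigma^2$ exceeds this threshold. That check is only constant bookkeeping, and the inequality may need slight adjustment of numerical factors.
\item On $\{f\le R_\lambda\}$ the negative part of the coefficient is bounded by roughly $q^2=(\sigma+b)^2$: the $\Sc$-term is controlled via $\Sc\le f$, and $(\varphi'/\varphi)^2|\nabla f|^2\le(\frac12+q)^2 f$ is balanced against the $f/4$ term. Here I would also use $(1+f)^{-2q}\le 1$.
\end{itemize}
Moving the inner region to the right-hand side gives the estimate for $\tilde u_T$ with constant $\sigma/2$ on the left and $(\sigma+b)^2$ on the right.

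\textbf{Passing to the limit and the main obstacle.} Since $\varphi_T(f)\tilde u\to(1+f)^{-q}u$ monotonically on $\{f>R_\lambda\}$ once $T$ exceeds the minimum point $s=2q-1$ of $\varphi$, monotone convergence as $T\to\infty$ yields \eqref{eq:agmon-shrinker-polynomial}. I expect the main obstacle to be this pointwise coefficient analysis. It needs three things:
\begin{itemize}
\item handling the scalar curvature terms, which are harmless only where $|\varphi'/\varphi|\le\frac12$;
\item making the $q^2/(1+f)$ error explicitly dominated, which is what forces the cubic dependence $R_\lambda\sim b^3/\sigma^2$;
\item bounding the inner region by exactly $(\sigma+b)^2$.
\end{itemize}
If the truncation at $T$ causes trouble at the kink, I would instead smooth $\varphi_T$ near $T$, keeping $0\le\varphi_T'/\varphi_T\le\varphi'/\varphi$ there.
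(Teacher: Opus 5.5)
Your route is the paper's. It uses the same weight $(1+s)^{-q}e^{s/2}$, the same cancellation of cross terms, and the same coefficient $c(f)=\frac{qf(1+f-q)}{(1+f)^2}-b$. It also uses the same split into $\{f>R_\lambda\}$, where one wants $c\ge\sigma/2$, and $\{f\le R_\lambda\}$, where $(-c)_+$ is bounded. The only difference is the regularization. The paper composes $\varphi$ with the bounded $f_\theta=f/(1+\theta f)$ and lets $\theta\to0$ via Fatou, while you freeze $\varphi$ above a level $T$. Your version is valid and slightly cleaner: on $\{f\le T\}$ the coefficient is exactly $c(f)$, and on $\{f>T\}$ it is at least $\tfrac f4-b>0$ and can simply be dropped.

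There are two slips. First, the scalar curvature terms combine to $\Sc\bigl(\tfrac14+(\varphi'/\varphi)^2\bigr)\ge0$, not with a minus sign, so your first listed obstacle does not exist. This is why the paper just uses $V_f\ge\tfrac14(f-n)$ and $|\D f|^2\le f$. Second, in the inner region the crude bound $(\varphi'/\varphi)^2|\D f|^2\le(\tfrac12+q)^2f$ leaves a deficit of order $(q+q^2)f$, not $q^2$. Use your exact expansion instead, which gives $(-c)_+\le b+\frac{q^2f}{(1+f)^2}\le b+\tfrac{q^2}{4}$.

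The step that genuinely fails is the deferred constant check, and the paper's own verification fails at the same place. The threshold $1+f\ge2(q+q^2)/\sigma$ is not implied by $f\ge 2b^3/\sigma^2$ when $\sigma$ is near $b/2$. Take $n=3$, $C_A=\lambda=0$, $\sigma=\tfrac38$: then $b=\tfrac34$, $q=\tfrac98$ and $R_\lambda=6$, whereas the threshold needs $f\ge 11.75$. Indeed $c(6)\approx0.06<\sigma/2$. In the same example the paper's choice $k=2b^2/\sigma=3$ violates its own requirement $k\ge\sigma^{-1}(q^2+b+\sigma/2)=5.875$.

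The inner bound $(-c)_+\le(\sigma+b)^2$, claimed by both you and the paper, also fails when $\sigma+b$ is small. At a point where $f=0$ (the origin of the Gaussian shrinker), $(-c)_+=b$. This exceeds $(b+\sigma)^2$ for $n=3$, $C_A=\lambda=0$, $\sigma<0.1$.

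So neither argument proves the stated constants on the whole range $0<\sigma\le n/8$. The method does give the lemma after two changes. Enlarge $R_\lambda$, e.g. to $5b^3/\sigma^2$, which dominates $2(q+q^2)/\sigma$ because $b\ge\tfrac34$ and $\sigma\le b/2$. Replace $(\sigma+b)^2$ by $b+\tfrac{q^2}{4}\le2(\sigma+b)^2$. This only changes numerical constants in Corollary~\ref{cor:polynomialgrowth}.
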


\begin{proof}
Define $f_\theta$ as in the proof of Lemma \ref{lem:agmon-shrinker}.

Choose $q\ge0$ (to be fixed later).  Define the weight function
\[
\varphi(s):=(1+s)^{-q}e^{s/2},\qquad s\ge0,
\]
which satisfies $\varphi'(s)=\bigl(\frac{1}{2}-\frac{q}{1+s}\bigr)\varphi(s)$.  Set $\tilde u=e^{-f/2}u$ and let
\[
\tilde u_\theta:=\varphi(f_\theta)\tilde u .
\]
We still have $f_\theta$ and $\varphi(f_\theta)$ bounded on $M$.

By the same argument as before, and use $\varphi^{-1}\varphi'=\frac12-q/(1+f_\theta)$, we obtain
\[
\langle\nabla(\varphi\tilde u_\theta),\nabla(\varphi^{-1}\tilde u_\theta)\rangle
=|\nabla\tilde u_\theta|^2
-\left(\frac12-\frac{q}{1+f_\theta}\right)^2|\nabla f_\theta|^2\,|\tilde u_\theta|^2 .
\]
Then \eqref{eq:energy-identity} becomes
\[
0\ge \int_M\Bigl(V_f+\frac{\langle A(\tilde u_\theta),\tilde u_\theta\rangle}{|\tilde u_\theta|^2}
-\lambda-\left(\frac12-\frac{q}{1+f_\theta}\right)^2|\nabla f_\theta|^2\Bigr)|\tilde u_\theta|^2\,dv
+\int_M|\nabla\tilde u_\theta|^2\,dv .
\]
Insert the lower bounds~\eqref{eq:effective-potential} and $|\nabla f_\theta|^2\le f$, and drop the nonnegative term $|\nabla\tilde u_\theta|^2$:
\begin{equation}\label{eq:L2estimatetheta-polynomial}
\begin{split}
0\ge & \int_M\Bigl(\frac14 f- \frac n4 -C_A-\lambda-\left(\frac12-\frac{q}{1+f_\theta}\right)^2 f\Bigr)|\tilde u_\theta|^2\,dv \\
= & \int_M\left( \frac{qf(1+f_\theta - q)}{(1+f_\theta)^2} - \frac n4 -C_A-\lambda \right)|\tilde u_\theta|^2\,dv
\end{split}
\end{equation}
Now examine the limit as $f\to\infty$ and $\theta\to0$.  Since $f_\theta=f/(1+\theta f)$, we have $f/(1+f_\theta)\to1$.  The expression in the large parentheses tends to $q - \tfrac n 4 -C_A - \lambda$. 

Denote $b = \tfrac n 4 + C_A + \lambda$. Choose $q = \sigma + b$ as in the statement. Let $R_\lambda : = k b/\sigma$ for some $k > 1$ to be determined later. Since $\tilde u$ is bounded on $\{f < R_\lambda\}$,  
\[
\begin{split}
\lim_{\theta \to 0} \int_{\{ f < R_\lambda\}} \left( \frac{qf(1+f_\theta - q)}{(1+f_\theta)^2} - b \right)|\tilde u_\theta|^2\,dv = &  \int_{\{f < R_\lambda\}} \left(  \frac{qf(1+f-q)}{(1+f)^2} - b\right) \varphi(f)^2|\tilde u|^2 \,dv. \\
\end{split}
\]
Then by \eqref{eq:L2estimatetheta-polynomial} and Fatou's lemma, we have
\[
\int_{\{f \ge R_\lambda\}} \left( \frac{qf(1+f-q)}{(1+f)^2} - b\right)  \varphi(f)^2 |\tilde u|^2 \,dv \leq  \int_{\{f < R_\lambda\}} \left( b - \frac{qf(1+f-q)}{(1+f)^2} \right)_+   \varphi(f)^2 |\tilde u|^2 \, dv.
\]
Note that  $\varphi(f)^2 |\tilde u|^2 = (1+f)^{-2q} |u|^2$. On $\{f \geq R_\lambda = k b/\sigma \}$, we want 
\[
\frac{qf(1+f-q)}{(1+f)^2} - b \ge \frac{\sigma}{2}.
\]
Since $0 < \sigma \le \tfrac{n}{8}$ and $b \ge \tfrac n 4$, we always have $\tfrac{\sigma}{b} \le \tfrac 12$, hence $R_\lambda \ge 2k$. It suffices to have 
\[
q\, \frac{2k}{1+2k} \left( 1 - \frac{q}{1+2k}\right) >  \frac{q}{1+1/(2k)} \left( 1 - \frac{q}{2k}\right) \ge b + \frac{\sigma }{2}.
\]
Solving the last inequality yields $k \ge \sigma^{-1} (q^2 + b + \sigma /2)$, therefore it suffices to take $k = 2b^2 /\sigma$, and $R_\lambda = 2 b^3/\sigma^2$.

On $\{f < R_\lambda\}$, we have $\left( b - \frac{qf(1+f-q)}{(1+f)^2} \right)_+ \leq (b + \sigma)^2$. Hence we obtain the estimate in the lemma. 
\end{proof}

\begin{corollary}[Polynomial growth]\label{cor:polynomialgrowth}
Let $(M^n, g, f)$ be a complete gradient Ricci shrinker, let $d\mu = e^{-f} dv$ be the weighted measure. Let $u \in W^{1,2}(d\mu)$ be a section satisfying $\langle u, Lu \rangle \le \lambda |u|^2$ for the operator $L$ in \eqref{eq:L}. Then for any $\sigma > 0$, we have 
\[
|u|(x) \le C(n) \, \sigma^{-1} \, e^{-\bm \mu} \,  (1+\lambda + C_A)^2 \,  \|u\|_{L^2(\{f < R_\lambda \};\, dv)} \,  (1+f(x))^{n/2 + C_A + \lambda + \sigma},
\]
where $C_A$ and $R_\lambda$ are the same constants as in Lemma \ref{lem:agmon-shrinker-polynomial},  $\bm \mu$ is Perelman's entropy \eqref{eq:entropy}. 
\end{corollary}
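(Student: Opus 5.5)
The plan is to combine the weighted $L^2$ bound of Lemma~\ref{lem:agmon-shrinker-polynomial} with a local mean-value inequality for $|u|$ on balls of radius $\sim f(x)^{-1/2}$ centered at $x$.

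\textbf{Step 1: subsolution.} Using Kato's inequality, $|u|$ satisfies $\Delta_f|u|\ge -(\lambda+C_A)|u|$ weakly. Strictly, $\langle u,Lu\rangle\le\lambda|u|^2$ gives
\[
\tfrac12\Delta_f|u|^2 = |\nabla u|^2 - \langle u,Lu\rangle + \langle Au,u\rangle ,
\]
so $\tfrac12\Delta_f|u|^2 \ge |\nabla u|^2-(\lambda+C_A)|u|^2$. Combined with $|\nabla|u||\le|\nabla u|$, this yields $\Delta_f|u|\ge-a|u|$ with $a:=\lambda+C_A+\tfrac12\ge\tfrac12$. To avoid dividing by $|u|$, I would work with $(|u|^2+\delta)^{1/2}$ and let $\delta\to0$, or alternatively run the Moser iteration of Lemma~\ref{lem:meanvalue-sharp} directly on $|u|^2$, whose proof only uses the differential inequality.

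\textbf{Step 2: mean value.} Fix $x$ and choose $k\ge1$ minimal with $f(x)\le ka$, so $ka\le f(x)+a$. Set
\[
r=\tfrac12 C_0^{-1}(ka)^{-1/2}\sim(1+f(x)+a)^{-1/2}.
\]
Lemma~\ref{lem:meanvalue-sharp} then gives
\[
|u|(x)^2\le \frac{C_m}{\Vol_g(B_g(x,r))}\int_{B_g(x,r)}|u|^2\,dv .
\]
By Lemma~\ref{lem:vollower} (applied with $\lambda=ka$; the radius is admissible), $\Vol_g(B)\ge c(n)e^{\bm\mu}r^n$. Hence
\[
|u|(x)^2\le C(n)e^{-\bm\mu}(1+f(x)+a)^{n/2}\int_B|u|^2\,dv .
\]

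\textbf{Step 3: inserting the Agmon bound.} On $B$, Lemma~\ref{lem:f-control} gives $|f-f(x)|\le2$, so $(1+f)^{2q}\le C(1+f(x))^{2q}$ with $C$ depending on $q$. This $q$-dependence is the delicate point: $(1+f(x)+2)^{2q}/(1+f(x))^{2q}\le 3^{2q}$ is exponential in $\lambda$, not polynomial. I expect this constant bookkeeping to be the main obstacle. To handle it, I would split into two regimes.
\begin{itemize}
\item \emph{Large $f(x)$:} if $f(x)\ge q$, then $(1+2/(1+f(x)))^{2q}\le e^{4}$.
\item \emph{Small $f(x)$:} if $f(x)\le R_\lambda$, or more generally $f(x)\le\max(q,R_\lambda)$, bound $\int_B|u|^2$ directly by $\|u\|^2_{L^2(\{f<R_\lambda+2\})}$. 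Here I would enlarge $R_\lambda$ slightly, or note that $B\subset\{f<R_\lambda\}$ after a harmless adjustment.
\end{itemize}
Since $R_\lambda\ge q$ in the relevant range, the large-$f(x)$ case only concerns $B\subset\{f>R_\lambda\}$ up to the boundary layer. There Lemma~\ref{lem:agmon-shrinker-polynomial} gives
\[
\int_B|u|^2dv\le C(1+f(x))^{2q}\int_{\{f>R_\lambda\}}(1+f)^{-2q}|u|^2dv\le C\sigma^{-1}(b+\sigma)^2(1+f(x))^{2q}\|u\|^2_{L^2(\{f\le R_\lambda\})}.
\]

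\textbf{Step 4: collecting exponents.} Substituting into Step 2 and taking square roots, the exponent of $1+f(x)$ is $n/4+q=n/2+C_A+\lambda+\sigma$, as claimed. The prefactor is
\[
(b+\sigma)\sigma^{-1/2}(1+a)^{n/4}e^{-\bm\mu/2}.
\]
The stated form $C(n)\sigma^{-1}e^{-\bm\mu}(1+\lambda+C_A)^2$ is cruder. To reach it, I would absorb the $(1+a)^{n/4}$ factor into a marginal increase of $\sigma$ in the exponent, using $1+a\le C(1+f(x))$ when $f(x)\ge q\ge a-1$. I would also use $e^{-\bm\mu/2}\le Ce^{-\bm\mu}$, which holds because $\bm\mu$ is bounded above by the Gaussian value $0$. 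For $\sigma>n/8$, I would simply apply the lemma with $\sigma=n/8$.
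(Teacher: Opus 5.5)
Your Steps 1--3 in the regime $f(x)\ge R_\lambda$ are exactly the paper's proof: Kato, Lemma~\ref{lem:meanvalue-sharp} on $B=B_g(x,\tfrac12C_0^{-1}f(x)^{-1/2})\subset\Omega_{f(x)+2}$, Lemma~\ref{lem:vollower}, and Lemma~\ref{lem:agmon-shrinker-polynomial} on $B$. There you are more careful than the paper. Your bound $\bigl(1+\tfrac{2}{1+f(x)}\bigr)^{2q}\le e^4$, valid since $R_\lambda=2b^3/\sigma^2\ge 8b\ge q$, justifies the paper's silent replacement of $(f(x)+2)^{2q}$ by $(1+f(x))^{2q}$. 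Your choice $a=\lambda+C_A+\tfrac12$ also secures $a\ge\tfrac12$.

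Step 4, however, is wrong as written. The prefactor $(1+a)^{n/4}$ is an artifact of bounding $ka\le 1+f(x)+a$. For $f(x)\ge a$ the minimal $k$ gives $ka=f(x)$, so no $a$ appears and the exponent $n/4+q$ comes out directly. Your proposed absorption would not work anyway: $1+a\le C(1+f(x))$ turns $(1+a)^{n/4}$ into $(1+f(x))^{n/4}$. That raises the exponent to $\tfrac{3n}{4}+C_A+\lambda+\sigma$, which is not a marginal change of $\sigma$.

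The genuine gap is the regime $f(x)<a-1$, which your proposal claims to cover. There Lemma~\ref{lem:meanvalue-sharp} needs $ka\ge a$, so $r\lesssim a^{-1/2}$. Lemma~\ref{lem:vollower} then leaves a factor $e^{-\bm{\mu}/2}a^{n/4}$ in $|u|(x)$ that nothing in the target absorbs. Take a point with $f(x)=0$ (the origin of the Gaussian shrinker, where $\bm{\mu}=0$) and $\sigma=n/8$. The claimed right-hand side is then $C(n)(1+\lambda+C_A)^2\|u\|_{L^2(\{f<R_\lambda\};\,dv)}$, and $a^{n/4}\le C(n)(1+\lambda+C_A)^2$ fails for $n>8$ once $\lambda+C_A$ is large.

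Your remark about enlarging $R_\lambda$ addresses only the domain of the $L^2$ norm, and that part is in fact harmless. For $f(x)<R_\lambda-2$ one has $B\subset\Omega_{R_\lambda}$ by Lemma~\ref{lem:f-control}. For $f(x)\ge R_\lambda-2$ you split $B$ and apply the Agmon bound to $B\cap\{f>R_\lambda\}$. Neither step touches the $a^{n/4}$ loss.

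The paper avoids the issue by proving the estimate only for $x\in M\setminus\Omega_{R_\lambda}$, where $f(x)>R_\lambda\ge a$. With corrected bookkeeping your argument reaches $f(x)\ge a-1$. On $\{f<a-1\}$, however, this method gives only $|u|(x)\le C(n)e^{-\bm{\mu}/2}a^{n/4}\|u\|_{L^2(\{f<R_\lambda\};\,dv)}$. That suffices for Theorem~\ref{thm:polynomial-growth}, whose constant may depend on $\lambda$, but not for the explicit prefactor in the statement.
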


\begin{proof}
Let $R_\lambda$ be the same constant as in Lemma \ref{lem:agmon-shrinker-polynomial}. For any $x\in M \setminus \Omega_{R_\lambda}$, set $R = f(x)$, $\Omega_R = \{f < R\}$ and $r = \tfrac12 e^{-2/(n-2)} /\sqrt{R}$. By Lemma \ref{lem:f-control}, $B_g(x, r) \subset \Omega_{R+2}$. Then Lemma \ref{lem:agmon-shrinker-polynomial} implies
\[
(R+2)^{-2q}\int_{B_g(x, r)} |u|^2 dv \le 2 \sigma^{-1} (n + \lambda + C_A)^2 \|u\|^2_{L^2(\Omega_{R_\lambda}; dv)}.
\]
By Kato's inequality we have $\Delta_f |u| \ge -(\lambda + C_A) |u|$. Then Lemma \ref{lem:meanvalue-sharp} implies that
\[
|u|^2(x) \le \frac{C(n) \, (1+\lambda + C_A)^2}{\sigma \Vol_g(B_g(x, r))} \|u\|^2_{L^2(\Omega_{R_\lambda};\, dv)} (R+2)^{2q}.
\]
The result then follows from applying Lemma \ref{lem:vollower}.
\end{proof}

\begin{proof}[Proof of Theorem~\ref{thm:polynomial-growth}]
The theorem follows from Corollary \ref{cor:polynomialgrowth} and \eqref{eq:growthf}.
\end{proof}

% =====================================================
\section{Spectral counting function estimate with sharp order}
\label{sec:spectralcounting}
In this section we derive an upper bound for the counting function of the operator $L$ in the setting of section \ref{sec:agmon}. We restate Theorem~\ref{thm:intro-counting} as:
\begin{theorem}\label{thm:sharp}
Let $(M^n,g,f)$ be a complete gradient Ricci shrinker.
Let $E\to M$ be a Hermitian vector bundle with compatible connection, and let $L=-\Delta+\D_{\D f}+A$ where $A$ satisfies
$\langle A(u),u\rangle \ge -C_A |u|^2$ for some $C_A\ge 0$.  Assume:
\begin{enumerate}
\item[(H2)] There exists $\beta \in [1,n]$, such that for all
$\lambda > 0$,
\[
\Vol_g(\Omega_\lambda) \le C(n) \,\lambda^{\,\beta/2},
\qquad \Omega_\lambda := \{x\in M : f(x) \le \lambda\}.
\]
\end{enumerate}
Then the spectral counting function satisfies
\begin{equation}\label{eq:mainbound}
\mathcal{N}_L(\lambda)\le C(n)\, \rank(E)\,(1+\lambda+C_A)^\frac{n+\beta}{2},\qquad\lambda\ge0.
\end{equation}
\end{theorem}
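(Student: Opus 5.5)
We work with the conjugated Schr\"odinger operator $\widetilde L=-\Delta+V_f+A$ on $L^2(dv,\Gamma(E))$ from Lemma~\ref{lem:conjugation}. Since $U$ is unitary, $\mathcal N_L(\lambda)=\mathcal N_{\widetilde L}(\lambda)$. Set $\Lambda:=1+\lambda+C_A$ and let $W$ be the span of the eigensections of $\widetilde L$ with eigenvalue $\le\lambda$. Every $\tilde u\in W$ satisfies $\langle \tilde u,\widetilde L\tilde u\rangle\le\lambda\|\tilde u\|^2$ in the quadratic-form sense, so the integrated version of Lemma~\ref{lem:agmon-shrinker} applies to it. Take $\epsilon=\tfrac14$. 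Then $R:=R_\lambda\le C(n)\Lambda$, and
\[
\int_{\{f>R\}}|\tilde u|^2dv\le e^{-R/2}\int_{\{f>R\}}e^{f/2}|\tilde u|^2dv\le R\int_{\{f\le R\}}|\tilde u|^2dv .
\]
Enlarging $R$ to $R'=C(n)\Lambda$ should make the outer mass a small fraction of the total. A cleaner route is to use $\epsilon$ small and the weight directly on the shell $\{f>2R\}$. There the factor $e^{(1-2\epsilon)f}$ exceeds $e^{(1-2\epsilon)R}\cdot e^{(1-2\epsilon)R}$, which gives
\[
\int_{\{f>2R\}}|\tilde u|^2\le Re^{-(1-2\epsilon)R}\|\tilde u\|^2\le\tfrac12\|\tilde u\|^2,
\]
since $R\ge C$ large. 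Hence $\|\tilde u\|_{L^2(\Omega_{2R})}^2\ge\frac12\|\tilde u\|^2$ for all $\tilde u\in W$.

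Next comes an IMS localization. Set $r=c(n)\Lambda^{-1/2}$, so that $r\le\tfrac12C_0^{-1}(2R)^{-1/2}$. Choose a maximal $r$-separated set $\{x_i\}_{i=1}^N$ in $\Omega_{2R}$. By Lemma~\ref{lem:vollower}, Lemma~\ref{lem:volupper} and the doubling of Lemma~\ref{lem:doubling}, the balls $B_i=B_g(x_i,2r)$ cover $\Omega_{2R}$ with overlap at most $C(n)$. Using Lemma~\ref{lem:volupper} together with (H2) then gives
\[
N\le C\,r^{-n}\Vol_g(\Omega_{2R+2})\le C(n)\Lambda^{n/2}\Lambda^{\beta/2}.
\]
Build a partition $\sum_i\chi_i^2+\chi_\infty^2=1$ with $\chi_i$ supported in $B_g(x_i,4r)$, $\chi_\infty$ supported in $\{f>R\}$, and $\sum|\nabla\chi|^2\le C/r^2\le C\Lambda$. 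The IMS formula gives
\[
\langle\widetilde L\tilde u,\tilde u\rangle=\sum_i\langle\widetilde L\chi_i\tilde u,\chi_i\tilde u\rangle+\langle\widetilde L\chi_\infty\tilde u,\chi_\infty\tilde u\rangle-\sum\int|\nabla\chi|^2|\tilde u|^2.
\]
On $\supp\chi_\infty$ we have $V_f+A\ge \frac f4-\frac n4-C_A$, which exceeds $\lambda+C\Lambda+1$ if $R$ is chosen large enough. On each ball, $V_f+A\ge -\frac n4-C_A$. It follows that on $W$ the form $\sum_i\langle(-\Delta-\tfrac n4-C_A-C\Lambda)\chi_i\tilde u,\chi_i\tilde u\rangle$ is at most $0$ on the complement of an explicitly controlled piece. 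The standard min-max consequence is
\[
\dim W\le\sum_i \mathcal N^{D}_{B_g(x_i,4r)}\bigl(-\Delta\otimes\mathrm{id}_E;\,C(n)\Lambda\bigr),
\]
the Dirichlet counting function of the rough Laplacian on $E$ over each ball at level $C\Lambda$.

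It remains to show that each local Dirichlet count is bounded by $C(n)\rank(E)$. This is the Cheng--Li step and the main obstacle. On $B=B_g(x_i,4r)$, Proposition~\ref{prop:Sobolev} gives the Sobolev inequality with constant $C_Sr^2/\Vol(B)^{2/n}$, applied to $|s|$ via Kato's inequality. Nash--Moser iteration then gives the Dirichlet heat kernel bound
\[
\tr H^D_B(t,x,x)\le \rank(E)\,C\,\Vol(B)^{-1}(r^2/t)^{n/2},\qquad t\le r^2 .
\]
Integrating over $B$ yields $\sum_k e^{-t\mu_k}\le C\rank(E)(r^2/t)^{n/2}$. Taking $t=r^2$ gives $\#\{\mu_k\le C\Lambda\}\le e^{C\Lambda r^2}C\rank(E)\le C(n)\rank(E)$, because $\Lambda r^2=c(n)^2$. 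Multiplying by $N$ yields \eqref{eq:mainbound}. The delicate points are the bundle-valued heat kernel domination (handled by Kato's inequality and the semigroup domination $|e^{t\Delta}s|\le e^{t\Delta}|s|$) and the bookkeeping in the IMS step that absorbs the localization error $C\Lambda$ and the tail region into the same level $C(n)\Lambda$.
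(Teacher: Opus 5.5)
Your architecture is the paper's: conjugation to $\widetilde L$, Agmon localization, a cover of the allowed region by balls of radius $\sim\Lambda^{-1/2}$ counted via (H2), an IMS/min-max reduction to Dirichlet problems on the balls, and a Cheng--Li type trace bound from Proposition~\ref{prop:Sobolev} and Kato domination. Your comparison operator, the Dirichlet rough Laplacian $-\Delta\otimes\mathrm{id}_E$, is the right one, since only $\int|\nabla w|^2$ is controlled and $A$ is not bounded above.

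The step that fails as written is the tail piece $\chi_\infty$. On the overlap zone $\{R<f\le 2R\}$ both $\chi_\infty$ and the ball cutoffs are active. To drop the outer term you therefore need $\tfrac R4-\tfrac n4-C_A\ge\lambda+Cr^{-2}+1$ there. Your constraint $r\le\tfrac12C_0^{-1}(2R)^{-1/2}$ gives $Cr^{-2}\ge 8C_0^2CR$, which grows at the same rate as $R$. So ``choose $R$ large enough'' is circular: no free parameter lets the potential beat the localization error of cutoffs varying on scale $r$. There are two ways out.
(a) Drop $\chi_\infty$ and argue as the paper does. Localization makes $\tilde u\mapsto(\eta_j\tilde u)_j$ injective with $\sum_j\|\eta_j\tilde u\|^2\ge\tfrac34\|\tilde u\|^2$. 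The energy is then bounded by the global estimate $\|\nabla\tilde u\|^2\le(\lambda+\tfrac n4+C_A)\|\tilde u\|^2$ plus the overlap term $CKr^{-2}\|\tilde u\|^2$.
(b) Factor the partition as $\chi_i=\psi_0(f)\eta_i$ and $\chi_\infty=\psi_\infty(f)$. Here $\psi_0^2+\psi_\infty^2=1$ switches between $f=R$ and $f=2R$, and $\sum_i\eta_i^2=1$ on $\Omega_{2R}$. IMS then reads $\langle\widetilde L\tilde u,\tilde u\rangle=\sum_i\langle(\widetilde L-W_0-W_1)\chi_i\tilde u,\chi_i\tilde u\rangle+\langle(\widetilde L-W_0)\chi_\infty\tilde u,\chi_\infty\tilde u\rangle$, with $W_0=|\nabla\psi_0|^2+|\nabla\psi_\infty|^2\lesssim 1/R$ and $W_1=\sum_i|\nabla\eta_i|^2\lesssim r^{-2}$. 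The outer piece then sits above $\lambda$ once $R=C(n)\Lambda$.

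Route (b) is worth taking because your first step has a separate gap, which the paper's Lemma~\ref{lem:localization} also passes over. The quadratic-form bound $\langle\widetilde L\tilde u,\tilde u\rangle\le\lambda\|\tilde u\|^2$ does not imply the weighted inequality $\int\varphi(f_\theta)^2\langle(\widetilde L-\lambda)\tilde u,\tilde u\rangle\,dv\le0$ on which the proof of Lemma~\ref{lem:agmon-shrinker} rests. For $\tilde u=\sum_i c_i\tilde u_i$ the cross terms $c_ic_j(\lambda_i-\lambda)\int\varphi^2\langle\tilde u_i,\tilde u_j\rangle\,dv$ have no sign. With the factored partition no Agmon estimate is needed. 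For $\tilde u\in W$ one gets $\sum_i\|\nabla(\chi_i\tilde u)\|^2\le C(n)\Lambda\sum_i\|\chi_i\tilde u\|^2-\|\chi_\infty\tilde u\|^2$. This gives both injectivity of $\tilde u\mapsto(\chi_i\tilde u)_i$ and the Rayleigh bound needed for min-max.

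If you prefer route (a), apply the Agmon lemma only to orthonormal eigensections. The compression of $1_{\Omega}$ to $W$ then has trace $\ge\tfrac34\dim W$ and norm $\le1$, hence at least $\tfrac12\dim W$ eigenvalues $\ge\tfrac12$. Running the argument on their span costs only a factor $2$.

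Finally, bounding $N$ from above needs the noncollapsing Lemma~\ref{lem:vollower}, not Lemma~\ref{lem:volupper}. This puts a factor $e^{-\bm\mu}$ into the constant, which the paper's own count also carries through $v=c(n)e^{\bm\mu}$.
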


The remainder of this section is devoted to the proof.
Fix $\lambda\ge0$ and set
\begin{equation}\label{eq:adef}
a:=2(\lambda+C_A + 1).
\end{equation}
Let $E_\lambda:=\operatorname{span}\{u_i:\lambda_i\le\lambda\}$ where $u_i$ are the eigensections satisfying $Lu_i = \lambda_i u_i$, $i = 1,2,...$; then $\dim E_\lambda=\mathcal{N}_L(\lambda)$.
For $u\in E_\lambda$ write $\tilde u:=e^{-f/2}u$ for the conjugated section, which satisfies \eqref{eq:eigentransformed}. Since $L$ and $\tilde L$ are isospectral, we will estimate $\mathcal{N}_{\tilde L}(\lambda)$ instead.

% ============================================================
\medskip\noindent
\textbf{Localization to classically allowed region.}
For each eigensection $L u_i = \lambda_i u_i$, it is straight forward to check that $u_i \in W^{1,2}(d\mu)$. Hence the Agmon estimate in Lemma~\ref{lem:agmon-shrinker} applies to $u\in E_\lambda$. We now convert the estimate into an $L^2(dv)$ localization for $\tilde u$.

\begin{lemma}[$L^2$-Localization]\label{lem:localization}
For any $\varepsilon\in(0,1)$ there exists $R=C(n,\varepsilon)a$ such that for every $\tilde u\in \tilde E_\lambda$,
\begin{equation}\label{eq:truncation}
\int_{\{f\ge R\}}|\tilde u|^2\,dv \le\varepsilon\int_M|\tilde u|^2\,dv.
\end{equation}
\end{lemma}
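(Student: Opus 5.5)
The plan is to apply Lemma~\ref{lem:agmon-shrinker} to an arbitrary $u\in E_\lambda$ and then push the localization threshold further out so that the exponential weight absorbs the constant. First we check the hypothesis. For $u=\sum_{\lambda_i\le\lambda}c_iu_i$, orthogonality of eigensections gives $\int\langle u,Lu\rangle\,d\mu\le\lambda\int|u|^2d\mu$, which holds in integrated form rather than pointwise. The proof of Lemma~\ref{lem:agmon-shrinker} only uses the integrated inequality, tested against the bounded weight $\varphi(f_\theta)^2$. To make this rigorous, I would note that $\varphi(f_\theta)^2u$ belongs to the domain and that the integrated quadratic form inequality $\int\langle Lu,\psi^2u\rangle\,d\mu\le\lambda\int\psi^2|u|^2d\mu$ persists for $u\in E_\lambda$. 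This requires care, since the cross terms between different eigensections do not obviously have a sign. My first attempt would be to replace $\langle u,Lu\rangle\le\lambda|u|^2$ by $Lu=\sum\lambda_ic_iu_i$ and bound the relevant term directly. If that fails, I would instead rerun the Agmon argument with $(L-\lambda)u$ estimated in $L^2$.

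With $\epsilon=\tfrac14$, so that $1-2\epsilon=\tfrac12$, Lemma~\ref{lem:agmon-shrinker} gives
\[
\int_{\{f>R_0\}}e^{f/2}|\tilde u|^2dv\le R_0e^{R_0/2}\int_M|\tilde u|^2dv,
\qquad R_0=8\left(1+\tfrac n4+C_A+\lambda\right)\le C(n)a .
\]
For $R\ge R_0$ this yields
\[
\int_{\{f\ge R\}}|\tilde u|^2dv\le e^{-R/2}\int_{\{f\ge R\}}e^{f/2}|\tilde u|^2dv\le R_0e^{(R_0-R)/2}\int_M|\tilde u|^2dv .
\]
We choose $R=R_0+2\log(R_0/\varepsilon)$. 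Since $R_0\ge 8$ and $a\ge2$, we have $\log(R_0/\varepsilon)\le C(\varepsilon)R_0$, so $R\le C(n,\varepsilon)a$. Then $R_0e^{(R_0-R)/2}=\varepsilon$, which is exactly \eqref{eq:truncation}. If a slightly different normalization is preferred, one can alternatively take $\epsilon=\tfrac12-\delta$ with $\delta$ small. In either case all constants are linear in $a$ up to logarithms, and these are absorbed into $C(n,\varepsilon)a$.

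The main obstacle is the first step: justifying the Agmon estimate for linear combinations in $E_\lambda$ instead of a single eigensection. The most robust route is to establish the Agmon inequality for sections with $\|(\tilde L-\lambda)\tilde u\|$ controlled. A simpler alternative is to observe that in the proof of Lemma~\ref{lem:agmon-shrinker} one only needs
\[
\int\langle(\tilde L-\lambda)\tilde u,\varphi^2\tilde u\rangle\,dv\le0 ,
\]
and to verify this via the spectral decomposition with a weight that is monotone in $f$. Once that is secured, the rest of the argument is the elementary computation above.
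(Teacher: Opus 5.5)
Your exponential-tail computation matches the paper's proof: apply Lemma~\ref{lem:agmon-shrinker} with $\epsilon=\tfrac14$, then push $R$ past $R_0$. Your choice $R=R_0+2\log(R_0/\varepsilon)$ is fine. You are also right that the real issue is applying that lemma to linear combinations; the paper does this without comment, justifying only $u_i\in W^{1,2}(d\mu)$. However, your proposal does not close this step, and the ``simpler alternative'' you suggest is false. For $\tilde u=\sum_i c_i\tilde u_i$,
\[
\int_M\langle(\tilde L-\lambda)\tilde u,\varphi^2\tilde u\rangle\,dv=-\sum_{i,j}(\lambda-\lambda_i)\,c_i\bar c_j\int_M\varphi^2\langle\tilde u_i,\tilde u_j\rangle\,dv .
\]
Once $\varphi$ is nonconstant, the weighted Gram matrix is in general not diagonal, so this form is indefinite; monotonicity of $\varphi$ in $f$ does not help. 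Concretely, on the Gaussian shrinker $f=|x|^2/4$ take $\lambda=1$ and $u=3-x_1^2=1-(x_1^2-2)\in E_1$. Then $(\tilde L-1)\tilde u=-e^{-f/2}$, and with $\epsilon=\tfrac14$ the integral tends, as $\theta\to0$, to $-\int_{\mathbb{R}^n}e^{-|x|^2/8}(3-x_1^2)\,dx=(8\pi)^{n/2}>0$. Your route via $\|(\tilde L-\lambda)\tilde u\|$ is not spelled out and meets the same obstruction, since it needs weighted control of $(\tilde L-\lambda)\tilde u$, which is another element of the span. Applying the lemma eigensection by eigensection and summing costs a factor $\dim E_\lambda=\mathcal N_L(\lambda)$, the very quantity being estimated, so it does not give $R=C(n,\varepsilon)a$.

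The missing idea is that no exponential weight is needed. The unweighted integrated inequality does hold on the whole span $\{e^{-f/2}u:u\in E_\lambda\}$, because the $\tilde u_i$ are orthonormal in $L^2(dv)$:
\[
\int_M\bigl(|\nabla\tilde u|^2+V_f|\tilde u|^2+\langle A\tilde u,\tilde u\rangle\bigr)\,dv=\sum_i\lambda_i|c_i|^2\le\lambda\int_M|\tilde u|^2\,dv .
\]
The integration by parts is justified by the cutoff argument at the start of the proof of Lemma~\ref{lem:agmon-shrinker}, which gives $\tilde u\in W^{1,2}(dv)$. For the potential term, which is bounded below by $-(\tfrac n4+C_A)|\tilde u|^2$, use monotone convergence. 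Since $V_f=\tfrac14(f+\Sc-n)\ge\tfrac14(f-n)$ and $\langle A\xi,\xi\rangle\ge-C_A|\xi|^2$, dropping the gradient term gives
\[
\int_M f|\tilde u|^2\,dv\le(4\lambda+n+4C_A)\int_M|\tilde u|^2\,dv .
\]
Chebyshev's inequality then yields \eqref{eq:truncation} with $R=(4\lambda+n+4C_A)/\varepsilon\le C(n)a/\varepsilon$. This has worse $\varepsilon$-dependence than the Agmon route's $\log(1/\varepsilon)$, which is irrelevant since the lemma is only used with $\varepsilon=\tfrac14$.
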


\begin{proof}
From Lemma~\ref{lem:agmon-shrinker} with $\epsilon = \tfrac14$,
$b=\frac n4+C_A+\lambda$, $R_\lambda=8(b+1)$, we have
\[
\int_{\{f>R_\lambda\}} e^{ f/2 } |\tilde u|^2\,dv
\le R_\lambda e^{R_\lambda/2} \int_{\{f\le R_\lambda\}}|\tilde u|^2\,dv.
\]
Normalize $\|\tilde u\|_{L^2(dv)} = \|u\|_{L^2(d\mu)}=1$, so the right-hand side is at most $R_\lambda e^{R_\lambda/2}$.
For any $R\ge R_\lambda$,
\[
\int_{\{f>R\}}|\tilde u|^2\,dv \leq e^{-R/2} R_\lambda e^{R_\lambda /2}.
\]
It suffices to take $R \ge 2 (R_\lambda - \ln \varepsilon)$ to make the RHS above less than $\varepsilon$. We can take $R = \max\{8n , - \ln \varepsilon \} (1+\lambda + C_A)$.
The conclusion then follows.
\end{proof}

For the rest of the proof we fix $\varepsilon=1/4$ and let $R=C'(n)a$ be the corresponding truncation radius from Lemma~\ref{lem:localization}.
Define $\Omega:=\{f<R\}$.  From~\eqref{eq:truncation} we obtain
\begin{equation}\label{eq:tilde-trunc}
\|\tilde u\|_{L^2(\Omega,dv)}^2\ge\frac34\|\tilde u\|_{L^2(M,dv)}^2.
\end{equation}
Hence the restriction map
\[
\mathcal{R}: E_\lambda\longrightarrow L^2(\Omega,dv,E),\qquad
\mathcal{R}(u)=\tilde u|_\Omega
\]
is injective.  Set $\widetilde E_\lambda:=\mathcal{R}(E_\lambda)$; then $\dim\widetilde E_\lambda=\mathcal N_{\tilde L} (\lambda)$.

% ============================================================
\medskip\noindent
\textbf{Covering of the allowed region}
Choose a small constant $\rho_0=\rho_0(n)>0$ such that $\rho_0\le\frac14 e^{-2/(n-2)}$ and the Sobolev inequality
of Proposition~\ref{prop:Sobolev} holds on all balls $B_g(q,r)$ with $q\in\Omega_{ka}$, $k\ge1$, $a \ge1$,
and $r\le\rho_0/\sqrt{ka}$.
Set
\begin{equation}\label{eq:radius}
r_0:=\frac{\rho_0}{\sqrt{R}},\qquad r_1:=\frac{r_0}{2}.
\end{equation}
Recall that $R= C' a$ for some constant $C'$ depending only on $n$, without loss of generality we can take $C' \ge 1$, so $r_0\le \frac{1}{4} e^{-2/(n-2)}/\sqrt{C'a}$ and all local estimates apply on balls of radius $r_0$ centered in $\Omega = \Omega_R$.

Choose a maximal set of disjoint balls $\{B_g(x_j,r_1)\}_{j=1}^N$ with centers $x_j\in\Omega$.  By maximality, the enlarged balls
$\hat B_j:=B_g(x_j,r_0)$ cover $\Omega$. Each $B_g(x_j,r_1)$ is contained in $\{f\le R+2\}$ by Lemma~\ref{lem:f-control}.  Using the assumption $(H2)$,
\[
\Vol_g(\{f\le R+2\})\le C(n)R^{\beta/2}.
\]
Together with Lemma~\ref{lem:vollower}, we have
\[
N\, v\,r_1^{\,n}
\le\sum_{j=1}^N\Vol_g(B_g(x_j,r_1))
\le\Vol_g(\{f\le R+2\})\le C(n)R^{\beta/2}.
\]
Since $r_1=\rho_0/(2\sqrt{R})$ and $R=C' a$, we obtain
\begin{equation}\label{eq:coverNumber}
N\le C(n)\,\frac{R^{\beta/2}}{r_1^{\,n}}
\le C(n)\,(1+\lambda+C_A)^\frac{n+\beta}{2}.
\end{equation}

% ============================================================
\medskip\noindent
\textbf{Partition of unity and direct sum embedding.}
Now we implement the IMS localization (\cite[Chapter 3]{CFKS}) in the form of a direct sum embedding with control of the energy.

The local doubling estimate in Lemma~\ref{lem:doubling} implies that the cover $\{\hat B_j\}$ has bounded overlap: each point of $\Omega$ lies in
at most $K=K(n)$ of the balls $\hat B_j$. By a standard construction, there exists a smooth partition of unity $\{\chi_j\}$ on $\Omega$ such that
\[
0\le\chi_j\le1,\quad
\supp(\chi_j)\subset\hat B_j,\quad
\sum_{j=1}^N\chi_j\equiv1\ \text{on}\ \Omega,\quad
|\nabla\chi_j|\le\frac{C_\chi}{r_0},
\]
with $C_\chi=C_\chi(n,K)$.  Set $\eta_j:=\sqrt{\chi_j}$; then
$\sum_{j=1}^N\eta_j^2\equiv1$ on $\Omega$,
$\supp(\eta_j)\subset\hat B_j$, and
$|\nabla\eta_j|\le C_\eta/r_0$ with $C_\eta=C_\eta(n)$.

Since $\supp(\eta_j)\subset\hat B_j$, for any $\tilde v\in\widetilde E_\lambda$ the product $\eta_j\tilde v$ is supported in the interior of $\hat B_j$ and
therefore belongs to $H^1_0(\hat B_j,dv,E)$.
Define the linear map
\[
\Phi:\widetilde E_\lambda\longrightarrow
\bigoplus_{j=1}^N H^1_0(\hat B_j,dv,E),\qquad
\Phi(\tilde v)=(\eta_1\tilde v,\dots,\eta_N\tilde v).
\]
The norm on the direct sum space is defined as
\[
\|(\tilde w_1, \tilde w_2, ..., \tilde w_N)\|_\oplus^2
:=\sum_{j=1}^N\int_{\hat B_j}|\tilde w_j|^2\,dv.
\]
\begin{lemma}[Isometric embedding]\label{lem:isometry}
For all $\tilde v\in\widetilde E_\lambda$, we have $\|\Phi(\tilde v)\|_\oplus^2 =\int_\Omega|\tilde v|^2\,dv$.
Consequently $\Phi$ is injective.
\end{lemma}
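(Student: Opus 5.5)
The plan is to compute the direct-sum norm directly from the definition and use the partition-of-unity identity $\sum_j \eta_j^2 \equiv 1$ on $\Omega$; injectivity then follows from norm preservation together with the injectivity of the restriction map $\mathcal{R}$.

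First, fix $\tilde v\in\widetilde E_\lambda$, which is a section on $\Omega$. By the definition of $\|\cdot\|_\oplus$ and the fact that $\supp(\eta_j)\subset\hat B_j$, we can write
\[
\|\Phi(\tilde v)\|_\oplus^2=\sum_{j=1}^N\int_{\hat B_j}\eta_j^2|\tilde v|^2\,dv=\sum_{j=1}^N\int_{\Omega}\eta_j^2|\tilde v|^2\,dv .
\]
Here the integrand is understood to vanish outside $\Omega$, since $\tilde v$ lives on $\Omega$ and $\eta_j$ is defined from the partition of unity on $\Omega$. The sum is finite, with $N$ bounded by \eqref{eq:coverNumber}, so we may interchange sum and integral and obtain $\int_\Omega\bigl(\sum_j\eta_j^2\bigr)|\tilde v|^2\,dv=\int_\Omega|\tilde v|^2\,dv$, using $\sum_j\eta_j^2\equiv1$ on $\Omega$.

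Next, for injectivity, suppose $\Phi(\tilde v)=0$. The norm identity then gives $\int_\Omega|\tilde v|^2\,dv=0$, so $\tilde v=0$ as an element of $L^2(\Omega,dv,E)$. Since $\Phi$ is linear, it is injective. Combining this with the injectivity of $\mathcal{R}$, which comes from \eqref{eq:tilde-trunc}, shows that the composite $\Phi\circ\mathcal{R}$ is injective on $E_\lambda$.

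There is no real obstacle; the only point needing care is bookkeeping. One must check that $\eta_j\tilde v$ is genuinely supported in $\hat B_j\cap\Omega$, so that the integrals over $\hat B_j$ and over $\Omega$ agree. One must also check that the partition-of-unity identity is used only on $\Omega$, which is exactly where $\tilde v$ is defined.
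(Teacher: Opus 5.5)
Your proof is correct and is essentially the paper's argument: integrating the pointwise identity $\sum_j\eta_j^2|\tilde v|^2=|\tilde v|^2$ on $\Omega$ gives the norm equality, and injectivity of the linear map $\Phi$ follows from it. Your extra observation that $\Phi\circ\mathcal{R}$ is injective on $E_\lambda$ is exactly what the paper relies on afterwards for the dimension count.
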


\begin{proof}
$\sum\eta_j^2\equiv1$ on $\Omega$ gives the pointwise identity $|\tilde v|^2=\sum_j\eta_j^2|\tilde v|^2$; integration yields the
norm equality.  Injectivity follows naturally.
\end{proof}

Let $\mathcal{E}_\oplus$ be the Dirichlet form on the direct sum:
\[
\mathcal{E}_\oplus((w_1,\dots,w_N))
:=\sum_{j=1}^N\int_{\hat B_j}|\nabla w_j|^2\,dv.
\]
The error term in the IMS localization caused by derivatives of the partition of unity is handled by the following energy estimate. 
\begin{lemma}[Energy bound]\label{lem:energy}
There is a constant $C_E=C_E(n)$, such that for every $\tilde v\in\widetilde E_\lambda$, 
\[
\mathcal{E}_\oplus(\Phi(\tilde v))
\le C_E\,a\,\|\Phi(\tilde v)\|_\oplus^2.
\]

\end{lemma}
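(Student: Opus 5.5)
The plan is to bound the localized energy crudely, using only the pointwise inequality for gradients of products, the bounded overlap of the cover, and a global energy bound for elements of $E_\lambda$. Concretely, we do not try to exploit the IMS cancellation of cross terms. The reason is that $\sum_j\eta_j^2\equiv1$ holds only on $\Omega$, while the balls $\hat B_j$ stick out of $\Omega$ into $\{f\le R+2\}$. Instead, for $\tilde v\in\widetilde E_\lambda$ we interpret $\eta_j\tilde v$ as $\eta_j\tilde u$, where $\tilde u=e^{-f/2}u$ is the full conjugated section on $M$ with $\mathcal R(u)=\tilde v$. Then pointwise
\[
|\nabla(\eta_j\tilde u)|^2\le 2\eta_j^2|\nabla\tilde u|^2+2|\nabla\eta_j|^2|\tilde u|^2 .
\]
We sum over $j$, using that each point lies in at most $K(n)$ of the $\hat B_j$, that $0\le\eta_j\le1$, and that $|\nabla\eta_j|\le C_\eta/r_0$. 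This gives
\[
\mathcal E_\oplus(\Phi(\tilde v))\le 2K\int_M|\nabla\tilde u|^2\,dv+\frac{2KC_\eta^2}{r_0^2}\int_M|\tilde u|^2\,dv .
\]
By \eqref{eq:radius}, $r_0^{-2}=R/\rho_0^2=C'a/\rho_0^2$, so the second term is already at most $C(n)\,a\,\|\tilde u\|^2_{L^2(M,dv)}$.

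The first term is handled by a global energy bound for $\tilde u$. Write $u=\sum_{\lambda_i\le\lambda}c_iu_i$. The spectral decomposition gives $\int_M\langle u,Lu\rangle\,d\mu\le\lambda\int_M|u|^2\,d\mu$. Conjugating by Lemma~\ref{lem:conjugation} and integrating by parts, which is justified since $\tilde u\in W^{1,2}(dv)$ as shown in the proof of Lemma~\ref{lem:agmon-shrinker}, we get
\[
\int_M|\nabla\tilde u|^2\,dv+\int_M\bigl(V_f|\tilde u|^2+\langle A\tilde u,\tilde u\rangle\bigr)\,dv\le\lambda\int_M|\tilde u|^2\,dv .
\]
Since $V_f=\tfrac14(f+\Sc-n)\ge-\tfrac n4$ and $\langle A\xi,\xi\rangle\ge -C_A|\xi|^2$, it follows that
\[
\int_M|\nabla\tilde u|^2\,dv\le\Bigl(\lambda+C_A+\tfrac n4\Bigr)\|\tilde u\|_{L^2(M,dv)}^2\le C(n)\,a\,\|\tilde u\|_{L^2(M,dv)}^2 .
\]

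Finally, we pass from $M$ to $\Omega$ using the localization \eqref{eq:tilde-trunc}: $\|\tilde u\|^2_{L^2(M,dv)}\le\tfrac43\|\tilde u\|^2_{L^2(\Omega,dv)}$. By Lemma~\ref{lem:isometry}, $\|\tilde u\|^2_{L^2(\Omega,dv)}=\|\Phi(\tilde v)\|_\oplus^2$. Combining the bounds yields $\mathcal E_\oplus(\Phi(\tilde v))\le C_E\,a\,\|\Phi(\tilde v)\|^2_\oplus$ with $C_E=C_E(n)$.

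The main point needing care is not the computation but the bookkeeping. First, the sections $\eta_j\tilde v$ must be understood via the globally defined $\tilde u$, since the supports of the $\eta_j$ leave $\Omega$. Second, because the energy outside $\Omega$ enters, the Agmon localization (Lemma~\ref{lem:localization} with $\varepsilon=\tfrac14$) is exactly what is needed to compare the $M$-norm with the $\oplus$-norm.
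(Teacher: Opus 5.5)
Your argument is correct and is essentially the paper's proof. It uses the same product-rule split, the bounded-overlap bound for the $|\nabla\eta_j|^2$ term with $r_0^{-2}=C'a/\rho_0^2$, the global bound $\int_M|\nabla\tilde u|^2\,dv\le(\lambda+\tfrac n4+C_A)\|\tilde u\|^2_{L^2(M,dv)}$ from $V_f\ge-\tfrac n4$ and $A\ge -C_A$, and the factor $\tfrac43$ from \eqref{eq:tilde-trunc}. The one difference is that you realize $\eta_j\tilde v$ as $\eta_j\tilde u$, which makes membership in $H^1_0(\hat B_j,dv,E)$ honest at the price of a harmless factor $K$ and of needing the overlap bound also on $\hat B_j\setminus\Omega$ (this follows from the same doubling argument centred at $x_j\in\Omega$). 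Under that reading, Lemma~\ref{lem:isometry} is no longer an equality, since $\sum_j\eta_j^2$ may be positive off $\Omega$; it only gives $\|\tilde u\|^2_{L^2(\Omega,dv)}\le\|\Phi(\tilde v)\|_\oplus^2$, but that is precisely the direction your last step uses, and it still yields injectivity of $\Phi$.
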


\begin{proof}
Let $\tilde v=\tilde u|_\Omega$ with $\tilde u\in \tilde E_\lambda$.
For each $j$,
\[
\int_{\hat B_j}|\nabla(\eta_j\tilde v)|^2\,dv
\le2\int_{\hat B_j}\eta_j^2|\nabla\tilde v|^2\,dv
+2\int_{\hat B_j}|\nabla\eta_j|^2|\tilde v|^2\,dv.
\]
Summing over $j$ and using $\sum\eta_j^2=1$ on $\Omega$,
\begin{equation}\label{eq:energySplit}
\mathcal{E}_\oplus(\Phi(\tilde v))
\le2\int_\Omega|\nabla\tilde v|^2\,dv
+2\frac{C_\eta^2}{r_0^2}\sum_{j=1}^N\int_{\supp(\eta_j)}|\tilde v|^2\,dv.
\end{equation}

For the first term on the RHS, use the eigenvalue equation.  Since $\tilde v$ is the restriction of $\tilde u$ on $\Omega$,  $\tilde u$
satisfies \eqref{eq:eigentransformed} and $V_f = f/4+ \Sc/4-n/4\ge-n/4$,
\[
\begin{split}
\int_\Omega|\nabla\tilde v|^2\,dv
\le\int_M|\nabla\tilde u|^2\,dv
=& \langle-\Delta\tilde u,\tilde u\rangle_{L^2(M,dv)} = \langle (\tilde L -V_f - A) \tilde u,\tilde u\rangle_{L^2(M,dv)} \\
\le &\bigl(\lambda+\tfrac n4+C_A\bigr)\int_M|\tilde u|^2\,dv
\le(\lambda+\tfrac n4+C_A)\cdot\frac43\|\tilde v\|_{L^2(\Omega,dv)}^2,
\end{split}
\]
where we used \eqref{eq:tilde-trunc} in the last step.
Since $a=2(\lambda+C_A+1)$, we have
$\lambda+\frac n4+C_A\le C(n)\,a$.

For the second term on the RHS of~\eqref{eq:energySplit}, since each $x\in\Omega$ belongs to at most $K$ of the $\hat B_j$, we have
\[
\sum_{j=1}^N\int_{\supp(\eta_j)}|\tilde v|^2\,dv
\le K\int_\Omega|\tilde v|^2\,dv
=K\|\Phi(\tilde v)\|_\oplus^2.
\]
With $r_0^{-2}=C'a/\rho_0^2$, this term is bounded by
$2C_\eta^2 K\rho_0^{-2} C'\,a\,\|\Phi(\tilde v)\|_\oplus^2$.

Combining the two estimates yields the result.
\end{proof}

% ============================================================
\medskip\noindent
\textbf{Local counting function estimate.}
Following the method of Cheng-Li \cite{ChengLi}, we prove the following:

\begin{lemma}[Local counting function]\label{lem:localCounting}
Let $\hat B=\hat B_j$ be one of the covering balls, with
$\hat B=B_g(q,r_0)$, $r_0$ as in \eqref{eq:radius}. Let $\tilde L_{D, \hat B}$ be the restriction of $\tilde L$ to $H^1_0(\hat B, dv, E)$. Let $\lambda_{D, j}$, $j = 1,2,...$, be the eigenvalues of the Dirichlet problem 
\[
\tilde L_{D, \hat B} \phi_j = \lambda_{D, j} \phi_j \ \text{on} \ \hat B, \quad \phi_j = 0 \ \text{on} \ \partial \hat B,
\]
where $\phi_j \in H^1_0(\hat B, dv, E)$ are the corresponding eigensections with $\int_{\hat B} |\phi|^2_j dv = 1$. 
Let $\mathcal N_{\tilde L_{D, \hat B}} (\lambda) = \# \{\lambda_{D, j} : \lambda_{D, j} \le \lambda \}$ be the corresponding counting function. Then, for any $k \geq 1$,
\[
\mathcal N_{\tilde L_{D, \hat B}}(ka ) \le C(n) k^{n/2}\rank(E).
\]
\end{lemma}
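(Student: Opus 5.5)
The strategy is the Cheng--Li heat-kernel argument: bound the Dirichlet heat trace on $\hat B$ by a Sobolev (Nash) estimate, then evaluate it at time $t\sim (ka)^{-1}$. The first step is to remove the potential. On $\hat B=B_g(q,r_0)$ with $q\in\Omega=\Omega_R$, Lemma~\ref{lem:f-control} gives $f\le R+2$. Hence $V_f=\tfrac14(f+\Sc-n)\ge -\tfrac n4$, and by \eqref{eq:Abound} we have $A\ge -C_A$. So for every $\phi\in H^1_0(\hat B,dv,E)$,
\[
\langle \tilde L\phi,\phi\rangle_{L^2(dv)}\ge \int_{\hat B}|\nabla\phi|^2\,dv-\bigl(\tfrac n4+C_A\bigr)\int_{\hat B}|\phi|^2\,dv .
\]
By min-max, $\mathcal N_{\tilde L_{D,\hat B}}(ka)\le \mathcal N_{-\Delta_{D,\hat B}}(ka+\tfrac n4+C_A)\le \mathcal N_{-\Delta_{D,\hat B}}(C(n)ka)$, since $a=2(\lambda+C_A+1)\ge 2$ and $k\ge 1$. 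This reduces the problem to the rough (Bochner) Laplacian with Dirichlet conditions on $\hat B$, acting on $E$-valued sections, in the unweighted measure $dv$.

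Next, pass to functions. For the Dirichlet heat semigroup of the connection Laplacian, Kato's inequality and the semigroup domination principle give $|e^{t\Delta_D}\phi|\le e^{t\Delta^{\mathrm{scalar}}_D}|\phi|$. Consequently the heat kernel trace on $E$ is bounded by $\rank(E)$ times the diagonal of the scalar Dirichlet heat kernel $H(x,x,t)$ on $\hat B$. Alternatively, one can run the Cheng--Li argument directly: for orthonormal eigensections $\phi_1,\dots,\phi_m$ with eigenvalues $\le\Lambda$, consider $\sum_j|\phi_j(x)|^2$. Its supremum is controlled by the mean-value inequality of Lemma~\ref{lem:meanvalue-sharp}, applied to the unweighted version via the Sobolev inequality \eqref{eq:localSobolev}. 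The local trace trick of Li then yields at most $\rank(E)$ times a scalar bound.

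The heat trace bound comes from Proposition~\ref{prop:Sobolev}. Since $r_0\le\rho_0/\sqrt{R}$ and $q\in\Omega_R$, inequality \eqref{eq:localSobolev} holds on $\hat B$ with constant $C_S r_0^2\Vol_g(\hat B)^{-2/n}$. The standard Nash/Moser argument for the Dirichlet heat kernel then gives
\[
H(x,x,t)\le C(n)\,\frac{r_0^n}{\Vol_g(\hat B)}\,t^{-n/2}\le C(n)\,t^{-n/2}
\]
for all $t>0$. The last step uses Lemma~\ref{lem:vollower}, $\Vol_g(\hat B)\ge v r_0^n$, though this introduces $e^{-\bm\mu}$. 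To keep the constant depending on $n$ only, I would instead keep the form $\frac{r_0^n}{\Vol_g(\hat B)}t^{-n/2}$ and integrate over $\hat B$. This gives
\[
\sum_j e^{-\lambda_{D,j}t}\le \rank(E)\int_{\hat B}H(x,x,t)\,dv\le C(n)\rank(E)\,r_0^n t^{-n/2}.
\]
Taking $t=(C(n)ka)^{-1}$ gives $\mathcal N_{-\Delta_{D,\hat B}}(C(n)ka)\le e\sum_j e^{-\lambda_{D,j}t}\le C(n)\rank(E)\,(r_0^2ka)^{n/2}$. Since $r_0^2=\rho_0^2/R=\rho_0^2/(C'a)$, the factor $a$ cancels and the bound is $C(n)k^{n/2}\rank(E)$.

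The main obstacle is the heat kernel step: deriving the on-diagonal upper bound with constant depending only on $n$ from the local Sobolev inequality, for all $t$, and with the volume normalization arranged so that the bound integrates to $r_0^n$ rather than involving $\Vol_g(\hat B)$ or the entropy. I would follow Cheng--Li's $L^1\to L^\infty$ Nash iteration, which uses only the Sobolev constant. I would also verify carefully the vector-bundle domination, which is needed to get exactly the factor $\rank(E)$.
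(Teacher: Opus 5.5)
Your proposal is correct and is essentially the paper's argument. Both use the Cheng--Li heat trace, Kato/Hess--Schrader--Uhlenbrock domination to extract the factor $\rank(E)$, and the local Sobolev inequality \eqref{eq:localSobolev} to get an on-diagonal bound $C(n)\frac{r_0^n}{\Vol_g(\hat B)}t^{-n/2}$ that integrates over $\hat B$ to $C(n)r_0^n t^{-n/2}$, followed by the choice $t\sim (ka)^{-1}$ and the identity $r_0^2a=\rho_0^2/C'$. The only difference is one of ordering: you first absorb $V_f\ge-\frac n4$ and $A\ge -C_A$ by the min-max shift $ka\mapsto ka+\frac n4+C_A\le C(n)ka$ and then work with the pure Dirichlet rough Laplacian, whereas the paper carries these lower bounds through its differential inequality for $\tr_E H(x,x,2t)$, which produces the harmless factor $e^{(n/2+2C_A)t}$; your ordering makes the domination step slightly cleaner.
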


\begin{proof}
Let $H(x, y ,t)$ be the symmetric Dirichlet heat kernel of the operator $\tilde L$ on $\hat B$, then
\[
H(x, y,t) = \sum_{j = 1}^\infty e^{-\lambda_{D, j} t} \phi_j(x) \otimes \phi^*_j(y),
\]
where $\phi_j^*$ is the dual section defined by $\phi_j^*(u) = \langle u, \phi_j\rangle$.
Define the trace 
\[
\tr_E H (x,x,t) = \sum_{j = 1}^\infty e^{-\lambda_j t} |\phi_j(x)|^2.
\]
Observe that $\int_{\hat B} \tr_E H(x, x, \tfrac{1}{\lambda}) dv \geq \mathcal N_{\tilde L_{D, \hat B} }(\lambda) e^{-1} $. Hence we only need to estimate the trace $\tr_E H(x, x, 2t)$ from above, then plug in $t = \tfrac{1}{2\lambda}$. 

By the semigroup property,
\[
\tr_E H(x, x, 2t) = \tr_E \int_{\hat B}  H(x, \cdot, t) H(\cdot, x, t ) dv =  \int_{\hat B} |H(x, \cdot, t)|^2 dv .
\]
Use H\"older inequality with $p = \tfrac{n+2}{n-2}$, we have 
\[
\int_{\hat B} |H(x, \cdot, t)|^2 dv \le  \left( \int_{\hat B} |H(x, \cdot, t)|^{\frac{2n}{n-2}} dv \right)^{\frac{n-2}{n+2} } \left( \int_{\hat B} |H(x, \cdot , t)| dv \right)^{\frac{4}{n+2}}.
\]
By Kato's inequality, we have the distributional inequality
$|u|\Delta |u| \ge \langle u, \Delta u\rangle$ for any smooth section $u$; together with the lower bound $\langle (V_f+A)u,u\rangle \ge (V_f-C_A)|u|^2$, this yields the requirement of the abstract domination theorem of Hess--Schrader--Uhlenbrock~\cite[Theorem~2.15]{HUS}.  Consequently the Dirichlet heat semigroup $e^{-t\tilde L_{D,\hat B}}$ on $\hat B$ is dominated by the scalar semigroup $e^{-t(-\Delta+V_f-C_A)_{D,\hat B}}$; i.e.
$|e^{-t\tilde L} u| \le e^{-t(-\Delta+V_f-C_A)}|u|$.  Passing to heat kernels
gives the pointwise operator-norm estimate $\|H(x,y,t)\|_{\mathrm{op}} \le p(x,y,t)$, where $p$ is the scalar Dirichlet heat kernel for $-\Delta + (V_f)_+ + C_A$.  Hence for the Hilbert--Schmidt norm we obtain
\[
|H(x,y,t)| \le \sqrt{\rank(E)}\, p(x,y,t).
\]
(One may also give an alternative proof using the differential inequality $\partial_t |H| \le \Delta|H| - (V_f)_+ |H| + C_A|H|$, obtained from Kato's
inequality, together with the maximum principle.)

Since
\[
\frac{d}{d t} \int_{\hat B} p(x, \cdot, t) dv \le \int_{\hat B} \Delta p(x, \cdot, t) dv + C_A \int_{\hat B} p(x, \cdot, t) dv,
\]
we get $\int_{\hat B} p(x, \cdot, t) dv \le e^{C_A t}$.
 Consequently, 
\[
\int_{\hat B} |H(x, \cdot, t)| dv \leq \sqrt{\rank(E)} e^{C_A t}.
\]
By the local Sobolev inequality Proposition \ref{prop:Sobolev} and Kato's inequality, 
\[
\left( \int_{\hat B} |H(x, \cdot, t)|^{\frac{2n}{n-2}} dv \right)^{\frac{n-2}{n} } \le \frac{C_S(n) r_0^2}{\Vol_g(\hat B)^{2/n}}\int_{\hat B} |\D H(x, \cdot, t)|^2 dv.
\]
Integration by parts yields
\[
\begin{split}
\int_{\hat B} |\D H|^2 dv = -\int_{\hat B}\langle H , \Delta H \rangle dv  =&  - \frac{1}{2} \frac{\pd}{\pd t}\int_{\hat B}| H|^2 dv - \int_{\hat B} \langle V_f H + A(H), H\rangle dv \\
\le &  - \frac{1}{2} \frac{\pd}{\pd t}\int_{\hat B}| H|^2 dv + ( n/4+C_A) \int_{\hat B} |H|^2 dv,
\end{split}
\]
where we used the lower bound $V_f \geq - \tfrac n 4$ and the assumption $\langle A(u), u\rangle \ge - C_A |u|^2$.

Denote $h = \tr_E H(x,x,2t) $. Combining the above inequalities yields
\[
\frac{\pd}{\pd t} h^{-\frac{2}{n}} \geq - \frac 4 n \left( \frac n 4 + C_A\right) h^{-\frac{2}{n}} + \frac{4}{n} \frac{\Vol_g(\hat B)}{C_S r_0^2} e^{-\frac{4}{n} C_A t} \rank(E)^{- \frac 2n}.
\]
Note that $\lim_{t \to 0} h^{-2/n} = 0$. Solving this differential inequality yields
\[
h \le \left( \frac n 4 C_S \right)^{\frac n 2} (e^t - 1)^{-\frac n 2} e^{ (n/2 + 2 C_A) t } \frac{r_0^n}{\Vol_g(\hat B)} \rank(E).
\]
Plug in $t = \tfrac{1}{2ka}$ and integrate on $\hat B$ yields 
\[
\mathcal N_{\tilde L_{D, \hat B}}(ka ) \leq e \int_{\hat B} h dv \leq C(n) (k a)^{n/2} r_0^n \rank(E).
\]
Since $r_0=\rho_0(n)/\sqrt{C'a}$, the estimate follows. 
\end{proof}

% ============================================================
\medskip\noindent
\textbf{Completion of the proof of counting function estimates.}

\begin{proof}[Proof of Theorem~\ref{thm:sharp}]
By Lemma~\ref{lem:isometry}, $\Phi(\widetilde E_\lambda)$ is an $\mathcal N_{\tilde L}(\lambda)$-dimensional subspace of the direct sum space
$\bigoplus_{j=1}^N H^1_0(\hat B_j,dv,E)$.
By Lemma~\ref{lem:energy}, every $w\in\Phi(\widetilde E_\lambda)$ satisfies
\[
\mathcal{E}_\oplus(w)\le C_E a\,\|w\|_\oplus^2.
\]
The min-max principle implies that if an $m$-dimensional subspace $W$ satisfies $\mathcal{E}_\oplus(w) \le K \, \|w\|_\oplus^2 $ for all $w \in W$, then the operator $\bigoplus_{j =1}^N \tilde L_{D, \hat B_j}$ has at least $m$ eigenvalues not exceeding $K$. Hence 
\[
\mathcal N_{\tilde L}(\lambda)=\dim\Phi(\widetilde E_\lambda)
\le\mathcal{N}_\oplus(C_E a),
\]
where $\mathcal{N}_\oplus(\lambda)$ denotes the number of eigenvalues of the direct sum operator $\bigoplus_{j=1}^N \tilde L_{D,\hat B_j}$ not
exceeding $\lambda$, and $\tilde L_{D, \hat B_j}$ denotes the restriction of the operator $\tilde L$ to sections on $\hat B_j$ with Dirichlet boundary condition. 
It is known (\cite[page 268]{ReedSimon}) that $\mathcal{N}_\oplus(\lambda) = \sum_{j = 1}^N \mathcal N_{\tilde L_{D, \hat B_j}}( \lambda)$.

By Lemma~\ref{lem:localCounting} (with $k=C_E$), each $\hat B_j$ contributes at most $C(n) \, C_E^{n/2}\rank(E)$ eigenvalues below
$C_E a$.  Hence
\[
\mathcal{N}_\oplus(C_E a)\le N\, C(n) \, C_E^{n/2}\rank(E).
\]

Using the covering number estimate~\eqref{eq:coverNumber},
\[
\mathcal N_{\tilde L}(\lambda)\le C(n)\,(1+\lambda+C_A)^\frac{n+\beta}{2}\, C_E^{n/2}\rank(E)
=C(n)\,  \rank(E)\,(1+\lambda+C_A)^{\frac{n+\beta}{2}}.
\]
This completes the proof.
\end{proof}

\medskip\noindent
\textbf{Proofs of the corollaries.} Now we prove the corollaries of the counting number estimates. 

\begin{proof}[Proof of Corollary \ref{cor:intro-lower}]
By Theorem~\ref{thm:intro-counting}, we have 
\[
k \le C(n) (1+\lambda_k(-\Delta_f))^{\frac{n+\beta}{2}} \le C(n)' \lambda_k(-\Delta_f)^{\frac{n+\beta}{2}},
\]
where we have used the fact that $\lambda_1(-\Delta_f) \geq \frac{1}{2}$ ~\cite{HeinNaber}. The result then follows. 
\end{proof}

To prove Corollary \ref{cor:intro-betti}, we first introduce some notations. Let $d\mu = e^{-f} dv$, and $\delta_f= \delta + \iota_{\D f}$ be the $L^2(d\mu)$-adjoint of the exterior differential operator $\dd$. The $f$-Hodge Laplacian operator is 
\[
\Delta^{\dd}_f = \dd \delta_f + \delta_f \dd.
\]
By the Weitzenb\"ock formula it can be written as 
\[
\Delta^{\dd}_f \omega = -\Delta \omega + \D_{\D f} \omega + \mathcal{R}_f(\omega). 
\]
See \cite[Lemma 2.2]{HeL2} for an explicit formula for $\mathcal{R}_f(\omega)$ on gradient Ricci shrinkers. A form $\omega$ is called $f$-harmonic if $\Delta_f^{\dd} \omega = 0$. Let $\mathcal{H}^p_f(M)$ be the space of $f$-harmonic $p$-forms on $M$ that are $L^2(d\mu)$-integrable. 

\begin{proof}[Proof of Corollary \ref{cor:intro-betti}]
By \cite[Theorem 1.4]{HeL2}, $\dim \mathcal{H}^p_f(M) = b_p(M)$ under the assumption that $|\Ric| \le \tfrac{1}{5n} f$ when $f$ is sufficiently large. Hence we have $b_p(M) \le \mathcal{N}_{\Delta^{\dd}_f}(0)$, then the estimate follows from Theorem~\ref{thm:intro-counting}.
\end{proof}

 %=====================================================
% Bibliography
% =====================================================
\bibliographystyle{plain}

%\vspace*{\fill}
\contactinfo

\end{document}